\renewcommand{\iff}{if and only if }
\newcommand{\st}{such that }
\newcommand{\Qbf}{\ensuremath{\mathsf{Q}}}
\newcommand{\Tbf}{\ensuremath{\mathsf{T}}}
\newcommand{\Sbf}{\ensuremath{\mathsf{S}}}
\newcommand{\Wbf}{\ensuremath{\mathbf{W}}}
\newcommand{\Lbf}{\ensuremath{\mathbf{L}}}
\newcommand{\tp}{\ensuremath{\mathsf{t}}}
\newcommand{\projl}{\ensuremath{\mathbb{P}^1_k}}
\newcommand{\Acal}{\ensuremath{\mathcal{A}}}
\newcommand{\A}{\ensuremath{\mathcal{A}}}
\newcommand{\C}{\ensuremath{\mathcal{C}}}
\newcommand{\Dcal}{\ensuremath{\mathcal{D}}}
\newcommand{\Fcal}{\ensuremath{\mathcal{F}}}
\newcommand{\G}{\ensuremath{\mathcal{G}}}
\newcommand{\Ical}{\ensuremath{\mathcal{I}}}
\newcommand{\Mcal}{\ensuremath{\mathcal{M}}}
\newcommand{\Ncal}{\ensuremath{\mathcal{N}}}
\newcommand{\Qcal}{\ensuremath{\mathcal{Q}}}
\newcommand{\Tcal}{\ensuremath{\mathcal{T}}}
\newcommand{\Ucal}{\ensuremath{\mathcal{U}}}
\newcommand{\Xcal}{\ensuremath{\mathcal{X}}}
\DeclareMathOperator{\add}{add}
\DeclareMathOperator{\Add}{Add}
\DeclareMathOperator{\Prod}{Prod}
\DeclareMathOperator{\Gen}{Gen}
\DeclareMathOperator{\Cogen}{Cogen}
\newcommand{\fp}{\mathrm{fp}}
\newcommand{\lmod}{\mathrm{-Mod}}
\newcommand{\Qcoh}{\mathrm{Qcoh}}
\newcommand{\derb}{\Dcal^b}
\newcommand{\MA}{\mbox{\rm Mod-$A$}}
\newcommand{\AM}{\mbox{\rm $A$-Mod}}
\newcommand{\mA}{\mbox{\rm mod-$A$}}
\newcommand{\Am}{\mbox{\rm $A$-mod}}
\newcommand{\lmla}{\mbox{\rm $\Lambda$-mod}}
\newcommand{\LMla}{\mbox{\rm $\Lambda$-Mod}}
\newcommand{\N}{\mathbb{N}}
\DeclareMathOperator{\Hom}{Hom}
\DeclareMathOperator{\End}{End}
\DeclareMathOperator{\Ext}{Ext}
\DeclareMathOperator{\Ker}{Ker}
\DeclareMathOperator{\Img}{Im}
\DeclareMathOperator{\Coker}{Coker}
\DeclareMathOperator{\dirlim}{\underrightarrow\lim}
\DeclareMathOperator{\Aspec}{ASpec}
\DeclareMathOperator{\Gdim}{Gdim}
\DeclareMathOperator{\Asupp}{ASupp}
\newcommand{\atom}[1]{\overline{#1}}
\newcommand{\Spec}[1]{\mathrm{Spec}(#1)}
\DeclareMathOperator{\pd}{pdim}
\newcommand{\p}{\ensuremath{\mathbf{p}}}
\newcommand{\q}{\ensuremath{\mathbf{q}}}
\newcommand{\tube}{\ensuremath{\mathbf{t}}}
\newcommand{\lperp}[1]{ {}^{\perp_1}{#1} }
\newcommand{\rperp}[1]{ {#1}^{\perp_1} }
\newcommand{\lcirc}[1]{ {}^{\perp_0}{#1} }
\newcommand{\rcirc}[1]{ {#1}^{\perp_0} }
\newcommand{\generatedset}[3][]{\mathopen{#1\langle}#3\mathclose{#1\rangle}_{\mathrm{#2}}}
\newcommand{\chertor}[2][]{{\generatedset[#1]{htor}{#2}}}
\newcommand{\lra}{\longrightarrow}
\theoremstyle{plain}
\newtheorem{thm}{Theorem}[section]
\newtheorem{prop}[thm]{Proposition}
\newtheorem{lemma}[thm]{Lemma}
\newtheorem*{thm*}{Theorem}
\newtheorem*{prop*}{Proposition}
\theoremstyle{definition}
\newtheorem{defn}[thm]{Definition}
\newtheorem{ex}[thm]{Example}
\newtheorem*{set*}{Setting}
\theoremstyle{remark}
\newtheorem{rem}[thm]{Remark}
\newcommand{\la}{\lambda}
\newcommand{\La}{\Lambda}
\newcommand\blfootnote[1]{%
	\begingroup
	\renewcommand\thefootnote{}\footnote{#1}%
	\addtocounter{footnote}{-1}%
	\endgroup
}
\newsavebox{\@brx}
\newcommand{\llangle}[1][]{\savebox{\@brx}{\(\m@th{#1\langle}\)}%
	\mathopen{\copy\@brx\kern-0.5\wd\@brx\usebox{\@brx}}}
\newcommand{\rrangle}[1][]{\savebox{\@brx}{\(\m@th{#1\rangle}\)}%
	\mathclose{\copy\@brx\kern-0.5\wd\@brx\usebox{\@brx}}}
\newcommand{\pocorner}[1][dr]{\save*!/#1+1.2pc/#1:(1,-1)@^{|-}\restore}
\newcommand{\pbcorner}[1][dr]{\save*!/#1-1.2pc/#1:(-1,1)@^{|-}\restore}
\begin{document}

\begin{center}
	{\bf ON COSILTING HEARTS OVER THE KRONECKER ALGEBRA}
	\bigskip
	
	{\sc Alessandro Rapa \\}
	\blfootnote{The author acknowledges partial support by Fondazione Cariverona, program ''Ricerca Scientifica di Eccellenza 2018'', project ''Reducing complexity in algebra, logic, combinatorics - REDCOM''}
		
\end{center}

\begin{abstract}
	This paper is about the hearts arising from torsion pairs of finite type in the category of modules over the Kronecker algebra. After a characterization of the simple objects in these hearts, we describe their atom spectrum and compute their Gabriel dimension.
\end{abstract}

\setlength{\abovedisplayskip}{5pt}
\setlength{\belowdisplayskip}{5pt}
\setlength{\abovedisplayshortskip}{5pt}
\setlength{\belowdisplayshortskip}{5pt}

\section{Introduction}

With the introduction of $\tau$-tilting theory for finite dimensional algebras by Adachi, Iyama and Reiten, in \cite{AIR}, and the proof of the bijection between support $\tau$-tilting modules and functorially finite torsion classes, a renewed focus has been estabilished on the study of torsion pairs in the category of finite dimensional modules over a finite dimensional algebra. After that, in \cite{DIRRT}, the authors looked at the bigger picture by studying the collection of all the torsion pairs, not only the functorially finite ones, in the category of finite dimensional modules over a finite dimensional algebra, exploring its lattice theoretical properties.

For a finite dimensional algebra $\La$, the torsion pairs in the category $\lmla$ of finite dimensional $\La$-modules are in bijection with the torsion pairs of finite type in the category $\LMla$ of all $\La$-modules (see \cite{CB1}). A torsion pair is of \emph{finite type} if the torsionfree class is closed under direct limits. In this paper we direct our attention on the torsion pairs of finite type of $\LMla$ and on the hearts of the t-structures arising from them, describing properties that comes from the characterization of their simple objects.

Being closed under direct limits, the torsionfree class of a torsion pair of finite type is clearly a \emph{definable} class, meaning that it is closed under direct products, direct limits and pure submodules. This leads to one of the fundamental results stating that definable torsionfree classes, and so, torsion pairs of finite type, bijectively corresponds to cosilting classes, and so, torsion pairs cogenerated by a cosilting $\La$-module (cf. \cite{BZ} and \cite{ZW}).

Buan and Krause \cite{BK} gave a classification of all the cotilting modules over a tame hereditary artin algebra and, moreover, a complete classification of all the silting modules over the Kronecker algebra has been given in \cite{AMV1}. With this information, we can list all the cosilting $\La$-modules and so all the torsion pairs of finite type in $\LMla$. There are basically two types of torsion pairs of finite type in $\LMla$: the ones that are cogenerated by a finite dimensional cosilting $\La$-module, which are in bijection with the functorially finite torsion pairs in $\lmla$, and the ones that are cogenerated by an infinite dimensional cosilting $\La$-module, which correspond to the torsion pairs generated either by a nonempty subset of the tubular family $\tube$ or by the preinjective component $\q$.

The construction of a t-structure arising from a torsion pair goes back to Happel, Reiten and Smal\o\, \cite{HRS}. It is well known that the heart of a t-structure is always an abelian category and, in our specific case, since the considered torsion pairs are cogenerated by cosilting modules (equivalently, they are of finite type), the heart is always a Grothendieck category (see \cite{PS}).

First of all, we give a complete description of the simple objects in these hearts and their injective envelopes. In order to do so we use the notion of \emph{torsionfree, almost torsion} object defined in \cite{AHL-ong} (together with the dual concept of \emph{torsion, almost torsionfree} object) and introduced also in \cite{BCZ} under the name of \emph{minimal extending modules}. Torsionfree, almost torsion objects, with respect to a given torsion pair, are torsionfree objects whose proper quotients are torsion (torsion, almost torsionfree objects are defined dually). In a sense, one can think about torsionfree, almost torsion objects as objects very close to the ''border'' of the torsion pair.

After this, we use the description of the simples to compute the \emph{atom spectrum} of the different hearts. The atom spectrum has been introduced by Kanda, in \cite{Kan12}, for a general abelian category and it is a generalization of the prime spectrum for commutative rings. Accordingly, it has a structure of topological space and, for a Grothendieck category, it is strongly related to the spectrum of the indecomposable injective objects. The elements of the atom spectrum are called \emph{atoms} and they are built as equivalence classes of \emph{monoform} objects, which are objects $X$ of the category such that, for any subobject $Y$, there are no common nonzero subobjects between $X$ and $X/Y$. Simple objects are indeed the first example of monoform objects. For a monoform object $X$, the corresponding atom is denoted by $\atom{X}$. We obtain the following:

\begin{thm*}[Theorem \ref{thm:AspecGdimHearts}]
	Let $\La$ be the Kronecker algebra (over a field $k$). Denote by $P_1$ and $Q_1$ the simple projective and the simple injective $\La$-modules, respectively. For a torsion pair of finite type $\tp$ in $\LMla$, we denote by $\A$ the heart of the t-structure arising from $\tp$.
	
	If $\tp$ is cogenerated by a finite dimensional cosilting $\La$-module, we get: 
	
	\begin{itemize}[noitemsep]
		\item $\Aspec(\A) \cong \Aspec(k \lmod) = \{\atom{k}\}$, for the torsion pair cogenerated by $Q_1$.
		\item $\Aspec(\A) \cong \Aspec(\LMla) = \{\atom{P_1}, \atom{Q_1} \}$, for all the other torsion pairs.
	\end{itemize}

	Let $\tube$ be the family of homogeneous tubes in $\LMla$, indexed by the projective line $\projl$. Consider $\Ucal \subseteq \projl$ and denote by $G$ the generic $\La$-module. If $\tp$ is generated by $\q$ or by a nonempty subset of $\tube$ indexed by $\Ucal \subset \projl$, then:
		\begin{align*}
			\Aspec(\A) = \atom{G[1]} &\cup \{ \atom{S} \mid S \textrm{ simple regular in a tube indexed by } \Ucal \}\,\cup \\
			&\cup \{ \atom{S[1]} \mid S \textrm{ simple regular in a tube indexed by } \projl \setminus \Ucal \}.
		\end{align*} 
\end{thm*}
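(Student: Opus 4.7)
The plan is to combine the characterization of the simple objects in each heart $\A$ (already obtained earlier in the paper) with the general principle that, in a Grothendieck category, the atom spectrum is in bijection with the isomorphism classes of indecomposable injective objects, the bijection sending $\atom{X}$ to the injective envelope $E(X)$ of any monoform representative. Since every simple is monoform and the injective envelopes of the simples of $\A$ have been described earlier in the paper, the bulk of the atom spectrum is already accessible; what remains is to decide which indecomposable injectives arise as envelopes of simples and which require a non-simple monoform representative.

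For torsion pairs cogenerated by a finite-dimensional cosilting module, the heart $\A$ is equivalent, via the Happel--Reiten--Smal\o\ construction, to the module category of a finite-dimensional endomorphism algebra. In any such module category, the indecomposable injectives are exactly the injective envelopes of the simples, so the atom spectrum is in bijection with the isomorphism classes of simples of $\A$. The earlier classification of simples then yields a unique simple (giving the atom $\atom{k}$) in the case of the torsion pair cogenerated by $Q_1$, and exactly two non-isomorphic simples identified with $\atom{P_1}$ and $\atom{Q_1}$ in every other finite-dimensional case.

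The infinite-dimensional case is where the statement is genuinely new. The simples $S$ in tubes indexed by $\Ucal$ and the shifts $S[1]$ of simples in tubes indexed by $\projl \setminus \Ucal$ directly produce the atoms $\atom{S}$ and $\atom{S[1]}$ appearing in the statement. For the extra atom $\atom{G[1]}$, I would show that $G[1]$ is a non-simple monoform object of $\A$: using that $\End_\La(G)$ is a skew field (the function field of $\projl$) and describing subobjects and quotients of $G[1]$ in $\A$ through triangles in $D^b(\LMla)$, one verifies directly that $G[1]$ has no common nonzero subobject with any of its proper quotients.

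The main obstacle is completeness in the infinite-dimensional case: ruling out further atoms. For this I would invoke the classification of injective envelopes of the simples of $\A$ (already carried out earlier in the paper), exhibit an indecomposable injective object of $\A$ having $G[1]$ as a monoform subobject, and argue that together they account for every indecomposable injective of $\A$. The concrete verification transports the classification of indecomposable pure-injective $\La$-modules (due to Ringel and Prest for tame hereditary algebras) through the cosilting-induced equivalence and checks, atom-by-atom, that only the listed candidates occur.
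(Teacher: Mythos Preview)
Your overall strategy is close to the paper's, but there is one genuine misstep and one point where your sketch diverges from the paper's actual argument.

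\medskip

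\textbf{The bijection is not a general principle.} You invoke ``the general principle that, in a Grothendieck category, the atom spectrum is in bijection with the isomorphism classes of indecomposable injective objects.'' This is false as stated: Kanda's map $\Aspec(\G)\to\Spec{\G}$, $\atom{X}\mapsto E(X)$, is only an \emph{injection} for a general Grothendieck category, and the paper records precisely this (Theorem~\ref{thm:lnAspec}). The bijection is known a priori only for locally noetherian $\G$, and for $\varnothing\neq\Ucal\subseteq\projl$ the heart $\A_\Ucal$ is explicitly \emph{not} locally noetherian (the paper notes this, via non-$\Sigma$-pure-injectivity of $C_\Ucal$). The paper therefore does not assume the bijection; instead it first identifies $\Spec{\A_\Ucal}$ with the indecomposable summands of $\Prod(C_\Ucal[1])$ using \cite[Proposition~4.4]{CS}, then exhibits a monoform object with each of these as injective envelope, so that the injection $\Aspec(\A_\Ucal)\hookrightarrow\Spec{\A_\Ucal}$ is forced to be surjective. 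Your final paragraph gestures towards the same logic, so your argument can be repaired, but the ``general principle'' you state must be replaced by this two-step manoeuvre.

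\medskip

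\textbf{Monoformity of $G[1]$.} You propose to verify directly, from $\End_\La(G)$ being a skew field and an analysis of triangles, that $G[1]$ shares no nonzero subobject with any of its proper quotients. The paper does something more structured (Proposition~\ref{GmonoformAp}): it uses the hereditary torsion pair $(\Tcal_{\bar\Ucal},\Fcal_{\bar\Ucal})$ in $\A_\Ucal$ coming from \cite[Corollary~5.8]{AK}, with $\Tcal_{\bar\Ucal}=\coprod_{x\in\bar\Ucal}\Tcal(x)[1]$, and shows that every proper quotient of $G[1]$ lies in $\Tcal_{\bar\Ucal}$ while $G[1]\in\Fcal_{\bar\Ucal}$. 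This makes $G[1]$ cocritical for that torsion pair, hence monoform by Proposition~\ref{prop:monoformcocritical}. The concrete work is an analysis of $\Img h$ for $h\colon G\to C$ in $\LMla$ using the Reiten--Ringel structure theory of $\lcirc{\tube}$ and $\C_\Ucal\cap\Dcal$; the skew-field property of $\End_\La(G)$ enters only to rule out a copy of $G$ as a summand of $\Img h$. Your direct route would in the end have to reproduce essentially this computation, but packaging it as cocriticality for a known hereditary torsion pair is what makes the argument clean and is also what feeds into the Gabriel-filtration computations later in the paper.
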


In the last part of the paper we give a direct computation of the \emph{Gabriel dimension} of the different hearts. The notion of Gabriel dimension has been introduced by Gabriel, in \cite{Ga}, under the name of Krull dimension, as a way to understand the complexity of a Grothendieck category using an iterated localization procedure. Whenever the Grothendieck category is a Gabriel category, ie. a category with finite Gabriel dimension, the atom spectrum is in bijective correspondence with the spectrum of the indecomposable injective objects (see \cite{VamVir}). Our result is that all the hearts arising from the torsion pairs of finite type are Gabriel categories, giving an a posteriori evidence of the fact that the atom spectrum of every heart bijectively corresponds to the spectrum of the indecomposable injective objects in that heart.

\section{Preliminaries}

\subsection{Torsion pairs and t-structures}
Let $\G$ be a Grothendieck category. Let $\Mcal$ be a class of objects in $\G$ and let $X \in \G$. We say that:
\begin{itemize}[noitemsep]
	\item $X$ is \emph{generated by} $\Mcal$, if $X$ is a quotient object of coproducts of objects in $\Mcal$.
	\item $X$ is \emph{cogenerated by} $\Mcal$, if $X$ is a subobject of products of objects in $\Mcal$.
\end{itemize}
We denote by:
\begin{itemize}[noitemsep]
	\item $\Gen\Mcal$: the class of all objects in $\G$ generated by $\Mcal$.
	\item $\Cogen\Mcal$: the class of all objects in $\G$ cogenerated by $\Mcal$.
	\item $\Add\Mcal$ ($\add\Mcal$): the class of objects in $\G$ isomorphic to a direct summand of a (finite) direct sum of objects in $\Mcal$.
	\item $\Prod\Mcal$: the class of objects in $\G$ isomorphic to a direct summand of a direct product of objects in $\Mcal$.
\end{itemize}
If $\Mcal = \{M\}$ for $M \in \G$, we write $\Gen M$, $\Cogen M$, $\Add M$ and $\Prod M$. All these classes are full subcategories of $\G$. We say that:
\begin{itemize}[noitemsep]
	\item $\Mcal$ \emph{is generating} for $\G$ if $\G = \Gen\Mcal$.
	\item $\Mcal$ \emph{is cogenerating} for $\G$ if $\G = \Cogen\Mcal$.
\end{itemize}

\begin{defn}
	A \emph{torsion pair} is a pair $\tp = (\Tcal,\Fcal)$, where $\Tcal$ and $\Fcal$ are two full subcategories of $\G$, such that:
	\begin{enumerate}[noitemsep, label=(\arabic*)]
		\item $\Hom_\G(\Tcal,\Fcal) = 0$.
		\item For any $X \in \G$, there is a short exact sequence $ 0 \lra T \lra X \lra F \lra 0 $, where $T \in \Tcal$ and $F \in \Fcal$.
	\end{enumerate}
	We call $\Tcal$ (resp. $\Fcal$) the \emph{torsion class} (resp. the \emph{torsionfree class}). We say that a torsion pair $\tp$ is:
	\begin{itemize}[noitemsep]
		\item \emph{split}: if every short exact sequence $0 \to T \to X \to F \to 0$, with $T \in \Tcal$ and $F \in \Fcal$, splits.
		\item \emph{hereditary}: if the torsion class $\Tcal$ is closed under subobjects.
		\item \emph{of finite type}: if the torsionfree class $\Fcal$ is closed under direct limits.
	\end{itemize}
\end{defn}

Given a class of objects $\Mcal \subset \G$, we set $\rcirc{\Mcal} = \Ker\Hom_\G(\Mcal,-)$ and $\rperp{\Mcal} = \Ker\Ext^1_\G(\Mcal,-)$. Dually, we define the classes $\lcirc{\Mcal}$ and $\lperp{\Mcal}$. If $\Mcal = \{M\}$ for $M \in \Mcal$, we write $\rcirc{M}$, $\rperp{M}$, $\lcirc{M}$ and $\lperp{M}$.

\begin{rem}\label{rem:torsclass}
	Fix a torsion pair $\tp = (\Tcal,\Fcal)$, it follows from the definition that $\Fcal = \rcirc{\Tcal}$ and $\Tcal = \lcirc{\Fcal}$. In particular, $\Tcal$ is closed under extensions, quotient objects and all coproducts that exist in $\G$ and, dually, $\Fcal$ is closed under extensions, subobjects and products.
\end{rem}

Let $\Mcal$ be a class of objects in $\G$ and $\tp = (\Tcal,\Fcal)$ a torsion pair in $\G$. We have that:
\begin{itemize}[noitemsep]
	\item $\tp$ \emph{is generated} by $\Mcal$ if $\Fcal = \rcirc{\Mcal}$ (and $\Tcal = \lcirc{(\rcirc{\Mcal})}$).
	\item $\tp$ \emph{is cogenerated} by $\Mcal$ if $\Tcal = \lcirc{\Mcal}$ (and $\Fcal = \rcirc{(\lcirc{\Mcal})}$).
\end{itemize}

If $\G$ is a locally noetherian Grothendieck category, denote by $\G_0 = \fp(\G)$ the full subcategory of finitely presented objects. We have the following:

\begin{thm}\cite[\textsection 4.4]{CB1}\cite[Lemma 3.11]{CS}
	\label{lemma:tpfinitetype_tpinG0}
	Let $\G$ be a locally noetherian Grothendieck category. There is a bijective correspondence between as follows:
	\[
	\xymatrix@R=1pt{
		\left\{\vcenter{\txt{torsion pairs of finite type in $\G$}}\right\} \ar@{<->}[r]& \left\{\vcenter{\txt{torsion pairs in $\G_0$}}\right\} \\
		(\Tcal,\Fcal) \ar@{|->}[r] & (\Tcal \cap \G_0, \Fcal \cap \G_0) \\
		(\dirlim \Tcal_0,\dirlim \Fcal_0) & (\Tcal_0,\Fcal_0) \ar@{|->}[l]
	}
	\]
	Moreover, $(\dirlim \Tcal_0,\dirlim \Fcal_0)$ coincides with the torsion pair $(\Gen\Tcal_0,\rcirc{\Tcal_0})$ generated by $\Tcal_0$.
\end{thm}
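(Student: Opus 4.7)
The plan is to exhibit the two maps in the statement explicitly and then verify they are mutually inverse. Write $\Phi(\Tcal,\Fcal) := (\Tcal\cap\G_0,\Fcal\cap\G_0)$ and $\Psi(\Tcal_0,\Fcal_0) := (\Gen\Tcal_0,\rcirc{\Tcal_0})$; the ``moreover'' clause will fall out of the construction of $\Psi$. The locally noetherian hypothesis enters through the identification of $\G_0$ with the class of noetherian objects of $\G$, which is in particular closed under subobjects and quotients in $\G$; consequently every $X \in \G$ is the directed union $X = \dirlim X_\alpha$ of its finitely generated subobjects, each $X_\alpha \in \G_0$.

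To see $\Phi$ is well defined, given a finite-type torsion pair $(\Tcal,\Fcal)$ and any $X \in \G_0$, the canonical sequence $0 \to t(X) \to X \to X/t(X) \to 0$ has both end terms noetherian, hence in $\G_0$, and orthogonality carries over trivially. For $\Psi$ I must show simultaneously that $(\Gen\Tcal_0,\rcirc{\Tcal_0})$ is a torsion pair and that $\rcirc{\Tcal_0} = \dirlim\Fcal_0$. Given $X = \dirlim X_\alpha$ with $X_\alpha \in \G_0$, apply the $\G_0$-torsion decomposition componentwise to produce $0 \to t_0(X_\alpha) \to X_\alpha \to f_0(X_\alpha) \to 0$. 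Since $\Hom(\Tcal_0,\Fcal_0)=0$ and $\Tcal_0$ is closed under quotients in $\G_0$, a transition map $X_\alpha \to X_\beta$ carries $t_0(X_\alpha)$ into $t_0(X_\beta)$ (its image is a quotient of an object of $\Tcal_0$ sitting inside $X_\beta\in\G_0$, hence contained in the maximal such subobject), and therefore descends to a map $f_0(X_\alpha) \to f_0(X_\beta)$. Exactness of filtered colimits in the Grothendieck category $\G$ then yields
\[
0 \lra \dirlim t_0(X_\alpha) \lra X \lra \dirlim f_0(X_\alpha) \lra 0,
\]
with the left term in $\Gen\Tcal_0$ (as a quotient of $\bigoplus t_0(X_\alpha)$) and the right term in $\rcirc{\Tcal_0}$ because $\Hom(T,-)$ commutes with filtered colimits for $T \in \Tcal_0 \subset \G_0$. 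This provides the torsion decomposition of $X$ and identifies $\rcirc{\Tcal_0} = \dirlim\Fcal_0$ in one stroke.

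It remains to check $\Phi\Psi = \mathrm{id}$ and $\Psi\Phi = \mathrm{id}$. For $\Phi\Psi$: any $X \in \Gen\Tcal_0 \cap \G_0$ has a $\G_0$-torsion decomposition whose torsionfree quotient lies in $\Fcal_0 \subseteq \rcirc{\Tcal_0}$; since $X \in \Gen\Tcal_0$ surjects onto that quotient, the quotient vanishes and $X \in \Tcal_0$, while the symmetric argument gives $\rcirc{\Tcal_0}\cap\G_0 = \Fcal_0$. For $\Psi\Phi$: the identity $\Fcal = \dirlim(\Fcal\cap\G_0)$ is immediate, since $\Fcal$ is closed under direct limits and any $F\in\Fcal$ is the directed union of its fg subobjects (which, being subobjects of $F$, lie in $\Fcal\cap\G_0$). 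For $T \in \Tcal$, writing $T = \dirlim T_\alpha$ with $T_\alpha$ fg and running the directed-union construction above, the quotient $\dirlim f_0(T_\alpha)$ sits in $\Fcal$ (by the finite-type property) and is also a quotient of $T \in \Tcal$, so it vanishes and $T = \dirlim t_0(T_\alpha) \in \dirlim(\Tcal\cap\G_0)$. The main delicate point throughout is ensuring that the torsion and torsionfree pieces of the $X_\alpha$ assemble into compatible direct systems so that their colimit sequence remains a short exact sequence realising the torsion decomposition of $X$ in $\G$; this is where both the orthogonality inside $\G_0$ and the exactness of filtered colimits are indispensable.
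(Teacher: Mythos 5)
Your proof is correct and self-contained, following the standard approach: filter $X$ by its noetherian subobjects, apply the torsion decomposition in $\G_0$ componentwise (using closure of $\G_0$ under subobjects and quotients to define compatible transition maps), and pass to the colimit via AB5 together with the fact that $\Hom_\G(T,-)$ commutes with filtered colimits for $T\in\G_0$. The paper does not supply its own proof of this statement, simply attributing it to \cite{CB1} and \cite{CS}, and your argument matches what one finds there.
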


Consider now a triangulated category $\Dcal$, with shift functor $[1]$.
\begin{defn}
	A pair of full subcategories of $\Dcal$, $(\Dcal^{\leq 0}, \Dcal^{\geq 0})$, is called a \emph{t-structure} if it satisfies the properties below. We use the following notation: $\Dcal^{\leq n} = \Dcal^{\leq 0}[-n]$ and $\Dcal^{\geq n} = \Dcal^{\geq 0}[-n]$.
	\begin{enumerate}[noitemsep, label=(\arabic*)]
		\item $\Hom_\Dcal(\Dcal^{\leq 0},\Dcal^{\geq 1})=0$,
		\item $\Dcal^{\leq 0} \subseteq \Dcal^{\leq 1}$ (and $\Dcal^{\geq 0} \supseteq \Dcal^{\geq 1}$),
		\item For every object $X \in \Dcal$, there is a triangle $A \lra X \lra B \lra A[1]$, with $A \in \Dcal^{\leq 0}$ and $B \in \Dcal^{\geq 1}$.
	\end{enumerate}
\end{defn}
For a t-structure $(\Dcal^{\leq 0}, \Dcal^{\geq 0})$, the full subcategory defined as:
\[ \Acal = \Dcal^{\leq 0} \cap \Dcal^{\geq 0} \]
is called the \emph{heart} of the t-structure. $\Acal$ is always an abelian category and its abelian structure comes from the triangulated structure of $\Dcal$ (ie. a short exact sequence $0 \to X \to Y \to Z \to 0$ in $\Acal$ is exact if and only if there is a triangle $X \to Y \to Z \to X[1]$ in $\Dcal$ with $X,Y,Z \in \Acal$).

\begin{ex}\label{ex:HRSheart}
	Consider a Grothendieck category $\G$ and a torsion pair $(\Qcal,\C)$ in it. The full subcategories of $\derb(\G)$:
	\begin{gather*}
	\Dcal^{\le 0}=\{X\in\derb(\G)\mid H^0(X)\in\Qcal, H^i(X)=0 \text{ for } i>0\}, \\
	\Dcal^{\ge 0}=\{X\in\derb(\G)\mid H^{-1}(X)\in\C, H^i(X)=0 \text{ for } i<-1\} 
	\end{gather*}
	form a t-structure $(\Dcal^{\le 0},\Dcal^{\ge 0})$, called the \emph{HRS-t-structure}, see \cite[Proposition 2.1]{HRS}. Its heart is the category:
	\[ \A = \Dcal^{\le 0} \cap \Dcal^{\ge 0} = \{ X \in\derb(\G) \mid H^0(X)\in\Qcal, H^{-1}(X)\in\C, H^i(X)=0 \text{ for } i\neq -1,0 \} \]
	and it is called the \emph{HRS-heart}. In the sequel, we will denote by $\A = \G(\Qcal,\C)$ the HRS-heart of the HRS-t-structure arising from the torsion pair $(\Qcal,\C)$ on the category $\G$.
\end{ex}

For any two objects $X,Z\in\A$ there are functorial isomorphisms $\Ext^i_\A(X,Z)\cong\Hom_{\derb(\A)}(X,Z[i])$, for $i=0,1$. Moreover, the pair $(\C[1],\Qcal)$ is a torsion pair in $\Acal = \G(\Qcal,\C)$, see \cite[Corollary I.2.2(b)]{HRS}.

We recall some conditions on the torsion pair in $\G$ affecting the HRS-heart $\A = \G(\Qcal,\C)$.

\begin{thm}
	\begin{enumerate}[noitemsep,label=\rm(\roman*)]
		\item \cite[Theorem 5.2]{KST} $\A$ is hereditary if and only if the torsion pair $(\Qcal,\C)$ is split and $\pd_{\A} Q \leq 1$, for any $Q \in \Qcal$.
		\item \cite[Corollary 4.10]{PS} Suppose that either $\C$ is generating or $\Qcal$ is cogenerating for $\G$. Then $\A$ is a Grothendieck category if and only if $(\Qcal,\C)$ is of finite type in $\G$.
		\item \cite[Theorem 5.2]{Sao} If $\G$ is locally noetherian, then $\A$ is a locally coherent Grothendieck category if and only if $(\Qcal,\C)$ is of finite type.
	\end{enumerate}
	
\end{thm}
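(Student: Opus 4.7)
This theorem compiles three characterizations from the literature; accordingly, the plan for each part is to outline the main ideas that drive it. The common framework is the HRS-heart $\A = \G(\Qcal, \C)$ sitting inside $\derb(\G)$, the tilted torsion pair $(\C[1], \Qcal)$ in $\A$, and the identification $\Ext^i_\A(X, Z) \cong \Hom_{\derb(\G)}(X, Z[i])$ for $i \in \{0,1\}$ recalled just above.

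For (i), I would characterize heredity by checking $\Ext^2_\A = 0$, via a case analysis on the tilted torsion pair: every object of $\A$ is an extension of some $Q \in \Qcal$ by some $C[1] \in \C[1]$, so the long exact sequence of Ext's reduces the problem to the vanishing of $\Ext^2_\A$ on pairs from $\Qcal \cup \C[1]$. The hypothesis $\pd_\A Q \leq 1$ directly kills $\Ext^2_\A(Q, -)$, handling two of the four corners. For the remaining corners, I would use the canonical triangles defining objects of $\A$ inside $\derb(\G)$ to compute the relevant Ext-boundary maps and show, after tracking $2$-Yoneda extensions, that their vanishing is equivalent to $\Ext^1_\G(\C, \Qcal) = 0$, i.e.\ to the splitting of $(\Qcal, \C)$. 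The converse is a direct assembly of the four corner vanishings back to $\Ext^2_\A(X,Y)=0$ for arbitrary $X,Y \in \A$.

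For (ii), the forward direction uses the generating/cogenerating hypothesis to transport direct limits between $\A$ and $\G$: a direct system in $\C$ has the same direct limit whether computed in $\A$ (which is Grothendieck by assumption) or in $\G$, so $\C$ is closed under direct limits in $\G$ and $(\Qcal, \C)$ is of finite type. For the converse one verifies the Grothendieck axioms on $\A$ by hand: coproducts exist from the triangulated structure, direct limits are constructed through the cohomology functors $H^0$ and $H^{-1}$ of the t-structure using that both $\Qcal$ and $\C$ are closed under direct limits in $\G$ under the finite-type hypothesis, and AB5 follows from the exactness of direct limits in $\G$ via a diagram chase on the canonical torsion sequence. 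A generator of $\A$ is assembled from a generator of $\G$ together with the torsion pair data, with the generating/cogenerating hypothesis ensuring it really generates.

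For (iii), the locally noetherian hypothesis combined with Theorem~\ref{lemma:tpfinitetype_tpinG0} supplies a torsion pair $(\Tcal_0, \Fcal_0) = (\Qcal \cap \fp(\G), \C \cap \fp(\G))$ on $\fp(\G)$ whose direct-limit closure recovers $(\Qcal, \C)$. The strategy is to identify $\fp(\A)$ with the subcategory of $\A$ consisting of complexes whose two nontrivial cohomologies lie in $\Tcal_0$ and $\Fcal_0$, and to check that this subcategory is abelian. The main obstacle, and the point where the noetherian hypothesis is crucial, is closure under kernels: a kernel in $\A$ sees both cohomology functors, and one must show that the resulting cohomologies remain finitely generated inside $\Tcal_0$ and $\Fcal_0$. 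Once $\fp(\A)$ is established as an abelian subcategory generating $\A$ under direct limits, local coherence follows combined with part (ii), which provides the Grothendieck structure.
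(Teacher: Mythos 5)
The paper does not prove this theorem: it is a compilation of three results quoted verbatim from the cited references \cite{KST}, \cite{PS}, \cite{Sao}, and nothing more is said about them. So there is no in-paper proof to compare against; what you have written is a from-scratch reconstruction of the external proofs, and it should be judged as such.

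Your sketches for (i) and (iii) are broadly in the spirit of the cited arguments, though each has a soft spot. In (i), reducing $\Ext^2_\A$ to the four ``corners'' of the tilted torsion pair $(\C[1],\Qcal)$ is the right reduction, and your care to work with Yoneda $\Ext^2$ rather than $\Hom_{\derb(\G)}(-,-[2])$ is important since the HRS identification $\Ext^i_\A(X,Z)\cong\Hom_{\derb(\G)}(X,Z[i])$ only holds for $i=0,1$. But the ``tracking of $2$-Yoneda extensions'' is where the actual work of \cite[Theorem 5.2]{KST} lives and your sketch asserts the equivalence with splitting rather than deriving it; in particular you should notice that the splitting condition and the condition $\pd_\A Q\leq 1$ are not independent corners but interact, since heredity of $\A$ already forces $\pd_\A Q\leq 1$ for free and the real content of the forward direction is extracting the splitting. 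In (iii), the description of $\fp(\A)$ as complexes whose cohomologies land in $\Tcal_0$ and $\Fcal_0$ is a plausible guess but is not automatic: finite presentation in the heart is not detected cohomology-by-cohomology in general, and establishing this identification is essentially the whole theorem in \cite{Sao}, so as written this is a restatement rather than an argument.

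The genuine gap is in (ii). You write that ``coproducts exist from the triangulated structure,'' but $\A$ sits inside the bounded derived category $\derb(\G)$, which does not admit arbitrary set-indexed coproducts. Coproducts in the HRS-heart must be built by hand: one either passes to the unbounded derived category $\Dcal(\G)$, or constructs $\coprod X_i$ directly by observing that $H^{-1}(\coprod X_i)=\coprod H^{-1}(X_i)$ needs to lie in $\C$ and $H^0(\coprod X_i)=\coprod H^0(X_i)$ lies in $\Qcal$, which is precisely where the finite-type hypothesis (closure of $\C$ under direct limits, hence under coproducts) enters. As stated, your argument would have the coproducts for free and only use finite type for AB5, which inverts the logical dependency and would give a ``proof'' that works without finite type. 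The construction of a generator is likewise where the generating/cogenerating hypothesis is actually spent, and ``assembled from a generator of $\G$'' needs to become the explicit object built in \cite[\S 4]{PS}; without it the implication ``finite type $\Rightarrow$ Grothendieck'' does not close.
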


\subsection{Silting and cosilting modules}

In this section we recall, mainly from \cite{AMV} and \cite{BP}, the notions of (co)silting module and (co)silting torsion pair.

Let $A$ be a ring. We denote by $\MA$ (resp. $\AM$) the category of right (resp. left) $A$-modules and by $\mA$ (resp. $\Am$) the category of finitely generated right (resp. left) $A$-modules.

Let $\sigma \colon P \lra Q$ be a map between two projective $A$-modules and consider:
\[  \Dcal_\sigma = \{ X \in \MA \mid \Hom_A(\sigma,X) \text{ is surjective} \} 
\]

\begin{defn}
	An $A$-module $T$ is called \emph{silting} if it admits a projective presentation $P \overset{\sigma}{\lra} Q \to T \to 0$ such that $\Gen T = \Dcal_\sigma$. In this case, the torsion class $\Gen T$ is called \emph{silting class}. $T$ is called \emph{tilting} if $\rperp{T} = \Gen T$.
\end{defn}

Two silting modules $T$ and $T'$ are \emph{equivalent} if they generate the same silting class or, equivalently, if $\Add T = \Add T'$. An $A$-module $T$ is tilting if and only if $T$ is silting with respect a projective presentation $0 \to P \overset{\sigma}{\lra} Q \to T \to 0$, with $\sigma$ injective (see \cite[Proposition 3.12]{AMV}).

\begin{rem}\cite[Remark 3.11]{AMV}
	If $T$ is a silting module, then $(\Gen T, \rcirc{T})$ is a torsion pair.
\end{rem}

In a dual fashion, let $\omega \colon E \lra F$ be a map between two injective $A$-modules and consider:
\[  \C_\omega = \{ X \in \MA \mid \Hom_A(X,\omega) \text{ is surjective} \} 
\]

\begin{defn}
	An $A$-module $C$ is called \emph{cosilting} if it admits an injective copresentation $0 \to C \to E_0 \overset{\omega}{\lra} E_1$ such that $\Cogen C = \C_\omega$. In this case, the torsionfree class $\Cogen C$ is called \emph{cosilting class}. $C$ is called \emph{cotilting} if $\lperp{C} = \Cogen C$.
\end{defn}

Two cosilting modules $C$ and $C'$ are \emph{equivalent} if they generate the same cosilting class or, equivalently, if $\Prod C = \Prod C'$. An $A$-module $C$ is cotilting if and only if $C$ is cosilting with respect an injective copresentation $0 \to C \to E_0 \overset{\omega}{\to} E_1 \to 0$, with $\omega$ surjective (see \cite{BP}). 

\begin{rem}\cite[Remark 3.11]{AMV}
	If $C$ is a cosilting module, then $(\lcirc{C},\Cogen C)$ is a torsion pair.
\end{rem}

A class $\C$ in $\MA$ is called \emph{definable} if it is closed under direct products, direct limits and pure submodules. The next proposition is a summary of some results proved in \cite{BP} and \cite{ZW} (cf. also \cite{BZ}).

\begin{prop}
	Let $A$ be a ring. Every cosilting $A$-module is pure-injective. Every cosilting class is a definable subcategory of $\MA$ and moreover the cosilting classes are precisely the definable torsionfree classes in $\MA$.
\end{prop}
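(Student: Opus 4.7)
The plan is to follow the Bazzoni-style strategy adapted to cosilting modules in \cite{BP} and \cite{ZW}. Fix a cosilting module $C$ with injective copresentation $0 \lra C \lra E_0 \overset{\omega}{\lra} E_1$. The first main step is to verify that $\Cogen C = \C_\omega$ is definable. Closure under direct products is immediate from $\Cogen C$ being a torsionfree class; closure under pure submodules follows because $E_0$ and $E_1$ are injective (hence pure-injective), so $\Hom_A(-,E_i)$ sends pure monomorphisms to epimorphisms and a diagram chase transports surjectivity of $\Hom_A(-,\omega)$ along pure subobjects; closure under direct limits rests on a Mittag-Leffler argument for the inverse system $\{\Hom_A(X_\alpha,E_i)\}$, again exploiting pure-injectivity of the $E_i$. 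This establishes the definability of cosilting classes.

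Next I would deduce the pure-injectivity of $C$ itself. Embedding $C$ pure-monomorphically into its pure-injective envelope $\mathrm{PE}(C)$ yields a short exact sequence $0 \lra C \lra \mathrm{PE}(C) \lra \mathrm{PE}(C)/C \lra 0$, and the quotient lies in $\Cogen C$ because definable subcategories of $\MA$ are closed under pure-epimorphic images. To see that this sequence splits, apply $\Hom_A(N,-)$ to $0 \lra C \lra E_0 \lra \Img(\omega) \lra 0$: since $E_0$ is injective, one obtains $\Ext^1_A(N,C) \cong \Coker\left(\Hom_A(N,E_0) \lra \Hom_A(N,\Img(\omega))\right)$ for every $N$, and the defining lifting property of $\omega$ forces this cokernel to vanish whenever $N \in \C_\omega$. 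Applied to $N = \mathrm{PE}(C)/C$, this gives $\Ext^1_A(\mathrm{PE}(C)/C,C)=0$, so the sequence splits and $C$ is a direct summand of $\mathrm{PE}(C)$, hence pure-injective.

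For the classification as definable torsionfree classes, one direction is the content of the previous two paragraphs. For the converse, given a definable torsionfree class $\Fcal \subseteq \MA$, I would take $C$ to be a pure-injective cogenerator of $\Fcal$ (which exists because $\Fcal$ is closed under products and contains the pure-injective envelopes of its members), let $E_0$ be the injective envelope of $C$ and $E_1$ be the injective envelope of $E_0/C$, and form the resulting injective copresentation $0 \lra C \lra E_0 \overset{\omega}{\lra} E_1$. The equality $\C_\omega = \Fcal$ then follows from the inclusion $\Cogen C \subseteq \Fcal$ (since $C \in \Fcal$ cogenerates $\Fcal$) together with a verification, using the canonical choice of $E_0$ and $E_1$ and the definability of $\Fcal$, that every $X \in \Fcal$ satisfies the required lifting property against $\omega$. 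The principal technical obstacle throughout is the Mittag-Leffler step underpinning closure of $\C_\omega$ under direct limits and under pure-epimorphic images, which is the heart of Bazzoni's original theorem and requires additional care here because $\omega$ need not be surjective.
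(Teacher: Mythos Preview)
The paper does not prove this proposition at all: it is stated explicitly as ``a summary of some results proved in \cite{BP} and \cite{ZW} (cf.\ also \cite{BZ})'' and no argument is given. Your proposal therefore goes well beyond the paper's treatment by actually sketching the Bazzoni--Breaz--Pop strategy from those references.

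Your sketch is essentially sound. The crucial step---showing $\Ext^1_A(N,C)=0$ for $N\in\C_\omega$ by applying $\Hom_A(N,-)$ to $0\to C\to E_0\to\Img\omega\to 0$---is correct: factoring $\omega=\iota\pi$ with $\iota\colon\Img\omega\hookrightarrow E_1$ mono, surjectivity of $\Hom_A(N,\omega)$ and injectivity of $\Hom_A(N,\iota)$ force $\Hom_A(N,\pi)$ to be surjective. One small point you glide over is why $\mathrm{PE}(C)\in\Cogen C$; this holds because definable classes are closed under pure-injective envelopes (e.g.\ $\mathrm{PE}(C)$ is a summand of an ultrapower of $C$). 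The converse direction is only gestured at, and indeed requires the most work in the cited papers, but since the paper itself defers entirely to the literature your level of detail already exceeds it.
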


\begin{rem}
	By the previous proposition we can infer that the cosilting torsion pairs $(\lcirc{C},\Cogen C)$ are precisely the torsion pairs of finite type.
\end{rem}

\subsection{Simple objects in the heart}
\label{ssec:simplesheart}

Let $\G = \AM$, for a ring $A$. Consider a torsion pair $\tp=(\Qcal,\C)$ in $\G$. The following statements are part of \cite{AHL-ong}, an ongoing work by Angeleri H\"ugel, Herzog and Laking. We omit the proofs of the Propositions in this section and we refer to \cite{AHL-ong}.

\begin{defn}\label{def:tfat}
	An $A$-module $Y$ is \emph{torsionfree, almost torsion} if it satisfies:
	\begin{enumerate}[noitemsep, label=\rm(\roman*)]
		\item $Y \in \C$ and all proper quotient modules of $Y$ are in $\Qcal$.
		\item For any short exact sequence $0\to Y\to B\to C\to 0$ with $B\in\mathcal C$, then $C\in\mathcal C$. 
	\end{enumerate}
	We say that $Y$ is \emph{torsion, almost torsionfree} if it satisfies the dual properties.
\end{defn}
We have the following:
\begin{lemma}\cite{AHL-ong}
	\label{congtf/t}
	Let $X,X'\in \G$ be both torsionfree, almost torsion, or both torsion, almost torsionfree. If $\Hom_\G(X,X') \neq 0$, then $X \cong X'$.
\end{lemma}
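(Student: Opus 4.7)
The plan is to take a nonzero morphism $f : X \to X'$ between torsionfree, almost torsion objects and show via a short case analysis that $f$ must be an isomorphism. The key background fact is that $\Qcal \cap \C = 0$, which is immediate from $\Hom_\G(\Qcal,\C) = 0$: any object in the intersection has a zero identity morphism, so it is zero.

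First I would show that $f$ is a monomorphism. Let $K = \ker f$ and $I = \operatorname{Im} f$, so $X/K \cong I$. Since $f \neq 0$, $K$ is a proper subobject of $X$, making $X/K$ a proper quotient of $X$. Condition (i) of \emph{torsionfree, almost torsion} then places $X/K \cong I$ in $\Qcal$. On the other hand, $I$ is a subobject of $X' \in \C$, and $\C$ is closed under subobjects (Remark \ref{rem:torsclass}), so $I \in \C$. Thus $I \in \Qcal \cap \C = 0$, forcing $f = 0$, a contradiction. Hence $K = 0$.

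Next I would show $f$ is an epimorphism. Since $f$ is injective, there is a short exact sequence $0 \to X \to X' \to X'/X \to 0$ with $X' \in \C$. Applying condition (ii) of \emph{torsionfree, almost torsion} (to $X$, with $B := X'$) yields $X'/X \in \C$. If $X'/X$ were nonzero, then it would be a nonzero proper quotient of $X'$, and condition (i) applied now to $X'$ would give $X'/X \in \Qcal$. Once more $X'/X \in \Qcal \cap \C = 0$, so in fact $X'/X = 0$ and $f$ is an isomorphism. The statement for \emph{torsion, almost torsionfree} objects is handled by the formally dual argument, interchanging kernels with cokernels and the roles of $\Qcal$ and $\C$.

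There is no real obstacle beyond bookkeeping: the only subtlety is keeping track of when to invoke which clause of Definition \ref{def:tfat}. Condition (i) gets used twice, once for $X$ (to rule out a nontrivial kernel of $f$) and once for $X'$ (to exclude a nontrivial cokernel), while condition (ii) is used exactly once, to transport membership in $\C$ from $X'$ to the quotient $X'/X$. Closure of $\C$ under subobjects (from Remark \ref{rem:torsclass}) and the disjointness $\Qcal \cap \C = 0$ do the rest.
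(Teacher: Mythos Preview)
The paper does not actually give its own proof of this lemma: it is stated with attribution to \cite{AHL-ong}, and the surrounding text says explicitly that the proofs in \S\ref{ssec:simplesheart} are omitted. So there is nothing to compare against, and your argument is the natural one.

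Your proof is correct in substance, but there is a small slip in the injectivity step. You write ``Since $f \neq 0$, $K$ is a proper subobject of $X$, making $X/K$ a proper quotient of $X$,'' and then invoke condition~(i). But $f \neq 0$ only gives $K \neq X$; condition~(i) of Definition~\ref{def:tfat} requires the quotient to be \emph{proper} in the sense that the kernel is nonzero (otherwise $X/K = X \in \C$ and the condition does not apply). The argument should instead read: suppose $K \neq 0$; then $X/K$ is a proper quotient, so $X/K \cong I \in \Qcal$ by (i), while $I \subseteq X' \in \C$ gives $I \in \C$, hence $I = 0$ and $f = 0$, contradicting the hypothesis. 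Your conclusion ``Hence $K = 0$'' shows you had this structure in mind; only the justification of the phrase ``proper quotient'' needs adjusting. The surjectivity step and the dual case are fine as written.
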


Let $\A = \G(\Qcal,\C)$ be the HRS-heart of the torsion pair $(\Qcal,\C)$. We can compute kernels and cokernels of morphisms in $\A$ via the following formulas:
\begin{lemma}\cite{AHL-ong}
	\label{heartlemma}
	\begin{enumerate}[noitemsep, label=\rm(\arabic*)]
		\item Let $f:X\to Y$ be a morphism in $\A = \G(\Qcal,\C)$, and let $Z$ be the cone of $f$ in $\derb(\G)$. Consider the canonical triangle, given by the truncation functors: 
		\[
		K = \tau_{\leq-1}Z \lra Z \lra \tau_{\geq 0}Z \lra K[1]
		\]
		where $\tau_{\le-1}Z \in \Dcal^{\le -1}$ and $\tau_{\ge 0}Z \in \Dcal^{\ge 0}$. Then: 
		\[
		\Ker_\A(f) = K[-1] \quad \Coker_\A(f)=\tau_{\ge 0}Z. 
		\]
		\item Let $h \colon X \to Y$ be a morphism in $\G$ with $X,Y\in\C$. Then:
		\begin{itemize}[noitemsep]
			\item $h[1] \colon X[1] \to Y[1]$ is a monomorphism in $\A$ if and only if $\Ker h = 0$ and $\Coker h \in \C$.
			\item $h[1] \colon X[1] \to Y[1]$ is an epimorphism in $\A$ if and only if $\Coker h \in \Qcal$.
		\end{itemize}
		\item Let $h \colon X \to Y$ be a morphism in $\G$ with $X,Y \in \Qcal$. Then:
		\begin{itemize}[noitemsep]
			\item $h$ is a monomorphism in $\A$ if and only if $\Ker h \in \C$.
			\item $h$ is an epimorphism in $\A$ if and only if $\Coker h = 0$ and $\Ker h \in \Qcal$.
		\end{itemize}
	\end{enumerate}
\end{lemma}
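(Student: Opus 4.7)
The three parts form a single package: (1) is the conceptual core, identifying kernels and cokernels in $\A$ with the truncations of the triangulated cone in the HRS-$t$-structure, while (2) and (3) follow from (1) by a direct unravelling of the definition of $(\Dcal^{\le 0}, \Dcal^{\ge 0})$ in Example~\ref{ex:HRSheart} against the standard cohomology of $\mathrm{Cone}(h)$ computed in $\G$.

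For (1), I would complete $f$ to a triangle $X \overset{f}{\lra} Y \lra Z \lra X[1]$ in $\derb(\G)$ and first show that $Z \in \Dcal^{\le 0} \cap \Dcal^{\ge -1}$. The containment $Z \in \Dcal^{\le 0}$ is immediate, since $X, Y \in \A \subseteq \Dcal^{\le 0}$ and $\Dcal^{\le 0}$ is closed under extensions; rotating to $Y \lra Z \lra X[1]$ and using $Y \in \Dcal^{\ge 0}$ and $X[1] \in \Dcal^{\ge -1}$ yields $Z \in \Dcal^{\ge -1}$. Consequently, in the canonical truncation triangle $\tau_{\le -1}Z \lra Z \lra \tau_{\ge 0}Z \lra \tau_{\le -1}Z[1]$ one has $K = \tau_{\le -1}Z \in \Dcal^{\le -1} \cap \Dcal^{\ge -1} = \A[1]$, so $K[-1] \in \A$, and $\tau_{\ge 0}Z \in \Dcal^{\ge 0} \cap \Dcal^{\le 0} = \A$. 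Applying the $\A$-valued cohomological functors $H^n_\A$ of the HRS-$t$-structure to the triangle defining $Z$ and using $H^n_\A(X) = H^n_\A(Y) = 0$ for $n \neq 0$, the long exact sequence in $\A$ collapses to
\[ 0 \lra H^{-1}_\A(Z) \lra X \overset{f}{\lra} Y \lra H^0_\A(Z) \lra 0. \]
Now $K \in \A[1]$ means $K$ is concentrated in $\A$-cohomological degree $-1$ with $H^{-1}_\A(K) = K[-1]$, while $\tau_{\ge 0}Z \in \A$ gives $H^0_\A(Z) = \tau_{\ge 0}Z$. This identifies $\Ker_\A(f) = H^{-1}_\A(Z) = K[-1]$ and $\Coker_\A(f) = H^0_\A(Z) = \tau_{\ge 0}Z$, proving (1).

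For (2), I would apply (1) to the morphism $h[1] \colon X[1] \lra Y[1]$ in $\A$, whose cone in $\derb(\G)$ is $\mathrm{Cone}(h)[1]$, with standard cohomology $\Ker h$ in degree $-2$ and $\Coker h$ in degree $-1$. Then $h[1]$ is monic iff $\tau_{\le -1}(\mathrm{Cone}(h)[1]) = 0$, iff $\mathrm{Cone}(h)[1] \in \Dcal^{\ge 0}$; unravelling Example~\ref{ex:HRSheart} this amounts to $\Ker h = 0$ and $\Coker h \in \C$. Dually, $h[1]$ is epic iff $\tau_{\ge 0}(\mathrm{Cone}(h)[1]) = 0$, iff $\mathrm{Cone}(h)[1] \in \Dcal^{\le -1}$, iff $\Coker h \in \Qcal$. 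Part (3) is handled by the symmetric argument applied to $h$ itself: for $X, Y \in \Qcal$, the cone $\mathrm{Cone}(h)$ has standard cohomology $\Ker h$ in degree $-1$ and $\Coker h$ in degree $0$, and the conditions $\mathrm{Cone}(h) \in \Dcal^{\ge 0}$, respectively $\mathrm{Cone}(h) \in \Dcal^{\le -1}$, translate into the stated characterisations of monos and epis. The only step requiring a genuine triangulated argument is the containment $Z \in \Dcal^{\le 0} \cap \Dcal^{\ge -1}$ in (1); once that is in hand, (2) and (3) reduce to bookkeeping between the HRS-truncations on the cone and the standard cohomology of the underlying complex in $\G$.
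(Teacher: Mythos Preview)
The paper does not give its own proof of this lemma: in Section~\ref{ssec:simplesheart} it explicitly states ``We omit the proofs of the Propositions in this section and we refer to \cite{AHL-ong}''. So there is no in-paper argument to compare against.

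Your proposal is correct and is the standard route. A couple of small remarks: in the first step of (1) it is slightly cleaner to argue that $Z \in \Dcal^{\ge -1}$ forces $\tau_{\le -2}Z = 0$, whence $K = \tau_{\le -1}Z = H^{-1}_{\A}(Z)[1]$ directly (rather than checking $K \in \Dcal^{\ge -1}$ via an extension argument). For (2) and (3), your translation of $\mathrm{Cone}(h)[1] \in \Dcal^{\ge 0}$ and $\mathrm{Cone}(h)[1] \in \Dcal^{\le -1}$ into the stated conditions on $\Ker h$ and $\Coker h$ is accurate; just make sure when you write it up that you spell out $\Dcal^{\le -1} = \{X \mid H^{-1}(X) \in \Qcal,\ H^i(X)=0 \text{ for } i \ge 0\}$ from the shift of the description in Example~\ref{ex:HRSheart}, since that is the only place a reader might stumble.
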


The following is the main characterization Theorem for simple objects in the HRS-heart of an HRS-t-structure.

\begin{thm}\label{simples} \cite{AHL-ong} (cf. \cite[Lemma 2.2]{Woo})
	The simple objects in $\A$ are precisely the objects $S$ of the form $S=Y[1]$ with $Y$ torsionfree, almost torsion, or $S=Q$ with $Q$ torsion, almost torsionfree.
\end{thm}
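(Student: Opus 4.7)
The plan is to exploit the canonical torsion pair $(\C[1], \Qcal)$ in the heart $\A = \G(\Qcal,\C)$ to reduce to two dual cases. Since any simple $S \in \A$ has at most one nonzero factor in its HRS decomposition $0 \to H^{-1}(S)[1] \to S \to H^0(S) \to 0$, either $S = Y[1]$ with $Y \in \C$, or $S = Q$ with $Q \in \Qcal$. I will treat the first case in detail; the second follows by a symmetric argument interchanging $\Qcal \leftrightarrow \C$, submodule and quotient, and parts (2) and (3) of Lemma \ref{heartlemma}.

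For the implication ``$Y$ torsionfree, almost torsion $\Rightarrow$ $Y[1]$ simple,'' let $Z \hookrightarrow Y[1]$ be a nonzero subobject in $\A$ with cokernel $C'$. Taking cohomology of the triangle $Z \to Y[1] \to C' \to Z[1]$ yields an exact sequence
\[ 0 \to H^{-1}(Z) \to Y \to H^{-1}(C') \to H^0(Z) \to 0 \]
in $\G$, with $H^{-1}(Z), H^{-1}(C') \in \C$ and $H^0(Z) \in \Qcal$. Set $Y' = H^{-1}(Z)$; then $Y/Y'$ embeds in $H^{-1}(C') \in \C$, so $Y/Y' \in \C$, and condition (i) of the definition forces $Y' = 0$ or $Y' = Y$. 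If $Y' = 0$, condition (ii) applied to $0 \to Y \to H^{-1}(C') \to H^0(Z) \to 0$ forces $H^0(Z) \in \C \cap \Qcal = 0$, so $Z = 0$, a contradiction. If $Y' = Y$, then $H^{-1}(C') \cong H^0(Z) \in \C \cap \Qcal = 0$, so $C' = 0$ and $Z = Y[1]$.

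For the converse, assume $Y[1]$ is simple. To verify (i), take a nonzero $Y' \subseteq Y$, decompose $Y/Y'$ via its torsion sequence $0 \to T \to Y/Y' \to F \to 0$ with $T \in \Qcal$, $F \in \C$, and let $Y''$ be the preimage of $T$ in $Y$, so that $Y/Y'' \cong F \in \C$. By Lemma \ref{heartlemma}(2), the inclusion $Y'' \hookrightarrow Y$ lifts to a nonzero monomorphism $Y''[1] \hookrightarrow Y[1]$ in $\A$, which must be an isomorphism by simplicity; hence $Y'' = Y$, $F = 0$, and $Y/Y' = T \in \Qcal$. For (ii), let $0 \to Y \to B \to C \to 0$ with $B \in \C$ and suppose the torsion part $T$ of $C$ is nonzero; its preimage $B'$ in $B$ lies in $\C$ (as a subobject of $B$), and the extension $0 \to Y \to B' \to T \to 0$ defines a morphism $T \to Y[1]$ in $\A$ whose cone in $\derb(\G)$ is $B'[1] \in \A$. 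Lemma \ref{heartlemma}(1) then gives kernel $0$ and cokernel $B'[1]$; the morphism is nonzero (otherwise the extension splits and $T \hookrightarrow B \in \C$ with $T \in \Qcal$ forces $T = 0$), so $T$ is a nonzero subobject of $Y[1]$, which would force $T \cong Y[1]$ by simplicity---impossible since $T \in \Qcal$ and $Y[1] \in \C[1]$ have cohomology concentrated in different degrees. Thus $T = 0$ and $C = F \in \C$.

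The main technical obstacle is the careful bookkeeping of HRS-truncations when computing kernels and cokernels via Lemma \ref{heartlemma}(1); one must correctly identify which data in $\G$ (a submodule of $Y$, or an extension class over $Y$) corresponds to each possible type of subobject of $Y[1]$ in $\A$. Once these translations are set up, the two conditions defining a torsionfree, almost torsion object are exactly what is needed to eliminate the two possible sources of nontrivial subobjects of $Y[1]$.
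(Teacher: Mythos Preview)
Your proof is correct. The paper does not actually give a proof of this theorem: as stated at the beginning of Section~\ref{ssec:simplesheart}, the proofs in that subsection are omitted and deferred to the ongoing work~\cite{AHL-ong}. There is therefore nothing in the paper itself to compare your argument against. For the record, your approach---reducing via the torsion pair $(\C[1],\Qcal)$ in $\A$ to the cases $S=Y[1]$ and $S=Q$, and then using the cohomology long exact sequence together with Lemma~\ref{heartlemma} to match simplicity in $\A$ with the two conditions in Definition~\ref{def:tfat}---is the standard one, and each step (the cohomology computation identifying subobjects of $Y[1]$ with pairs consisting of a submodule of $Y$ and an extension class, the elimination of both via conditions (i) and (ii), and the converse verifications) is carried out correctly.
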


Moreover, if the heart $\A$ is a Grothendieck category, it is possible to describe the injective envelopes of the objects in the heart by means of special covers and envelopes.

\begin{prop}\cite{AHL-ong} \label{injenv}
Let $\G=\AM$. Consider a torsion pair $\tp = (\Qcal,\C)$ such that $\A = \G(\Qcal,\C)$ is a Grothendieck category.
	\begin{enumerate}[noitemsep, label=\rm(\arabic*)]
		\item Let $Y\in \C$, and let $0\to Y\stackrel{f}{\to} B\to C\to 0$ be a special $\rperp{\C}$-envelope. Then    
		$Y[1]\stackrel{f[1]}{\to} B[1]$ is an injective envelope of $Y[1]$ in $\A$.
		\item Let $Q\in\Qcal$, and let  $ 0 \to B \stackrel{f}{\to} C \stackrel{g}{\to} Q \to 0$ be a special $\C$-cover. Then $Q \to B[1]$ is an injective envelope of $Q$ in $\A$.
	\end{enumerate}
\end{prop}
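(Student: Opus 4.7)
Plan. The two parts are parallel; in each I verify that the displayed arrow is (i) a monomorphism in $\A$, (ii) its target $B[1]$ is injective in $\A$, and (iii) the monomorphism is essential. Steps (i) and (ii) use only $B\in\rperp{\C}\cap\C$, which holds in both settings (in (1), $B\in\C$ since $\C$ is closed under extensions); the minimality encoded in the words \emph{envelope} and \emph{cover} only enters in step (iii).

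For (i), in part (1) Lemma \ref{heartlemma}(2) applies directly because $\Ker f=0$ and $\Coker f=C\in\C$. In part (2), rotating the triangle $B\to C\xrightarrow{g} Q\xrightarrow{\delta} B[1]$ identifies the cone of $\delta$ with $C[1]$; since $C\in\C$ the complex $C[1]$ already lies in $\Dcal^{\geq 0}$, so $\tau_{\leq -1}(C[1])=0$ and Lemma \ref{heartlemma}(1) gives $\Ker_\A(\delta)=0$.

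For (ii), using $\Ext^1_\A(X,B[1])\cong\Hom_{\derb(\G)}(X,B[2])$ and applying $\Hom_{\derb(\G)}(-,B[2])$ to the canonical triangle $H^{-1}(X)[1]\to X\to H^0(X)\to H^{-1}(X)[2]$ for $X\in\A$, the vanishing of $\Ext^1_\A(X,B[1])$ reduces to $\Ext^1_\G(H^{-1}(X),B)=0$ and $\Ext^2_\G(H^0(X),B)=0$. The first is immediate from $H^{-1}(X)\in\C$ and $B\in\rperp{\C}$; the second is automatic in the hereditary setting of this paper, and more generally whenever $(\C,\rperp{\C})$ is a hereditary cotorsion pair, by dimension-shifting.

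The delicate step is (iii), which I would handle via the characterization that a monomorphism $\alpha\colon N\hookrightarrow E$ into an injective in a Grothendieck category is the injective envelope precisely when every $\phi\in\End_\A(E)$ satisfying $\phi\alpha=\alpha$ is an automorphism. Since $\C[1]\hookrightarrow\A$ is fully faithful, $\End_\A(B[1])\cong\End_\G(B)$ canonically. In part (1), the condition $\phi\circ f[1]=f[1]$ then reads $hf=f$ for the corresponding $h\in\End_\G(B)$, and the left minimality of the envelope forces $h$, hence $\phi$, to be an automorphism. In part (2), $\phi\circ\delta=\delta$ translates to $h_\ast\delta=\delta$ in $\Ext^1_\G(Q,B)$; unwinding this yields a morphism of short exact sequences extending $h$ with the identity on $Q$, and the right minimality of the cover $g\colon C\to Q$ makes the middle vertical map an automorphism, so the five-lemma forces $h$ to be an automorphism as well.
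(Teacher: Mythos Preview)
The paper does not contain a proof of this proposition: at the start of Section~\ref{ssec:simplesheart} it is stated that these results are taken from \cite{AHL-ong} and that ``We omit the proofs of the Propositions in this section and we refer to \cite{AHL-ong}.'' There is therefore nothing in the paper to compare your argument against.

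Assessed on its own, your argument is correct in the setting of the paper (where $\G=\LMla$ is hereditary). Steps (i) and (iii) go through as written; in (iii)(2) what you call the five-lemma is really the short five lemma applied to the morphism of short exact sequences $(h,\psi,\mathrm{id})$, which is fine. For step (ii), the identification $\Ext^1_\A(X,B[1])\cong\Hom_{\derb(\G)}(X,B[2])$ holds by the standard fact that triangles in $\derb(\G)$ with vertices in $\A$ are exactly the short exact sequences of $\A$, and the vanishing of $\Ext^2_\G(H^0(X),B)$ is immediate from heredity of $\Lambda$. Your aside that a hereditary cotorsion pair $(\C,\rperp{\C})$ would suffice in general is not adequately justified, however: dimension-shifting along a special $\C$-precover $0\to K\to C_0\to H^0(X)\to 0$ yields $K\in\rperp{\C}$, and $\Ext^1_\G(K,B)$ need not vanish merely because $B\in\rperp{\C}$. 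This caveat is irrelevant for the uses made of the proposition in the present paper.
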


\section{The case of the Kronecker algebra}
Let us now consider the Kronecker algebra $\La$, ie. the path algebra over a field $k$ of the quiver: 
\[
\xymatrix{
	\bullet \ar@<0.5ex>[r] \ar@<-0.5ex>[r] & \bullet
}
\]
This is a tame hereditary algebra and the Auslander-Reiten quiver of $\lmla$ is: 
\begin{figure}[h]
	\centering
	\begin{tikzpicture}[scale=1.25]
	\draw  (-0.75,0) ellipse (0.125 and 0.05);
	\draw (-0.875,0) -- (-0.875,1.5);
	\draw (-0.625,0) -- (-0.625,1.5);
	\draw  (-1.125,0) ellipse (0.125 and 0.05);
	\draw (-1.25,0) -- (-1.25,1.5);
	\draw (-1,0) -- (-1,1.5);
	\draw  (1.125,0) ellipse (0.125 and 0.05);
	\draw (1,0) -- (1,1.5);
	\draw (1.25,0) -- (1.25,1.5);
	\draw  (0.75,0) ellipse (0.125 and 0.05);
	\draw (0.625,0) -- (0.625,1.5);
	\draw (0.875,0) -- (0.875,1.5);
	\draw  (0,0) ellipse (0.125 and 0.05);
	\draw (-0.125,0) -- (-0.125,1.5);
	\draw (0.125,0) -- (0.125,1.5);
	
	\draw  (-0.375,0) ellipse (0.125 and 0.05);
	\draw (-0.5,0) -- (-0.5,1.5);
	\draw (-0.25,0) -- (-0.25,1.5);

	\draw [fill] (-4,0) node (v1) {} circle [radius = .05];
	\draw [fill] (-3.5,0.5) node (v2) {} circle [radius = .05];
	\draw [fill] (-3,0) node (v3) {} circle [radius = .05];
	\draw [fill] (-2.5,0.5) node (v4) {} circle [radius = .05];
	\draw [fill] (-2,0) node (v5) {} circle [radius = .05];
	
	\draw [transform canvas={yshift=0.2ex},transform canvas={xshift=-0.2ex},->] (v1) -- (v2);
	\draw [transform canvas={yshift=-0.2ex},transform canvas={xshift=0.2ex},->] (v1) -- (v2);
	\draw [transform canvas={yshift=0.2ex},transform canvas={xshift=-0.2ex},->](v3) -- (v4);
	\draw [transform canvas={yshift=-0.2ex},transform canvas={xshift=0.2ex},->] (v3) -- (v4);
	\draw [transform canvas={yshift=-0.2ex},transform canvas={xshift=-0.2ex},->] (v2) -- (v3);
	\draw [transform canvas={yshift=0.2ex},transform canvas={xshift=0.2ex},->] (v2) -- (v3);
	\draw [transform canvas={yshift=-0.2ex},transform canvas={xshift=-0.2ex},->](v4) -- (v5);
	\draw [transform canvas={yshift=0.2ex},transform canvas={xshift=0.2ex},->](v4) -- (v5);
	\node at (-1.625,0.25) {$\dots$};
	
	\draw [fill] (2,0) node (v1) {} circle [radius = .05];
	\draw [fill] (2.5,0.5) node (v2) {} circle [radius = .05];
	\draw [fill] (3,0) node (v3) {} circle [radius = .05];
	\draw [fill] (3.5,0.5) node (v4) {} circle [radius = .05];
	\draw [fill] (4,0) node (v5) {} circle [radius = .05];
	
	\draw [transform canvas={yshift=0.2ex},transform canvas={xshift=-0.2ex},->] (v1) -- (v2);
	\draw [transform canvas={yshift=-0.2ex},transform canvas={xshift=0.2ex},->] (v1) -- (v2);
	\draw [transform canvas={yshift=0.2ex},transform canvas={xshift=-0.2ex},->](v3) -- (v4);
	\draw [transform canvas={yshift=-0.2ex},transform canvas={xshift=0.2ex},->] (v3) -- (v4);
	\draw [transform canvas={yshift=-0.2ex},transform canvas={xshift=-0.2ex},->] (v2) -- (v3);
	\draw [transform canvas={yshift=0.2ex},transform canvas={xshift=0.2ex},->] (v2) -- (v3);
	\draw [transform canvas={yshift=-0.2ex},transform canvas={xshift=-0.2ex},->](v4) -- (v5);
	\draw [transform canvas={yshift=0.2ex},transform canvas={xshift=0.2ex},->](v4) -- (v5);
	\node at (1.625,0.25) {$\dots$};

	\draw [very thin, decorate, decoration={brace, amplitude=3pt}](-1.375,-0.125) -- (-4.125,-0.125);
	\node at (-2.75,-0.375) {$\p$};
	\draw [very thin, decorate, decoration={brace, amplitude=3pt}](4.125,-0.125) -- (1.375,-0.125);
	\node at (2.75,-0.375) {$\q$};
	\node at (0,-0.375) {$\tube$};
	\node at (0.375,0.25) {$...$};
	\end{tikzpicture}
	\caption{Auslander-Reiten quiver of $\lmla$.}
\end{figure}

where $\p$ is the preprojective component, whose modules are denoted by $P_i$, for $i \geq 1$, indexed in such a way that $\dim_k\Hom_\La(P_i,P_{i+1}) = 2$. Dually, $\q$ is the preinjective component, whose modules are denoted by $Q_i$, for $i \geq 1$, indexed in such a way that $\dim_k\Hom_\La(Q_{i+1},Q_i) = 2$ and $\tube$ is a sincere stable and separating family of regular homogeneous tubes, $\tube_x$, indexed by the projective line over $k$, $\tube=\bigcup_{x\in\projl} \tube_x$. We denote by $S_x^\infty$ and $S_x^{-\infty}$ the Pr\"ufer and the adic module, respectively, corresponding to the simple regular $S_x$. Further, we denote by $G$ the generic module, ie. the unique (up to isomorphism) indecomposable module which has infinite length over $\La$, but finite length over its endomorphism ring. Recall, from \cite{RR, RSymp}, that $\End_RG$ is a division ring. 

Let $P_1$ and $Q_1$ be the simple projective and the simple injective $\La$-modules, respectively. Following \cite{AMV1}, for a nonempty subset $\Ucal \subseteq \projl$, we denote by $\La_\Ucal$ the universal localization of $\La$ at the set of morphisms given by the projective resolutions of modules in the tubes indexed by elements in $\Ucal$. The silting $\La$-modules have been completely described in \cite{AMV1} and all of them, except two, are tilting. By taking the duality with respect to the injective cogenerator, silting right $\La$-modules turns out to be in bijective correspondence to cosilting left $\La$-modules (see \cite[Corollary 3.7]{AHr}). Therefore, we can summarize in the following table all the nonzero silting and cosilting $\La$-modules (together with the correspondent cosilting classes):
\begin{table}[h]
	\[
	\begin{array}{lll}
		\text{silting right $\La$-module} & \text{cosilting left $\La$-module} & \text{cosilting class} \\ 
		\hline\\[-1.5ex]
		P_1 & Q_1 & \Cogen(Q_1) \\[1ex]
		Q_1 & P_1 & \Cogen(P_1) \\[1ex]
		(P_i \oplus P_{i+1})_{i \geq 1} & (Q_i \oplus Q_{i+1})_{i \geq 1} & \Cogen(Q_i), i \geq 1 \\[1ex]
		(Q_{i+1} \oplus Q_i)_{i \geq 1} & (P_{i+1} \oplus P_i)_{i \geq 1} & \Cogen(P_{i+1}), i \geq 1 \\[1ex]
		(\La_\Ucal \oplus \La_\Ucal/\La)_{\varnothing \neq \Ucal \subseteq \projl} & (C_\Ucal = G \oplus \prod_{x \in \Ucal} S_x^{-\infty} \oplus \bigoplus_{x \notin \Ucal} S_x^\infty)_{\varnothing \neq \Ucal \subseteq \projl} & \Cogen(C_\Ucal), \varnothing \neq \Ucal \subseteq \projl \\[1ex]
		\Lbf_\La & D(\Lbf_{\La}) & \Cogen(\Wbf) = \rcirc{\tube}
	\end{array}
	\]	
	\caption{Silting right and cosilting left $\La$-modules.}
	\label{table:cosilting}
\end{table}

In this table $\Lbf_\La$ denotes the Lukas tilting module and the only cosilting, non cotilting, $\La$-modules are $P_1$ and $Q_1$ (cf. \cite[Corollary 3.10]{BK}).
Moreover, the cotilting module $D(\Lbf_{\La})$ is equivalent to the Reiten-Ringel tilting-cotilting module $_\La\Wbf = G \oplus \bigoplus_{x \in \projl} S_x^\infty$ and it corresponds to $C_\Ucal$, for $\Ucal = \varnothing$ (see \cite{RR}).

\subsection{Hearts arising from cosilting torsion pairs} \label{ssec:cosiltinghearts}

Let $\G = \LMla$. We analyze more specifically the torsion pairs arising from the cosilting classes described above, giving also a description of the different HRS-hearts related to them. By \cite[Theorem 5.2]{Sao}, all the hearts we are going to describe are locally coherent categories.

\begin{itemize}	
	\item For the simple cosilting module $Q_1$, the cosilting torsion pair is $(\Qcal,\C) = (\lcirc{Q_1}, \Cogen(Q_1))$. Since $\lcirc(Q_1) = \Gen(P_1)$, we have $(\Qcal,\C) = (\Gen(P_1),\rcirc{P_1})$ which is the torsion pair generated by the silting module $P_1$. Hence, by \cite[Corollary 4.7]{PV}, the heart $\A = \G(\Qcal,\C)$ is equivalent to the category of modules over $\End(P_1)$, so $\A \cong k\lmod$.
	
	\item For the simple cosilting module $P_1$, the cosilting torsion pair is $(\Qcal,\C) = (\lcirc{P_1}, \Cogen(P_1))$. Since the class $\lcirc{P_1} = \Gen(P_2) = \Gen(P_2 \oplus P_3)$ (cf. \cite[Example 6.9]{A2}), we have $(\Qcal,\C) = (\Gen(P_2 \oplus P_3),\rcirc{(P_2 \oplus P_3)})$ which is the torsion pair generated by the silting module $P_2 \oplus P_3$. Hence, by \cite[Corollary 4.7]{PV}, the heart $\A = \G(\Qcal,\C)$ is equivalent to the category of modules over $\End(P_2 \oplus P_3)$, so $\A \cong \LMla$.
	
	\item The torsion pair cogenerated to the cotilting module $Q_1 \oplus Q_2$ is the trivial one $(0,\LMla)$, so the heart $\A = \G(\Qcal,\C)$ is $\LMla$ itself.
	
	\item $Q_{i+1} \oplus Q_i$, for $i > 2$, cogenerates the torsion pair $(\Qcal,\C) = (\lcirc{Q_i}, \Cogen(Q_i))$ which is the direct limit closure of the torsion pair $(\add\{Q_1 \oplus \dots \oplus Q_{i-1}\}, \add\{ \p \cup \tube \cup \{ Q_k \mid k \geq i \} \})$ in $\lmla$. Hence $(\Qcal,\C) = (\Gen(Q_{i-1}),\rcirc{Q_{i-1}})$, which is generated by the tilting module $Q_{i-1}\oplus Q_{i-2}$, so the heart $\A = \G(\Qcal,\C)$ is equivalent to $\LMla$.
	
	\item $P_i \oplus P_{i+1}$, for $i \geq 1$, cogenerates the torsion pair $(\Qcal,\C) = (\lcirc{P_{i+1}}, \Cogen(P_{i+1}))$ which is the direct limit closure of the torsion pair $(\add\{\{ P_k \mid k > i+1 \} \cup \tube \cup \q \}, \add\{P_1 \oplus \dots \oplus P_{i+1}\} )$ in $\lmla$. Hence $(\Qcal,\C) = (\Gen(P_{i+2}),\rcirc{P_{i+2}})$, which is generated by the tilting module $P_{i+2}\oplus P_{i+3}$, so the heart $\A = \G(\Qcal,\C)$ is equivalent to $\LMla$.
	
	\item The cotilting module $C_\Ucal$, for $\Ucal = \varnothing$, is the so called Reiten-Ringel tilting module $\mathbf{W}$:
	\[ \mathbf{W} = G \oplus \bigoplus_{x \in \projl} S_x^\infty \]
	and it cogenerates the torsion pair $(\Qcal,\C) = (\lcirc{\Wbf},\Cogen\Wbf)$, which is generated by $\q$. The heart $\A = \G(\Qcal,\C)$ is equivalent to the locally noetherian Grothendieck category $\Qcoh\projl$ of quasi-coherent sheaves over $\projl$ (see \cite[Section 3.1]{AK}).
	
	\begin{figure}[h]
		\centering
		\begin{tikzpicture}[scale=1.25]
		\draw  plot[smooth, tension=.1] coordinates {(-1.6,0.8) (-3.3,0.8) (-3.8,0) (-1.6,0)};
		\node at (-3,0.4) {$\p$};
		\draw  (-1.3,0.05) ellipse (0.1 and 0.05);
		\draw (-1.4,0.05) -- (-1.4,1.2);
		\draw (-1.2,0.05) -- (-1.2,1.2);
		
		\draw  (-1,0.05) ellipse (0.1 and 0.05);
		\draw (-1.1,0.05) -- (-1.1,1.2);
		\draw (-0.9,0.05) -- (-0.9,1.2);
		
		\draw  (-0.7,0.05) ellipse (0.1 and 0.05);
		\draw (-0.8,0.05) -- (-0.8,1.2);
		\draw (-0.6,0.05) -- (-0.6,1.2);
		
		\draw  (-0.4,0.05) ellipse (0.1 and 0.05);
		\draw (-0.5,0.05) -- (-0.5,1.2);
		\draw (-0.3,0.05) -- (-0.3,1.2);
		\node at (-0.1,0.4) {$...$};
		\draw  (0.2,0.05) ellipse (0.1 and 0.05);
		\draw (0.1,0.05) -- (0.1,1.2);
		\draw (0.3,0.05) -- (0.3,1.2);
		
		\draw  (0.5,0.05) ellipse (0.1 and 0.05);
		\draw (0.4,0.05) -- (0.4,1.2);
		\draw (0.6,0.05) -- (0.6,1.2);
		\node at (-0.4,-0.2) {$\tube$};
		
		\draw  plot[smooth, tension=.1] coordinates {(0.9,0.8) (3.1,0.8) (2.6,0) (0.9,0)};
		\node at (2.4,0.4) {$\q$};
		
		\draw  plot[smooth, tension=.1] coordinates {(5,0.8) (3.3,0.8) (2.8,0) (5,0)};
		\node at (3.65,0.4) {$\p[1]$};
		\draw  (5.3,0.05) ellipse (0.1 and 0.05);
		\draw (5.2,0.05) -- (5.2,1.2);
		\draw (5.4,0.05) -- (5.4,1.2);
		
		\draw  (5.6,0.05) ellipse (0.1 and 0.05);
		\draw (5.5,0.05) -- (5.5,1.2);
		\draw (5.7,0.05) -- (5.7,1.2);
		
		\draw  (5.9,0.05) ellipse (0.1 and 0.05);
		\draw (5.8,0.05) -- (5.8,1.2);
		\draw (6,0.05) -- (6,1.2);
		
		\draw  (6.2,0.05) ellipse (0.1 and 0.05);
		\draw (6.1,0.05) -- (6.1,1.2);
		\draw (6.3,0.05) -- (6.3,1.2);
		\node at (6.5,0.4) {$...$};
		\draw  (6.8,0.05) ellipse (0.1 and 0.05);
		\draw (6.7,0.05) -- (6.7,1.2);
		\draw (6.9,0.05) -- (6.9,1.2);
		
		\draw  (7.1,0.05) ellipse (0.1 and 0.05);
		\draw (7,0.05) -- (7,1.2);
		\draw (7.2,0.05) -- (7.2,1.2);
		\node at (6.25,-0.2) {$\tube[1]$};
		
		\draw [very thin, decorate, decoration={brace, amplitude=5pt}](7.2,-0.3) -- (0.8,-0.3);
		\node at (4,-0.7) {$\A \cong \Qcoh\projl$};
		
		\draw [blue] plot[smooth, tension=.3] coordinates {(0.8,0) (0.9,1.4) (3.2,1.6)};
		\draw [black!60!green] plot[smooth, tension=.3] coordinates {(0.7,0) (0.5,1.8) (-3.8,1.7)};
		\draw [black!60!green] plot[smooth, tension=.3] coordinates {(7.3,0) (7.1,1.8) (3.2,1.7)};
		\node [black!60!green] at (-3.2,1.4) {$\C$};
		\node [blue] at (2.8,1.3) {$\Qcal$};
		\node [black!60!green] at (3.7,1.4) {$\C[1]$};
		
		\draw [densely dotted](-1.4,1.2) -- (-1.4,1.5);
		\draw [densely dotted](-1.2,1.2) -- (-1.2,1.5);
		\draw [densely dotted](-1.1,1.2) -- (-1.1,1.5);
		\draw [densely dotted](-0.9,1.2) -- (-0.9,1.5);
		\draw [densely dotted](-0.8,1.2) -- (-0.8,1.5);
		\draw [densely dotted](-0.6,1.2) -- (-0.6,1.5);
		\draw [densely dotted](-0.5,1.2) -- (-0.5,1.5);
		\draw [densely dotted](-0.3,1.2) -- (-0.3,1.5);
		\draw [densely dotted](0.1,1.2) -- (0.1,1.5);
		\draw [densely dotted](0.3,1.2) -- (0.3,1.5);
		\draw [densely dotted](0.4,1.2) -- (0.4,1.5);
		\draw [densely dotted](0.6,1.2) -- (0.6,1.5);
		\draw [densely dotted](5.2,1.2) -- (5.2,1.5);
		\draw [densely dotted](5.4,1.2) -- (5.4,1.5);
		\draw [densely dotted](5.5,1.2) -- (5.5,1.5);
		\draw [densely dotted](5.7,1.2) -- (5.7,1.5);
		\draw [densely dotted](5.8,1.2) -- (5.8,1.5);
		\draw [densely dotted](6,1.2) -- (6,1.5);
		\draw [densely dotted](6.1,1.2) -- (6.1,1.5);
		\draw [densely dotted](6.3,1.2) -- (6.3,1.5);
		\draw [densely dotted](6.7,1.2) -- (6.7,1.5);
		\draw [densely dotted](6.9,1.2) -- (6.9,1.5);
		\draw [densely dotted](7,1.2) -- (7,1.5);
		\draw [densely dotted](7.2,1.2) -- (7.2,1.5);
		\end{tikzpicture}
		\caption{Auslander-Reiten quiver of the heart $\A = \G(\Qcal,\C)$, for $\Ucal = \varnothing$.}
	\end{figure}
	
	\item $\C_\Ucal$, for $\varnothing \neq \Ucal \subsetneq \projl$, is the module
	\[ C_\Ucal = G \oplus \prod_{x \in \Ucal} S_x^{-\infty} \oplus \bigoplus_{x \notin \Ucal} S_x^\infty \]
	and it cogenerates the torsion pair $(\Qcal_\Ucal,\C_\Ucal) = (\lcirc{C_\Ucal},\Cogen C_\Ucal)$, which is generated by the set $ \bigcup_{x \in \Ucal} \tube_x \cup \q$. This cotilting module is not $\Sigma$-pure-injective, therefore, by \cite[Proposition 5.6]{La}, the heart $\A_\Ucal = \G(\Qcal_\Ucal,\C_\Ucal)$ is not locally noetherian.
	
	\begin{figure}[h]
		\centering
		\begin{tikzpicture}[scale=1.25]
		\draw  plot[smooth, tension=.1] coordinates {(-1.8,0.8) (-3.5,0.8) (-4,0) (-1.8,0)};
		\node at (-3.2,0.4) {$\p$};
		
		\draw  (-1.6,0.05) ellipse (0.1 and 0.05);
		\draw (-1.7,0.05) -- (-1.7,1.2);
		\draw (-1.5,0.05) -- (-1.5,1.2);
		
		\draw  (-1.3,0.05) ellipse (0.1 and 0.05);
		\draw (-1.4,0.05) -- (-1.4,1.2);
		\draw (-1.2,0.05) -- (-1.2,1.2);
		\node at (-1.05,0.4) {$...$};
		\draw  (-0.8,0.05) ellipse (0.1 and 0.05);
		\draw (-0.9,0.05) -- (-0.9,1.2);
		\draw (-0.7,0.05) -- (-0.7,1.2);
		\node at (-1.2,-0.2) {$\bigcup_{x \notin \Ucal} \tube_x$};
		
		\draw  (-0.3,0.05) ellipse (0.1 and 0.05);
		\draw (-0.4,0.05) -- (-0.4,1.2);
		\draw (-0.2,0.05) -- (-0.2,1.2);
		\node at (-0.05,0.4) {$...$};
		\draw  (0.2,0.05) ellipse (0.1 and 0.05);
		\draw (0.1,0.05) -- (0.1,1.2);
		\draw (0.3,0.05) -- (0.3,1.2);
		
		\draw  (0.5,0.05) ellipse (0.1 and 0.05);
		\draw (0.4,0.05) -- (0.4,1.2);
		\draw (0.6,0.05) -- (0.6,1.2);
		\node at (0.1,-0.2) {$\bigcup_{x \in \Ucal} \tube_x$};
		
		\draw  plot[smooth, tension=.1] coordinates {(0.9,0.8) (3.1,0.8) (2.6,0) (0.9,0)};
		\node at (2.4,0.4) {$\q$};
		
		\draw  plot[smooth, tension=.1] coordinates {(5,0.8) (3.3,0.8) (2.8,0) (5,0)};
		\node at (3.65,0.4) {$\p[1]$};
		
		\draw  (5.2,0.05) ellipse (0.1 and 0.05);
		\draw (5.1,0.05) -- (5.1,1.2);
		\draw (5.3,0.05) -- (5.3,1.2);
		
		\draw  (5.5,0.05) ellipse (0.1 and 0.05);
		\draw (5.4,0.05) -- (5.4,1.2);
		\draw (5.6,0.05) -- (5.6,1.2);
		\node at (5.75,0.4) {$...$};
		\draw  (6,0.05) ellipse (0.1 and 0.05);
		\draw (5.9,0.05) -- (5.9,1.2);
		\draw (6.1,0.05) -- (6.1,1.2);
		\node at (5.6,-0.2) {$\bigcup_{x \notin \Ucal} \tube_x[1]$};
		
		\draw [very thin, decorate, decoration={brace, amplitude=5pt}](6.2,-0.3) -- (-0.5,-0.3);
		\node at (2.85,-0.7) {$\A_\Ucal$};
		
		\draw [blue] plot[smooth, tension=.2] coordinates {(-0.5,0) (-0.4,1.6) (3.1,1.6)};
		\draw plot[smooth, tension=.2] coordinates {(-0.6,0) (-0.7,1.8) (-3.8,1.7)};
		\draw [black!60!green] plot[smooth, tension=.2] coordinates {(6.2,0) (6.1,1.8) (3.1,1.7)};
		
		\node [black!60!green] at (-3.2,1.3) {$\C_\Ucal$};
		\node [blue] at (2.8,1.3) {$\Qcal_\Ucal$};
		\node [black!60!green] at (3.6,1.3) {$\C_\Ucal[1]$};
		
		\draw [densely dotted](-1.7,1.2) -- (-1.7,1.5);
		\draw [densely dotted](-1.5,1.2) -- (-1.5,1.5);
		\draw [densely dotted](-1.4,1.2) -- (-1.4,1.5);
		\draw [densely dotted](-1.2,1.2) -- (-1.2,1.5);
		\draw [densely dotted](-0.9,1.2) -- (-0.9,1.5);
		\draw [densely dotted](-0.7,1.2) -- (-0.7,1.5);
		\draw [densely dotted](-0.4,1.2) -- (-0.4,1.5);
		\draw [densely dotted](-0.2,1.2) -- (-0.2,1.5);
		\draw [densely dotted](0.1,1.2) -- (0.1,1.5);
		\draw [densely dotted](0.3,1.2) -- (0.3,1.5);
		\draw [densely dotted](0.4,1.2) -- (0.4,1.5);
		\draw [densely dotted](0.6,1.2) -- (0.6,1.5);
		\draw [densely dotted](5.1,1.2) -- (5.1,1.5);
		\draw [densely dotted](5.3,1.2) -- (5.3,1.5);
		\draw [densely dotted](5.4,1.2) -- (5.4,1.5);
		\draw [densely dotted](5.6,1.2) -- (5.6,1.5);
		\draw [densely dotted](5.9,1.2) -- (5.9,1.5);
		\draw [densely dotted](6.1,1.2) -- (6.1,1.5);
		\end{tikzpicture}
		\caption{Auslander-Reiten quiver of the heart $\A_\Ucal = \G(\Qcal_\Ucal,\C_\Ucal)$, for $\Ucal \subsetneq \projl$.}
	\end{figure}
	
	\item If $\Ucal = \projl$, then $C_\Ucal$ is the module
	\[ C_\Ucal = G \oplus \prod_{x \in \projl} S_x^{-\infty} \]
	and it cogenerates the torsion pair $ (\Gen\tube,\Fcal) = (\lcirc{C_\Ucal},\Cogen C_\Ucal) $, which is generated by $\tube$. The cotilting module $C_\Ucal$ is not $\Sigma$-pure-injective, therefore, also in this case, via \cite[Proposition 5.6]{La}, the heart $\A_\Ucal = \G(\Gen\tube,\Fcal)$ is not a locally noetherian category.
	
	\begin{figure}[h]
		\centering
		\begin{tikzpicture}[scale=1.25]
		\draw  plot[smooth, tension=.1] coordinates {(-1.6,0.8) (-3.3,0.8) (-3.8,0) (-1.6,0)};
		\node at (-3,0.4) {$\p$};
		
		\draw  (-1.2,0.05) ellipse (0.1 and 0.05);
		\draw (-1.3,0.05) -- (-1.3,1.2);
		\draw (-1.1,0.05) -- (-1.1,1.2);
		
		\draw  (-0.9,0.05) ellipse (0.1 and 0.05);
		\draw (-1,0.05) -- (-1,1.2);
		\draw (-0.8,0.05) -- (-0.8,1.2);
		
		\draw  (-0.6,0.05) ellipse (0.1 and 0.05);
		\draw (-0.7,0.05) -- (-0.7,1.2);
		\draw (-0.5,0.05) -- (-0.5,1.2);
		
		\draw  (-0.3,0.05) ellipse (0.1 and 0.05);
		\draw (-0.4,0.05) -- (-0.4,1.2);
		\draw (-0.2,0.05) -- (-0.2,1.2);
		\node at (0,0.4) {$...$};
		\draw  (0.3,0.05) ellipse (0.1 and 0.05);
		\draw (0.2,0.05) -- (0.2,1.2);
		\draw (0.4,0.05) -- (0.4,1.2);
		
		\draw  (0.6,0.05) ellipse (0.1 and 0.05);
		\draw (0.5,0.05) -- (0.5,1.2);
		\draw (0.7,0.05) -- (0.7,1.2);
		\node at (-0.4,-0.2) {$\tube$};
		
		\draw  plot[smooth, tension=.1] coordinates {(0.9,0.8) (3.1,0.8) (2.6,0) (0.9,0)};
		\node at (2.4,0.4) {$\q$};
		
		\draw  plot[smooth, tension=.1] coordinates {(5,0.8) (3.3,0.8) (2.8,0) (5,0)};
		\node at (3.65,0.4) {$\p[1]$};
		
		\draw [very thin, decorate, decoration={brace, amplitude=5pt}](5,-0.3) -- (-1.4,-0.3);
		\node at (1.8,-0.7) {$\A_{\projl}$};
		
		\draw [blue] plot[smooth, tension=.2] coordinates {(-1.4,0) (-1.2,1.9) (3.1,1.6)};
		\draw [black!60!green] plot[smooth, tension=.2] coordinates {(-1.5,0) (-1.5,1.5) (-3.8,1.7)};
		\draw [black!60!green] plot[smooth, tension=.2] coordinates {(5.1,0) (5.1,1.5) (3.1,1.7)};
		
		\node [black!60!green] at (-3.2,1.3) {$\Fcal$};
		\node [blue] at (2.6,1.3) {$\Gen\tube$};
		\node [black!60!green] at (3.7,1.3) {$\Fcal[1]$};
		
		\draw [densely dotted](-1.3,1.2) -- (-1.3,1.5);
		\draw [densely dotted](-1.1,1.2) -- (-1.1,1.5);
		\draw [densely dotted](-1,1.2) -- (-1,1.5);
		\draw [densely dotted](-0.8,1.2) -- (-0.8,1.5);
		\draw [densely dotted](-0.7,1.2) -- (-0.7,1.5);
		\draw [densely dotted](-0.5,1.2) -- (-0.5,1.5);
		\draw [densely dotted](-0.4,1.2) -- (-0.4,1.5);
		\draw [densely dotted](-0.2,1.2) -- (-0.2,1.5);
		\draw [densely dotted](0.2,1.2) -- (0.2,1.5);
		\draw [densely dotted](0.4,1.2) -- (0.4,1.5);
		\draw [densely dotted](0.5,1.2) -- (0.5,1.5);
		\draw [densely dotted](0.7,1.2) -- (0.7,1.5);
		\end{tikzpicture}
		\caption{Auslander-Reiten quiver of the heart $\A_\Ucal = \G(\Gen\tube,\Fcal)$, for $\Ucal= \projl$.}
	\end{figure}
\end{itemize}

\begin{rem}
	Notice that there are inclusions $\Fcal \subset \C_\Ucal \subset \C$ and $\Qcal \subset \Qcal_\Ucal \subset \Gen\tube$.
\end{rem}

The following Theorem gives a complete description of the simple objects in the hearts $\A_\Ucal$ described above.

\begin{thm}\label{thm:simplesheart}
	Let $\Ucal\subseteq \projl$.	The complete list of simple objects in the heart $\A_\Ucal = \G(\Qcal_\Ucal,\C_\Ucal)$ is:
	\begin{itemize}[noitemsep]
		\item $\{ S_x \mid S_x \text{ simple regular in } \bigcup_{x\in \Ucal} \tube_x\}\cup\{ S_x[1]\mid S_x \text{ simple regular in } \bigcup_{x\notin \Ucal} \tube_x\}$, whenever $\Ucal\neq \projl$.
		\item $ \{ S_x \mid S_x \text{ simple regular} \}\cup\{G[1]\} $, whenever $\Ucal= \projl$.
	\end{itemize}
	Moreover, for $\Ucal \subseteq \projl$, we have that:
	\begin{itemize}[noitemsep]
		\item The short exact sequence $$0 \lra S_x \lra S_x^{-\infty}[1] \lra S_x^{-\infty}[1] \lra 0$$ is a minimal injective coresolution in $\A_\Ucal$ for a simple regular module $S_x \in \bigcup_{x\in \Ucal} \tube_x$.
		\item The short exact sequence $$0 \lra S_x[1] \lra S_x^{\infty}[1] \lra S_x^{\infty}[1] \lra 0$$ is a minimal injective coresolution in $\A_\Ucal$ for a simple regular module $S_x \in \bigcup_{x\notin \Ucal} \tube_x$.
		\item If $\Ucal = \projl$, then the object $G[1]$ is simple injective in $\A_\Ucal$.
	\end{itemize}
\end{thm}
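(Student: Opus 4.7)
The plan is to apply Theorem~\ref{simples}: the simple objects of the heart $\A_\Ucal = \G(\Qcal_\Ucal,\C_\Ucal)$ are precisely the torsion almost torsionfree objects $Q \in \Qcal_\Ucal$ together with the shifts $Y[1]$ of torsionfree almost torsion objects $Y \in \C_\Ucal$. So the task reduces to classifying these two types of modules for the specific torsion pair $(\Qcal_\Ucal,\C_\Ucal)$, and then computing their injective envelopes via Proposition~\ref{injenv}.

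The main structural input is Auslander--Reiten theory on the Kronecker quiver: every simple regular $S_x$ has a unique proper submodule isomorphic to $P_1$ and a unique proper quotient isomorphic to $Q_1$, and there are no nonzero morphisms from $\tube \cup \q$ to $\p$. The latter forces $P_1 \in \C_\Ucal$ for every $\Ucal \subseteq \projl$, while $Q_1 \in \q \subseteq \Qcal_\Ucal$ always, yielding condition~(i) of (the dual of) Definition~\ref{def:tfat} for $S_x$ depending on whether $x$ lies in $\Ucal$ or not. For condition~(ii) on the torsion side, given $0 \to A \to B \to S_x \to 0$ with $B \in \Qcal_\Ucal$, I would decompose $A$ via its torsion--torsionfree sequence, push out along $A \twoheadrightarrow A/tA$ to obtain $0 \to A/tA \to B/tA \to S_x \to 0$ with $B/tA \in \Qcal_\Ucal$ and $A/tA \in \C_\Ucal$, and then apply $\Hom_\La(-,F)$ for each $F \in \C_\Ucal$: the vanishing $\Hom_\La(B/tA,F) = 0$ combined with a direct analysis of the connecting map into $\Ext^1(S_x,F)$ via the explicit summands of $C_\Ucal$ would force $A/tA = 0$. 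The dual argument handles $S_x$ with $x \notin \Ucal$. For $\Ucal = \projl$, the candidate $G$ satisfies $G \in \rcirc{\tube} = \Fcal$ since the generic module admits no nonzero maps from finite length regulars over a tame hereditary algebra, and every proper quotient of $G$ lies in $\Gen\tube$: $G$ has finite length over its endomorphism division ring $\End_\La G$, so any proper nonzero quotient has strictly smaller $\End_\La G$-length and must be a direct limit of finite length regular modules.

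Completeness of the list then follows from Lemma~\ref{congtf/t} together with a case analysis over the indecomposable types of $\LMla$ (preprojective, preinjective, other regulars, Pr\"ufer, adic, generic), which rules out every other candidate. Finally, the injective coresolutions come from Proposition~\ref{injenv}: for $x \in \Ucal$, the canonical ``shift'' short exact sequence $0 \to S_x^{-\infty} \to S_x^{-\infty} \to S_x \to 0$ is a special $\C_\Ucal$-cover, since its middle term lies in $\Prod C_\Ucal \subseteq \C_\Ucal$ and its kernel $S_x^{-\infty}$ lies in $\rperp{\C_\Ucal}$ as a summand of the cotilting module $C_\Ucal$ (recall $\lperp{C_\Ucal} = \C_\Ucal$). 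Proposition~\ref{injenv}(2) then gives the injective envelope $S_x \to S_x^{-\infty}[1]$, and iterating on the cokernel in $\A_\Ucal$---which is again $S_x^{-\infty}[1]$, as one reads off by rotating the triangle associated to the shift sequence---produces the claimed two-step coresolution. The dual argument using $0 \to S_x \to S_x^{\infty} \to S_x^{\infty} \to 0$ handles $x \notin \Ucal$, and for $\Ucal = \projl$ the module $G[1]$ is its own injective envelope since $G$ already lies in $\rperp{\Fcal}$, hence it is simple injective in $\A_{\projl}$. The main obstacle I expect is the clean verification of condition~(ii) of the almost torsion definitions in the infinite-dimensional setting, particularly for the generic module $G$, where the argument relies on the definability of $\Fcal$ and the precise interaction between $G$ and the Pr\"ufer/adic modules inside $\LMla$.
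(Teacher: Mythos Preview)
Your overall strategy---reduce to Theorem~\ref{simples}, classify the torsion-almost-torsionfree and torsionfree-almost-torsion modules, then apply Proposition~\ref{injenv}---matches the paper exactly, and your treatment of the injective coresolutions is essentially the same as the paper's. However, there are genuine gaps in two of the intermediate steps.

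\textbf{Completeness of the list.} Your proposed ``case analysis over the indecomposable types of $\LMla$ (preprojective, preinjective, other regulars, Pr\"ufer, adic, generic)'' does not work: over the Kronecker algebra there is no classification of arbitrary indecomposable modules---only the finite-dimensional and the pure-injective ones are classified, and an almost-torsion module is not a~priori pure-injective. The paper avoids this entirely. For the converse of the torsion-almost-torsionfree direction it simply observes that any such $X$ lies outside $\C_\Ucal = \rcirc{(\bigcup_{x\in\Ucal}\tube_x)}$, so some simple regular $S_x$ with $x\in\Ucal$ maps nontrivially to $X$, and then Lemma~\ref{congtf/t} forces $X\cong S_x$. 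For the torsionfree-almost-torsion converse (case $\Ucal\neq\projl$) the paper argues by contradiction: if no simple regular in the complementary tubes maps to $X$, then $X\in\rcirc{\tube}$, and one uses the structural embedding $0\to X\to G^{(\alpha)}\to Z\to 0$ from \cite{RR} together with property~(ii) to successively rule out $X\in\p$, show $X$ is indecomposable, show $X\in\lcirc{\tube}$, and finally conclude $X\in\Add G$, which was already shown to be impossible when $\Ucal\neq\projl$.

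\textbf{Condition~(ii) and the generic module.} Your $\Hom_\La(-,F)$ approach for condition~(ii) only yields an injection $\Hom_\La(A/tA,F)\hookrightarrow\Ext^1_\La(S_x,F)$, and the target does \emph{not} vanish for $F=C_\Ucal$ (e.g.\ $\Ext^1_\La(S_x,S_x^{-\infty})\neq 0$), so ``analysing the connecting map'' needs a real argument you have not supplied. The paper instead pushes out, observes the resulting middle term has no preinjective summands, and then uses that $\C\cap\Gen\tube$ is an exact abelian subcategory in which objects are unions of modules from $\add\tube$ \cite[3.4, 3.5]{RR}; this forces the torsionfree part of $A$ to lie simultaneously in $\C_\Ucal$ and in $\add(\bigcup_{x\in\Ucal}\tube_x)$, a contradiction. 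Finally, your argument that proper quotients of $G$ lie in $\Gen\tube$ via ``$\End_\La G$-length'' is flawed: a $\La$-submodule of $G$ is typically not an $\End_\La G$-submodule, so the quotient carries no $\End_\La G$-action. The paper's fix is to pass to the torsionfree part $\overline{B}$ of the quotient, show $\overline{B}\in\Fcal\cap\lcirc{\tube}=\Add G$, and \emph{then} argue inside $\Add G$ (which is equivalent to modules over a simple artinian ring) that $G\to\overline{B}$ would be a split monomorphism.
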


\begin{proof}
	To prove the claim, according to Theorem \ref{simples}, we show that:
	\begin{enumerate}[noitemsep, label=\rm(\arabic*)]
		\item If $\Ucal \neq \varnothing$, then $X\in\Qcal_\Ucal$ is almost torsionfree if and only if $X$ is simple regular in $\bigcup_{x \in \Ucal} \tube_x$.
		\item If $\Ucal \neq \projl$, then $X \in \C_\Ucal$ is almost torsion if and only if $X$ is simple regular in $\bigcup_{x \notin \Ucal} \tube_x$.
		\item If $\Ucal = \varnothing$, there are no torsion, almost torsionfree modules.
		\item If $\Ucal = \projl$, then $X \in \C_\Ucal$ is almost torsion if and only if $X\cong G$.
	\end{enumerate}
	
	First of all, observe that the indecomposable modules in $\C_\Ucal$ are the modules in $\p\cup\bigcup_{x\not\in \Ucal} \tube_x$ and the indecomposable modules in $\Qcal_\Ucal$ are the modules in $\bigcup_{x\in \Ucal} \tube_x\cup\q$.
	
	(1): Let $S_x$ be simple regular in $\bigcup_{x\in \Ucal} \tube_x$. Then $S_x\in\Qcal_\Ucal$ is torsion, almost torsionfree:
	
	\begin{enumerate}[noitemsep, label=\rm(\roman*)]
		\item All proper subobjects of $S_x$ are preprojective, hence in $\C_\Ucal$.
		\item Let $0\to A\to B\to S_x\to 0$ be an exact sequence with $B\in\Qcal_\Ucal$. Consider the canonical exact sequence $0\to A'\to A\to \overline{A}\to 0$ with $A'\in\Qcal_\Ucal$ and $\overline{A}\in\C_\Ucal$, and assume that $\overline{A}\not=0$. In the push-out diagram
		\[\xymatrix{
			0 \ar[r] & A \ar[d]^{} \ar[r]^{f} & B\ar[d]^{\alpha} \ar[r]^{g} & S_x \ar[d]^{=} \ar[r]{} & 0 \\
			0 \ar[r] & \overline{A} \ar[r]^{f'} & B'\ar[r]^{g'} \pocorner & S_x\ar[r]{} & 0
		}
		\]
		the map $\alpha$ is surjective and thus $B'\in\Qcal_P$. Notice that $B'$ cannot have nonzero direct summands in $\q$, because they would be submodules of $\Ker g'\cong\overline{A}\in\C_\Ucal$. So we conclude that $B'\in \C \cap \Gen\tube$ and therefore it is union of modules belonging to $\add\tube$, cf.~\cite[3.4 and 3.5]{RR}. Moreover, $B' \in \Qcal_\Ucal$, therefore we can say that it is union of modules belonging to $\add\left(\bigcup_{x\in \Ucal} \tube_x\right)$. But then, since $\C \cap \Gen\tube$ is an exact abelian subcategory of $\LMla$, also $\Ker g'\cong\overline{A}\in\C_\Ucal$ must be union of modules belonging to $\add\left(\bigcup_{x\in \Ucal} \tube_x\right)$, a contradiction. This proves that $A\in\Qcal_\Ucal$.
		
		Conversely, if $X\in\Qcal_\Ucal$ is almost torsionfree, then $X\notin\C_\Ucal=\rcirc{(\bigcup_{x\in \Ucal} \tube_x)}$, so there is a simple regular module $S_x\in\bigcup_{x\in \Ucal} \tube_x$ with a non-zero map $f:S_x\to X$. But then $S_x\cong X$ by Lemma \ref{congtf/t}.
	\end{enumerate}
	
	(2): We now turn to the case $\Ucal\not=\projl$, which is somehow dual to case (1), and pick a simple regular module $S_x\in\bigcup_{x\not\in \Ucal} \tube_x$. 
	First of all, observe that the generic module $G\in\rcirc{\tube} \subset\C_\Ucal$ is not almost torsion, indeed: consider the exact sequence $0\to S_x^{-\infty}\to G^{(I)} \to S_x^\infty \to 0$, from \cite[Lemma 2.4]{BK}, for a set $I$. This sequence yelds a nonzero map $f \colon G \to S_x^\infty$, defined as the composite $G \hookrightarrow G^{(I)} \to S_x^\infty$. It is clear that $\Img f \in \C_\Ucal$, since $\C_\Ucal$ is closed under subobjects, and therefore $G$ has a proper quotient in $\C_\Ucal$. 
	Then $S_x \in \C_\Ucal$ is torsionfree, almost torsion:
	\begin{enumerate}[noitemsep, label=\rm(\roman*)]
		\item All proper quotients of $S_x$ are in $\add \q$.
		
		\item Let $0 \to S_x \to B \to C \to 0$ be an exact sequence with $B \in \C_\Ucal$. Consider the canonical exact sequence $0 \to C' \to C \to \overline{C} \to 0$ with $C' \in \Qcal_\Ucal$ and $\overline{C} \in \C_\Ucal$ and assume $C' \neq 0$. In the pullback diagram:
		\[\xymatrix
		{0 \ar[r] & S_x \ar@{=}[d] \ar[r]^{f'}&B'\ar[d]^{\alpha}\ar[r]^{g'}& C' \ar[d]\ar[r]{}&0
			\\
			0\ar[r] & S_x \ar[r]^{f}&B\ar[r]^{g}&C\ar[r]{}&0
		}
		\]
		$\alpha$ is injective and then $B' \in \C_\Ucal$. Moreover, $B' \in \Gen\tube$ indeed, consider the following diagram:
		\[ \xymatrix {
			0 \ar[r] & t(B') \ar[r]^{i} & B' \ar[r]^{\pi} \ar@{=}[d] & \overline{B'} \ar[r] & 0 \\
			0 \ar[r] & S_x \ar[r]^{f'} & B' \ar[r]^{g'} & C' \ar[r] & 0
		}
		\]
		where the upper row is the canonical short exact sequence coming from the torsion pair $(\Gen\tube,\Fcal)$. Suppose that $\overline{B'} \neq 0$. The composition $\pi f' = 0$, because $S_x \in \Gen\tube$ and $\overline{B'} \in \Fcal$. From this we obtain that $\Ker g' = \Img f' \subseteq \Ker \pi$, therefore there is a nonzero map from $C'$ to $\overline{B'}$. This is a contradiction, since $C' \in \Qcal_\Ucal \subseteq \Gen\tube$ and $\overline{B'} \in \Fcal$. Hence $\overline{B'} = 0$ and $B' \in \Gen\tube$.
		
		So we have that $B' \in \C_\Ucal \cap \Gen\tube$ is a union of modules belonging to $\add\left(\bigcup_{x \notin \Ucal} \tube_x\right)$ and therefore also $C' \cong B'/S_x \in \Qcal_\Ucal$ must be of this form. This is a contradiction. This proves that $C \in \C_\Ucal$.
	\end{enumerate}
	
	For the converse implication, it suffices to prove that, for any almost torsion module $X\in\C_\Ucal$, there is a simple regular module $S_x \in \bigcup_{x\not\in \Ucal} \tube_x$ with $\Hom_\La(S_x,X)\not=0$. Indeed, this would imply $X \cong S_x$ by Lemma \ref{congtf/t}.
	
	So, let us assume that such $S_x$ does not exist. Then  $X\in\rcirc{\tube}$, and by \cite[6.6]{RR} there is a short exact sequence $$ 0\to X\stackrel{f}{\to} G^{(\alpha)}\to Z\to 0$$ where $G^{(\alpha)} \in \C_\Ucal$ and thus, by property (ii), also $Z$ belong to  $\C_\Ucal$. Moreover, $X\notin\p$, because every $P\in\p$ is the first term of a short exact sequence $0\to P\to P'\to S_x \to 0$ with $P'\in\p\subset\C_\Ucal$ and a simple regular module $S_x \in \bigcup_{x\in \Ucal} \tube_x\subset\Qcal_\Ucal$. 
	Furthermore, $X$ is indecomposable, since if it is not it would have a proper quotient in $\C_\Ucal$. Moreover $X \in \lcirc{\p}$, because if there is a nonzero map $X \to P$, with $P\in\p$, then $X$ has a direct summand in $\p$, but $X$ is indecomposable, so $X \in \p$, contradiction. It follows that $X\in\lcirc{\tube}$. In fact, any $0\not=h:X\to S_x$ with $S_x$ simple regular would have to be a proper epimorphism with $S_x\in\Qcal_\Ucal$. But then $\Ext^1_\La(Z,S_x)\cong D\Hom_\La(S_x,Z)=0$, and $h$ would factor through $f$, contradicting the fact that $\Hom_\La(G,S_x)=0$. So we conclude that  $X$  belongs to $\rcirc{\tube} \cap \lcirc{\tube} = \Add G$ (see \cite[4]{RR}). But then $X\cong G$, which is impossible as we have observed above.
	
	(3): Assume $\Ucal=\varnothing$. Then $\Qcal_\Ucal=\Add\q$, and every $Q\in\q$ is the end-term of a short exact sequence $0 \to S_x \to Q' \to Q \to 0$, where $Q'\in\q$ and $S_x\notin\q$ is simple regular, so $Q$ is not almost torsionfree. Moreover, all simple regular modules are torsionfree, almost torsion by (2).
	
	(4): It remains to check the case $\Ucal=\projl$. Then the torsion pair is $(\Gen\tube,\Fcal)$, where $\Fcal=\rcirc{\tube}$ and $G$ is torsionfree, almost torsion. Indeed, $G \in \Fcal$, and 
	
	\begin{enumerate}[noitemsep, label=\rm(\roman*)]
		\item If $g:G\to B$ is a proper epimorphism, and $0\to B'\to B\to \overline{B}\to 0$ is the canonical exact sequence with $B'\in\Gen\tube$ and $0\not=\overline{B}\in\Fcal$, then $\overline{B}\in\Fcal\cap\lcirc{\tube}=\Add G$. So $G \stackrel{g}{\to} B \to \overline{B}$ is a morphism over a simple artinian ring $Q$, which is Morita equivalent to $\End_\La(G)$ (see \cite{CB} and \cite[1.7 and 1.8]{AS2} for the details). Thus, $G \stackrel{g}{\to} B \to \overline{B}$ a split monomorphism, which is a contradiction. Hence $ B \in \Gen\tube $.
		
		\item If $0\to G\stackrel{f}{\to} B\to C\to 0$ is an exact sequence with $ B \in \Fcal$, applying $\Hom_\La(S_x,-)$, with $S_x$ a simple regular module, we obtain an exact sequence: $$\Hom_\La(S_x,B)\to \Hom_\La(S_x,C)\to\Ext^1_\La(S_x,G)\cong D\Hom_\La(G,S_x)$$ where the first and third term are zero, showing that $C\in\Fcal$.
	\end{enumerate}
	Conversely, if $X\in\Fcal$ is almost torsion, then $X$ is cogenerated by $G$, hence $\Hom_\La(X,G)\not=0$, and $X\cong G$ by Lemma \ref{congtf/t}.
	
	Finally, to prove that the injective coresolutions have the stated form, we apply Proposition \ref{injenv} first to the special $\C_\Ucal$-cover $0\to S_x^{-\infty}\to  S_x^{-\infty}\to S_x\to 0$  of $S_x\in \bigcup_{x\in \Ucal} \tube_x$ and then to the special $\rperp{\C_\Ucal}$-envelope $0\to S_x\to S_x^{\infty}\to S_x^\infty\to 0$  of $S_x\in \bigcup_{x\notin \Ucal} \tube_x$.
\end{proof}

\begin{rem}
	Recall from \cite[Theorem 5.2]{KST} that $\A_\Ucal$ is hereditary only if $(\Qcal_\Ucal,\C_\Ucal)$ is a split torsion pair. But if $\Ucal$ is infinite and $S_x$ is a simple regular in the tube $\tube_x$, then by \cite[Proposition 5]{R} there is a non-split exact sequence $0 \to \bigoplus_{x\in \Ucal} S_x \to \prod_{x\in \Ucal} S_x \to G^{(\alpha)} \to 0$ with $\bigoplus_{x\in \Ucal} S_x\in\Qcal_\Ucal$ and $G^{(\alpha)}\in\rcirc{\tube}\subset\C_\Ucal$.
\end{rem}

\section{The Atom Spectrum}
\label{ch:aspec}

In this section we introduce the notion of atom spectrum for a Grothendieck category. This is a generalization of the prime spectrum for a commutative ring and, in a similar fashion, it is endowed with a topological space structure. The notion of atom spectrum has been introduced by Kanda in \cite{Kan12}, in the more general setting of abelian categories. Moreover, some properties will appear in a forthcoming work by V\'amos and Virili, \cite{VamVir}.

\begin{defn}
	Let $\G$ be an Grothendieck category and let $X$ be an object of $\G$. $X$ is said to be \emph{monoform} if, given any subobject $H$ of $X$ and a morphism $\varphi \in \Hom_\G(H,X)$, $\varphi$ is non-zero if and only if it is a non-zero monomorphism.
\end{defn}

\begin{lemma}\cite[Lemma 2.10]{VamVir}
	Let $X \in \G$. The following conditions are equivalent:
	\begin{enumerate}[noitemsep, label=\rm(\roman*)]
		\item $X$ is monoform.
		\item for any non-zero subobject $H \subseteq X$, the unique object isomorphic to both a subobject of $X$ and to a subobject of $X/H$ is the zero object.
		\item $X$ is uniform and, for any non-zero subobject $H \subseteq X$, the unique object isomorphic to both a subobject of $H$ and to a subobject of $X/H$ is the zero object.
	\end{enumerate}
\end{lemma}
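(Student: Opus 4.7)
The plan is to prove the equivalences via the cycle $(i) \Leftrightarrow (ii) \Leftrightarrow (iii)$, with the single technical tool being a pullback construction. Namely, given any subobject $Y \hookrightarrow X/H$ with $H \subseteq X$ nonzero, pulling back along the quotient $\pi \colon X \to X/H$ yields a subobject $P := X \times_{X/H} Y$ of $X$ fitting into a short exact sequence $0 \to H \to P \to Y \to 0$. This lets me translate subobjects of $X/H$ into subobjects of $X$ containing $H$, which can then be fed into the monoform property.

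For $(i) \Rightarrow (ii)$, suppose $Y$ embeds into $X$ via some $\alpha$ and into $X/H$. Forming $P$ as above, the composition $P \twoheadrightarrow Y \xrightarrow{\alpha} X$ is a morphism from the subobject $P \subseteq X$ into $X$, so by monoformness it is either zero or a monomorphism. The latter is impossible since $H \neq 0$ lies in its kernel; hence the composition is zero, and as $\alpha$ is mono this forces $Y = 0$. Conversely, for $(ii) \Rightarrow (i)$, given a nonzero $\varphi \colon H \to X$ with kernel $K$, the image $H/K$ embeds in $X$ through $\varphi$ and in $X/K$ through the natural map; if $K \neq 0$, applying $(ii)$ to the nonzero subobject $K$ yields $H/K = 0$, i.e.\ $\varphi = 0$, a contradiction.

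The easy direction $(ii) \Rightarrow (iii)$ follows at once: if $X_1, X_2 \subseteq X$ are nonzero with $X_1 \cap X_2 = 0$, then $X_1 \hookrightarrow X/X_2$ exhibits $X_1$ as a common subobject of $X$ and $X/X_2$, forcing $X_1 = 0$ by $(ii)$; and the second clause of $(iii)$ is immediate since subobjects of $H$ are subobjects of $X$. The reverse direction $(iii) \Rightarrow (ii)$ is the most delicate step, and I expect it to be the main obstacle. Starting from $\alpha \colon Y \hookrightarrow X$ and $\beta \colon Y \hookrightarrow X/H$ with $Y, H$ both nonzero, I again form $P$ via the pullback and apply uniformity twice: first to obtain $Z := \alpha(Y) \cap P \neq 0$, and then to deduce $W := Z \cap H \neq 0$ (otherwise the nonzero subobjects $Z$ and $H$ would intersect trivially in $X$). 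Transporting $W$ through $\alpha^{-1}$ to a subobject $W' \subseteq Y$ and then through $\beta$ to $\beta(W') \subseteq X/H$ exhibits the same nonzero object $W \cong W' \cong \beta(W')$ as a subobject of both $H$ and $X/H$, contradicting $(iii)$. The only real subtlety is keeping the three incarnations of $Y$ straight; once this bookkeeping is done, the argument is a direct diagram chase in the Grothendieck category $\G$.
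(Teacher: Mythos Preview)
The paper does not supply its own proof of this lemma: it is stated with a citation to \cite[Lemma 2.10]{VamVir} and left unproved. So there is nothing in the paper to compare your argument against.

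That said, your proof is correct and self-contained. The pullback construction $P = X \times_{X/H} Y$ is exactly the right tool, and each implication goes through as you describe. In $(i)\Rightarrow(ii)$ the key point that $P \twoheadrightarrow Y \xrightarrow{\alpha} X$ has $H$ in its kernel is clean; in $(ii)\Rightarrow(i)$ the use of $H/K \hookrightarrow X/K$ is the natural move. For $(iii)\Rightarrow(ii)$ your double application of uniformity (first to get $Z = \alpha(Y)\cap P \neq 0$, then $W = Z \cap H \neq 0$) is the standard way to produce a common nonzero subobject of $H$ and $X/H$, and your bookkeeping of the isomorphic copies $W \cong W' \cong \beta(W')$ is sound. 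One small stylistic remark: in $(iii)\Rightarrow(ii)$ you do not actually need the pullback $P$ at all, since uniformity already gives $\alpha(Y) \cap H \neq 0$ directly; but your route works just as well.
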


Any simple object is monoform. We say that two monoform objects $X$ and $X'$ in $\G$ are \emph{atom-equivalent} if there exist a nonzero $H \in \G$ such that $X \supseteq H \subseteq X'$. Clearly, two non isomorphic simple objects are not atom-equivalent. 

\begin{ex}
	Let $\La$ be the Kronecker algebra. As mentioned before, the two simple $\La$-modules $P_1$ and $Q_1$ are monoform. On the contrary, the generic module $G$ is not monoform, indeed we have a short exact sequence $0 \to P_1 \to G \to \bigoplus S_\infty \to 0$ (see \cite[Theorem 6.1]{RR}) and $P_1$ is a submodule of any Pr\"ufer module.
\end{ex}

\begin{lemma}\cite[Lemma 5.8]{Kan12}\cite[Lemma 2.13]{VamVir}\label{lemma:atomeq}
	Two monoform objects $X$ and $X'$ are atom-equivalent \iff $E(X) \cong E(X')$.
\end{lemma}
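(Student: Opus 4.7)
The plan is to use the characterization that a monoform object is uniform (part (iii) of the preceding lemma), together with the standard fact that in a Grothendieck category the injective envelope is an essential injective extension and any two essential extensions to injectives yield isomorphic envelopes.

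For the forward direction, suppose $X$ and $X'$ are atom-equivalent, witnessed by a nonzero $H$ with monomorphisms $H\hookrightarrow X$ and $H\hookrightarrow X'$. Since $X$ is monoform it is uniform, so every nonzero subobject of $X$ is essential; in particular $H$ is essential in $X$. As $X$ is essential in $E(X)$, the composition shows $H$ is essential in $E(X)$, so $E(X)$ is an injective envelope of $H$. The same reasoning applied to $X'$ gives that $E(X')$ is an injective envelope of $H$. By uniqueness of injective envelopes in a Grothendieck category, $E(X)\cong E(X')$.

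For the reverse direction, suppose $E(X)\cong E(X')$ and fix such an isomorphism, so that we may regard both $X$ and $X'$ as essential subobjects of a common injective object $E$. Now $X$ is essential in $E$, so for the nonzero subobject $X'\subseteq E$ we must have $X\cap X'\neq 0$. Setting $H=X\cap X'$ produces a nonzero common subobject of $X$ and $X'$, exhibiting the atom-equivalence.

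The argument is essentially a two-line consequence of uniformity plus the uniqueness of injective envelopes, so I do not expect a serious obstacle; the only subtlety is justifying that essentiality transfers along the chain $H\subseteq X\subseteq E(X)$, which is immediate from the definition of essential subobject (a composite of essential embeddings is essential). One should also remark that the intersection $X\cap X'$ in the reverse direction is computed inside $E$ and, because $X$ and $X'$ embed as subobjects of $E$ under the fixed isomorphism of envelopes, this intersection is genuinely a subobject of each of $X$ and $X'$ in the sense required by the definition of atom-equivalence.
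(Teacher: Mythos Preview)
Your argument is correct. The paper does not actually supply a proof of this lemma; it merely cites \cite[Lemma~5.8]{Kan12} and \cite[Lemma~2.13]{VamVir} and moves on. Your proof is the standard one behind those citations: monoform $\Rightarrow$ uniform gives essentiality of any nonzero subobject, whence a common $H$ forces $E(X)\cong E(H)\cong E(X')$; conversely, essentiality of $X$ and $X'$ in a common envelope forces a nonzero intersection. There is nothing to add or correct.
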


As proven in \cite[Proposition 2.8]{Kan12}, the atom-equivalence relation is an equivalence relation. We denote by $\atom{X}$ the class of all monoform objects atom-equivalent to the monoform object $X$, these equivalence classes are called \emph{atoms}.

\begin{defn}
	The \emph{atom spectrum} of an abelian category $\G$, denoted by $\Aspec(\G)$, is the class of all atoms in $\G$.
\end{defn}


Recall that, if $\tp = (\Tcal,\Fcal)$ is a hereditary torsion pair in $\G$, then $\Tcal$ is a localizing subcategory of $\G$. Hence, we have the so called localization sequence:
\[
\xymatrixcolsep{5pc}\xymatrix{ 
	\Tcal \ar@<.5ex>[r]^{inc} & \G \ar@<.5ex>[l]^{\Tbf} \ar@<.5ex>[r]^{\Qbf} & \G/\Tcal \ar@<.5ex>[l]^{\Sbf} 
}
\]
where $inc$ is the inclusion functor, $\Tbf$ is the left-exact torsion radical (see \cite[\S VI.3]{Ste}), the Grothendieck category $\G/\Tcal$ is the localization of $\G$ at $\Tcal$ and $\Qbf$ and $\Sbf$ are the quotient functor and the section functor, respectively, of the localization.

\begin{defn}
	Let $\tp = (\Qcal,\C)$ be a torsion pair in $\G$. An object $Y$ in $\G$ is $\tp$-\emph{cocritical} if $Y \in \C$ and all proper quotients of $Y$ are in $\Qcal$.
\end{defn}

\begin{lemma}\label{lemma:cocriticalsimple}
	Let $\tp = (\Tcal,\Fcal)$ be a hereditary torsion pair and let $\Qbf \colon \G \to \G/\Tcal$ be the quotient functor. The following are equivalent for $X \in \G$:
	\begin{enumerate}[noitemsep, label=\rm(\roman*)]
		\item $X$ is $\tp$-cocritical
		\item $\Qbf(X)$ is simple in $\G/\Tcal$ and $X \in \Fcal$
	\end{enumerate}
\end{lemma}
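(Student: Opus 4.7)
The plan is to exploit the standard properties of the Gabriel localization at the hereditary torsion class $\Tcal$: the quotient functor $\Qbf$ is exact, the section functor $\Sbf$ is a fully faithful right adjoint (so $\Qbf\Sbf \cong \mathrm{id}_{\G/\Tcal}$), and for every $X \in \G$ the unit $\eta_X \colon X \to \Sbf\Qbf(X)$ has kernel and cokernel lying in $\Tcal$. In particular, for $X \in \Fcal$ the kernel of $\eta_X$ is both torsion and a subobject of the torsionfree $X$, so $\eta_X$ is a monomorphism.

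For the direction (ii) $\Rightarrow$ (i), which is the easier one, I would assume $X \in \Fcal$ and $\Qbf(X)$ simple, and take an arbitrary nonzero subobject $0 \neq Y \subsetneq X$. By heredity, $Y \in \Fcal$, so $Y \notin \Tcal$ and hence $\Qbf(Y) \neq 0$. Applying the exact functor $\Qbf$ to $Y \hookrightarrow X$ produces a nonzero subobject $\Qbf(Y) \hookrightarrow \Qbf(X)$, which equals $\Qbf(X)$ by simplicity. Then the short exact sequence $0 \to Y \to X \to X/Y \to 0$ yields $\Qbf(X/Y) = 0$, i.e.\ $X/Y \in \Tcal$, so $X$ is $\tp$-cocritical.

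For (i) $\Rightarrow$ (ii), the fact $X \in \Fcal$ is part of the hypothesis, so I only need to prove $\Qbf(X)$ is simple. Given a nonzero subobject $N \hookrightarrow \Qbf(X)$ in $\G/\Tcal$, I would lift it to $X$ by forming the pullback $Y = X \times_{\Sbf\Qbf(X)} \Sbf N$ in $\G$, where the vertical leg is $\eta_X$ and the horizontal leg is the image under $\Sbf$ of $N \hookrightarrow \Qbf(X)$. The induced map $Y \to X$ is a monomorphism, since $\Sbf$ preserves monos and pullbacks do as well. Applying the exact functor $\Qbf$ to the square and using $\Qbf\Sbf \cong \mathrm{id}$ together with the triangle identity for $\Qbf \dashv \Sbf$, the resulting pullback in $\G/\Tcal$ becomes $\Qbf(X) \times_{\Qbf(X)} N \cong N$ along the identity, so $\Qbf(Y) \cong N$. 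Since $N \neq 0$ we have $Y \neq 0$. If $Y = X$, then $N = \Qbf(X)$; otherwise $Y$ is a proper nonzero subobject, in which case cocriticality forces $X/Y \in \Tcal$, giving $\Qbf(Y) = \Qbf(X)$ and hence $N = \Qbf(X)$ as well.

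The main technical point is the lifting step in (i) $\Rightarrow$ (ii): verifying that the pullback $Y$ really satisfies $\Qbf(Y) \cong N$ as a subobject of $\Qbf(X)$. This is a formal consequence of exactness of $\Qbf$ combined with $\Qbf\Sbf \cong \mathrm{id}$, but it relies on identifying $\Qbf(\eta_X)$ with the identity of $\Qbf(X)$ via the counit — a standard triangle identity in the Gabriel--Popescu setup. Once this lifting is in hand, the cocriticality hypothesis does the rest of the work on its own.
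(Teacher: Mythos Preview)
Your proof is correct and follows essentially the same strategy as the paper's: for (ii)$\Rightarrow$(i) the arguments are identical, and for (i)$\Rightarrow$(ii) both lift a nonzero subobject of $\Qbf(X)$ to a subobject of $X$ via the section functor and then invoke cocriticality. The only cosmetic difference is that the paper realizes this lift as the intersection $\Sbf(N)\cap X$ inside $E(X)$ (using $\Sbf\Qbf(X)\subseteq E(X)$ and essentiality of $X$ to get nonzeroness), whereas you phrase the same intersection as the pullback along the unit $\eta_X$ and read off $\Qbf(Y)\cong N$ directly from exactness of $\Qbf$ and the triangle identity.
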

\begin{proof}
	(i) $\Rightarrow$ (ii): Let $Y$ be a nonzero subobject of $\Qbf(X) \in \G/\Tcal$. Then, applying the section functor $\Sbf \colon \G/\Tcal \to \G$, we have $\Sbf(Y) \subseteq \Sbf\Qbf(X)$. Moreover, $\Sbf\Qbf(X) \subseteq E(X)$, indeed, by \cite[Proposition III.3.6]{Ga}, $\Sbf\Qbf(X) \subseteq \Sbf\Qbf(E(X)) \cong E(X)$.
	
	$X$ is essential in $E(X)$, therefore we have that $\Sbf(Y) \neq 0$ \iff $\Sbf(Y) \cap X \neq 0$, then, since $X$ is $\tp$-cocritical, $X/(\Sbf(Y) \cap X) \in \Tcal$. Applying the functor $\Qbf$ to the short exact sequence:
	\[ 0 \to \Sbf(Y) \cap X \to X \to X/(\Sbf(Y) \cap X) \to 0 \]
	we obtain that $\Qbf(X) \cong \Qbf(\Sbf(Y) \cap X) \subseteq \Qbf\Sbf(Y) = Y \subseteq \Qbf(X)$. Therefore $Y \cong \Qbf(X)$.
	
	(ii) $\Rightarrow$ (i): Let $Y$ be a nonzero subobject of $X$, then $Y \in \Fcal$ and $\Qbf(Y)$ is nonzero. Since $\Qbf$ is an exact functor and $\Qbf(X)$ is simple, $\Qbf(Y) = \Qbf(X)$. Hence, applying $\Qbf$ to the sequence $0 \to Y \to X \to X/Y \to 0$, we get $\Qbf(X/Y) = 0$ hence $X/Y \in \Tcal$.
\end{proof}

\begin{prop}\cite[Proposition 2.12]{VamVir}\label{prop:monoformcocritical}
	Let $\G$ be a Grothendieck category and let $X \in \G$. Consider the torsion pair $\tp_X = (\Tcal_X, \Fcal_X)$ cogenerated by $E(X)$. The followings are equivalent:
	\begin{enumerate}[noitemsep, label=\rm(\roman*)]
		\item $X$ is monoform.
		\item $\Qbf_{X}(X)$ is the unique simple object in $\G/\Tcal_X$, up to isomorphisms (where $\Qbf_X \colon \G \to \G/\Tcal_X$ denotes the quotient functor relative to $\tp_X$).
		\item there exists a hereditary torsion pair $\tp = (\Tcal, \Fcal)$ such that $X \in \Fcal$ and $\Qbf(X)$ is simple in $\G/\Tcal$.
	\end{enumerate}
	Moreover, if the above conditions are verified, a monoform object $Y$ is atom-equivalent to $X$ if and only if $Y$ is isomorphic to a subobject of $\Sbf_{X}\Qbf_{X}(X)$ (where $\Sbf_X \colon \G/\Tcal_X \to \G$ denotes the section functor relative to $\tp_X$).
\end{prop}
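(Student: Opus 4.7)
The plan is to establish the cycle (ii) $\Rightarrow$ (iii) $\Rightarrow$ (i) $\Rightarrow$ (ii), and then handle the atom-equivalence clause separately. The implication (ii) $\Rightarrow$ (iii) is immediate by taking $\tp = \tp_X$.

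For (iii) $\Rightarrow$ (i) I would use the equivalent reformulation of monoformity given just above, namely that $X$ is monoform iff no nonzero object embeds as a subobject of both $X$ and $X/H$ for every $0 \neq H \subseteq X$. Condition (iii) combined with Lemma \ref{lemma:cocriticalsimple} gives that $X$ is $\tp$-cocritical, so $X/H \in \Tcal$ whenever $H \neq 0$. A nonzero common subobject $Z$ of $X$ and $X/H$ would then lie in $\Fcal$ (as a subobject of $X \in \Fcal$, using that torsionfree classes are closed under subobjects) and in $\Tcal$ (as a subobject of $X/H \in \Tcal$, using that $\tp$ is hereditary), forcing $Z \in \Tcal \cap \Fcal = 0$.

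For (i) $\Rightarrow$ (ii), the inclusion $X \subseteq E(X)$ gives $X \in \Fcal_X = \Cogen E(X)$, so by Lemma \ref{lemma:cocriticalsimple} it suffices to show $\Hom_\G(X/H,E(X)) = 0$ for every nonzero $H \subseteq X$. I would argue by contradiction: given a nonzero $f \colon X/H \to E(X)$, the composite $g := f \pi \colon X \to E(X)$ with the projection $\pi \colon X \to X/H$ is nonzero with $H \subseteq \ker g$. Essentiality of $X$ in $E(X)$ makes $M := \Img(g) \cap X$ a nonzero subobject of $X$, and restricting $g$ to $g^{-1}(M) \subseteq X$ yields a nonzero morphism $g^{-1}(M) \to X$ whose kernel still contains $H \neq 0$, contradicting monoformity of $X$.

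For the atom-equivalence statement, Lemma \ref{lemma:atomeq} reduces the task to proving that a monoform $Y$ satisfies $E(Y) \cong E(X)$ iff $Y$ embeds into $\Sbf_X\Qbf_X(X)$. The key preliminary is identifying $\Sbf_X\Qbf_X(X)$ with the $\Tcal_X$-closure of $X$ inside $E(X)$, so that $\Sbf_X\Qbf_X(X) \hookrightarrow \Sbf_X\Qbf_X(E(X)) \cong E(X)$. The ``if'' direction then uses that $X$ monoform is uniform, hence $E(X)$ is indecomposable: any uniform $Y$ embedded in $E(X)$ forces $E(Y)$ to be an indecomposable injective summand of $E(X)$, so $E(Y) \cong E(X)$. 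For the converse, $E(Y) \cong E(X)$ produces $Y \hookrightarrow E(X)$, and one must show $Y \subseteq \Sbf_X\Qbf_X(X)$ by showing that $\Qbf_X(Y)$ is a nonzero subobject of $\Qbf_X(E(X)) \cong \Qbf_X(X)$, which equals $\Qbf_X(X)$ by simplicity, and transporting back through the section functor. The main obstacle I anticipate is precisely this last bookkeeping step: matching the concrete subobject $Y \subseteq E(X)$ against the $\Tcal_X$-saturation $\Sbf_X\Qbf_X(X)$ via the standard bijection between subobjects in the quotient category $\G/\Tcal_X$ and $\Tcal_X$-closed subobjects in $\G$; the remaining arguments are routine applications of Lemma \ref{lemma:cocriticalsimple}, hereditary torsion theory, and the behaviour of injective envelopes of uniform objects.
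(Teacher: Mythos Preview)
The paper does not actually prove this proposition: it is quoted verbatim from \cite[Proposition~2.12]{VamVir} and no argument is supplied. So there is no ``paper's own proof'' to compare against, and your attempt stands on its own.

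That said, two points in your write-up deserve attention. First, in (i)~$\Rightarrow$~(ii) you only establish that $\Qbf_X(X)$ is simple, via Lemma~\ref{lemma:cocriticalsimple}; the \emph{uniqueness} asserted in (ii) is never argued. It does follow, but needs a line: any simple $S$ in $\G/\Tcal_X$ has $\Sbf_X(S)\in\Fcal_X=\Cogen E(X)$, hence admits a nonzero map to $E(X)$, and applying $\Qbf_X$ embeds $S$ into $\Qbf_X(E(X))$; since $\Qbf_X(X)$ is essential in $\Qbf_X(E(X))$ one gets $S\cong\Qbf_X(X)$.

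Second, in the ``only if'' direction of the atom-equivalence clause you assert $\Qbf_X(E(X))\cong\Qbf_X(X)$. This fails in general: take $\G=\Ab$ and $X=\Z/p\Z$, so $E(X)=\Z(p^\infty)$; then $E(X)/X\cong\Z(p^\infty)$ has $\Hom(\Z(p^\infty),\Z(p^\infty))\neq 0$, hence $E(X)/X\notin\Tcal_X$ and $\Qbf_X(X)\subsetneq\Qbf_X(E(X))$. The fix is straightforward and in fact implicit in your uniqueness argument: since $E(Y)\cong E(X)$ forces $\tp_Y=\tp_X$, part (ii) applied to $Y$ gives that $\Qbf_X(Y)$ is simple; combined with $\Qbf_X(X)$ essential in $\Qbf_X(E(X))$ and both objects simple, one obtains $\Qbf_X(Y)=\Qbf_X(X)$ and hence $Y\subseteq\Sbf_X\Qbf_X(Y)=\Sbf_X\Qbf_X(X)$. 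With these two adjustments your proof goes through.
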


\begin{rem}\label{rem:monof_cocrit}
	Notice that, by Lemma \ref{lemma:cocriticalsimple}, we can say that an object $X$ is monoform \iff there exists a hereditary torsion pair $\tp = (\Tcal, \Fcal)$ such that $X$ is $\tp$-cocritical.
\end{rem}

The atom spectrum of an abelian category may not be a set. On the other hand, we have that $\Aspec(\G)$ is a set, whenever $\G$ is a Grothendieck category. This is a consequence of the following lemma.

\begin{thm}\cite[Theorem 5.9]{Kan12}\cite[Lemma 2.13]{VamVir} \label{thm:lnAspec} 
	Let $\G$ be a Grothendieck category. There is a well-defined injective map of sets:
	\[
	\xymatrix@R=1pt{
		\Aspec(\G) \ar[r] & \Spec{\G} \\
		\atom{X} \ar@{|->}[r] & E(X)
	}
	\]
If $\G$ is a locally noetherian Grothendieck category, then this map is a well-defined bijection of sets.
\end{thm}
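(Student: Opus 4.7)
The proof divides naturally into three steps: well-definedness, injectivity, and (under the locally noetherian hypothesis) surjectivity. For the first two I would invoke Lemma \ref{lemma:atomeq} directly: it asserts that two monoform objects $X, X'$ satisfy $E(X) \cong E(X')$ if and only if they are atom-equivalent, which at once makes the assignment $\atom{X} \mapsto E(X)$ independent of the chosen representative and forces distinct atoms to produce non-isomorphic injectives. The only additional point is that $E(X)$ really is an \emph{indecomposable} injective whenever $X$ is monoform, which holds because monoform objects are uniform (by the third equivalent characterisation in the lemma immediately preceding Lemma \ref{lemma:atomeq}) and the injective envelope of a uniform object is indecomposable.

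The nontrivial content is surjectivity under local noetherianity. Given an indecomposable injective $E \in \Spec{\G}$, my goal is to produce a monoform subobject $X$ of $E$; then $E(X) \cong E$ because $E$, being indecomposable injective, is uniform, so every nonzero subobject of $E$ is essential in $E$. Local noetherianity provides a nonzero noetherian subobject $N \subseteq E$, and I would look for $X$ inside $N$. The bridge is Proposition \ref{prop:monoformcocritical} together with Remark \ref{rem:monof_cocrit}: it is enough to exhibit a hereditary torsion pair $\tp = (\Tcal,\Fcal)$ in $\G$ and a subobject $X \subseteq N$ with $X \in \Fcal$ and $\Qbf(X)$ simple in $\G/\Tcal$. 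The natural candidate is $\tp_E = (\Tcal_E,\Fcal_E)$ cogenerated by $E$, with $\Tcal_E = \{M \in \G : \Hom_\G(M,E) = 0\}$; this is a hereditary torsion class because the injectivity of $E$ extends any nonzero morphism from a subobject of $M$ into $E$ to a nonzero morphism from $M$ into $E$, which gives closure under subobjects.

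In the localization $\G/\Tcal_E$, the image $\Qbf_E(E)$ is an indecomposable injective and $\Qbf_E(N)$ is a nonzero noetherian subobject of it. The main obstacle, and the bulk of the technical work, is to exhibit a simple subobject of $\Qbf_E(N)$ in $\G/\Tcal_E$: once I have a simple subobject of the form $\Qbf_E(M) \subseteq \Qbf_E(N)$ for some $M \subseteq N$, Proposition \ref{prop:monoformcocritical}(iii) identifies $M$ as monoform in $\G$ and the proof concludes. To produce such a simple subobject, I would invoke the classical Gabriel theory of prime hereditary torsion pairs: under local noetherianity, the localization $\G/\Tcal_E$ at the torsion pair cogenerated by an indecomposable injective is a ``local'' Grothendieck category with a unique simple object up to isomorphism, realised as the socle of the uniform injective $\Qbf_E(E)$. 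This socle is essential in $\Qbf_E(E)$ and hence meets the nonzero subobject $\Qbf_E(N)$ nontrivially, producing the required simple subobject and, by pulling back along $\Qbf_E$, a monoform $M \subseteq N$ with $E(M) \cong E$.
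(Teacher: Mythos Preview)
The paper does not supply its own proof of this theorem; it is quoted from \cite{Kan12} and \cite{VamVir}. Your argument for well-definedness and injectivity via Lemma~\ref{lemma:atomeq} is exactly right, and noting that monoform implies uniform (so that $E(X)$ is indecomposable) completes the first half cleanly.

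Your surjectivity argument is also correct, but the closing appeal to ``classical Gabriel theory'' compresses the essential step and deserves to be unpacked. The claim that $\G/\Tcal_E$ has a unique simple, realised as the socle of $\Qbf_E(E)$, can be justified inside the paper's own framework as follows: since $E$ is injective and torsionfree one has $\Sbf_E\Qbf_E(E)\cong E$, so $\Qbf_E(E)$ is an indecomposable injective in $\G/\Tcal_E$; and since $\Fcal_E=\Cogen E$, the object $\Qbf_E(E)$ cogenerates $\G/\Tcal_E$. Now $\Qbf_E(N)$ is nonzero noetherian (the exact functor $\Qbf_E$ preserves noetherianity), hence admits a simple quotient $S$; this $S$ embeds in the cogenerator $\Qbf_E(E)$, and uniformity of the latter forces $S\subseteq\Qbf_E(N)$. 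Setting $M=\Sbf_E(S)\cap N\subseteq N$ and using exactness of $\Qbf_E$ one checks $\Qbf_E(M)\cong S$, so $M$ is $\tp_E$-cocritical and hence monoform by Remark~\ref{rem:monof_cocrit}, with $E(M)\cong E$. So your outline goes through once this paragraph is inserted.

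For comparison, Kanda's original argument in \cite{Kan12} is more elementary and avoids localisation altogether: he shows directly that every nonzero noetherian object contains a monoform subobject, by building a strictly ascending chain of subobjects inside any putative counterexample. Your route is heavier but dovetails naturally with Proposition~\ref{prop:monoformcocritical}, which is the paper's preferred characterisation of monoform objects.
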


\begin{ex}\label{ex:aspecKronecker}
	For the Kronecker algebra $\La$, the category $\LMla$ is a locally noetherian Grothendieck category. Therefore $\Aspec(\LMla) \cong \Spec{\La}$ and the only two indecomposable injectives are the injective envelopes of the two simple $\La$-modules, $P_1$ and $Q_1$, which are monoform. So we have $\Aspec(\LMla) = \{ \atom{P_1}, \atom{Q_1}\}$.
\end{ex}

\subsection*{Topology and partial order on the atom spectrum}

Let $\G$ be a Grothendieck category. In \cite{Kan12}, the author defines a topological space structure on $\Aspec(\G)$ by means of the notion of atom support. 

\begin{defn}
	Let $M$ be an object of $\G$. Define the \emph{atom support} of $M$ as the set:
	\[ \Asupp M = \{ \alpha \in \Aspec(\G) \mid \alpha = \atom{H} \text{ for a monoform subquotient $H$ of } M\} \]
	Let $ \Phi $ be a subset of $ \Aspec(\G) $. $ \Phi $ is \emph{open} if for any atom $ \alpha \in \Phi $, there exists a monoform object $H \in \G$ such that $ \atom{H} = \alpha $ and $ \Asupp H \subset \Phi $.
\end{defn}
It is clear that, for a simple object $S$, $\Asupp S = \{\atom{S}\}$. By \cite[Proposition 3.2]{Kan13}, the set of all open sets in $\Aspec(\G)$ is given by the family $ \{\Asupp M \mid M \in \G\} $. Moreover, if $\G$ is locally noetherian, the family restricts to $ \{\Asupp M \mid M \text{ noetherian in } \G\} $.

Open singletons in $\Aspec(\G)$ are characterized by the following:
\begin{prop}\cite[Proposition 3.7]{Kan13}	\label{prop:SimpleOpenDiscreteTop}
	Let $\G$ be a locally noetherian Grothendieck category and let $ \alpha \in \Aspec(\G) $. The set $ \{ \alpha \} $ is open if and only if there exists a simple object $S$ in $\G$ such that $ \atom{S} = \alpha $.
\end{prop}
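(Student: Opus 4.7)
The plan is to prove the two implications separately: ($\Leftarrow$) by directly computing the atom support of a simple object, and ($\Rightarrow$) by extracting a simple quotient out of a witness monoform object whose support is contained in $\{\alpha\}$.

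For ($\Leftarrow$), suppose there is a simple object $S \in \G$ with $\atom{S} = \alpha$. Every simple object is monoform (the monoform condition holds trivially), and its only nonzero subquotient, up to isomorphism, is $S$ itself. Hence $\Asupp S = \{\atom{S}\} = \{\alpha\}$. By the definition of the topology on $\Aspec(\G)$, this shows that $\{\alpha\}$ is open, with the required monoform witness being $H = S$.

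For ($\Rightarrow$), assume $\{\alpha\}$ is open. Then there is a monoform object $H \in \G$ with $\atom{H} = \alpha$ and $\Asupp H \subseteq \{\alpha\}$. Since $\G$ is locally noetherian, we may pick a nonzero noetherian subobject $H' \subseteq H$. A routine check shows that any nonzero subobject of a monoform object is itself monoform (a nonzero morphism from a subobject of $H'$ to $H'$ composes with $H' \hookrightarrow H$ to a nonzero morphism into $H$, hence a monomorphism), and is atom-equivalent to the ambient object; so $H'$ is monoform, $\atom{H'} = \alpha$, and $\Asupp H' \subseteq \Asupp H \subseteq \{\alpha\}$. Because $H'$ is nonzero and noetherian, the ascending chain condition together with Zorn's lemma yields a maximal proper subobject $K \subset H'$. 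The quotient $S := H'/K$ is then simple, hence monoform, and is a subquotient of $H$, so $\atom{S} \in \Asupp H \subseteq \{\alpha\}$, which forces $\atom{S} = \alpha$, as required.

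The main obstacle I anticipate is resisting the temptation to look for a simple \emph{subobject} of $H$: such subobjects need not exist in a general locally noetherian Grothendieck category (already the ring of integers viewed as a module over itself has no simple subobjects). The correct move is to descend to a noetherian subobject $H'$ and then produce a simple \emph{quotient} via a maximal proper subobject, which is exactly where local noetherianity is used. Once this is in place, the monotonicity $\Asupp H' \subseteq \Asupp H$ and the preservation of monoformness under passing to subobjects make the conclusion immediate.
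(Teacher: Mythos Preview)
The paper does not supply its own proof of this proposition; it is quoted verbatim from \cite[Proposition 3.7]{Kan13} and used as a black box. Your argument is correct and is essentially the standard one (and the one Kanda gives): the $(\Leftarrow)$ direction is immediate from $\Asupp S=\{\atom{S}\}$, and for $(\Rightarrow)$ one passes to a noetherian subobject of the witnessing monoform $H$ and takes a simple quotient, whose atom then lies in $\Asupp H\subseteq\{\alpha\}$. One cosmetic remark: once $H'$ is noetherian, the ascending chain condition alone already produces a maximal proper subobject, so invoking Zorn's lemma is unnecessary.
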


By \cite[Proposition 3.5]{Kan13}, the atom spectrum of $\G$ is a \emph{$T_0$-space} (or \emph{Kolmogorov space}), ie. a space in which, for any distinct points $x_1$ and $x_2$ in it, there exists an open subset containing exactly one of them.
It is well known that, if $X$ is a $T_0$-space, it is possible to define a partial order $\preceq$ on it, called \emph{specialization order}, in the following way: for any $x,y \in X$, we define $ x \preceq y $ if and only if $x \in \overline{\{y\}}$, where $ \overline{\{y\}} $ is the topological closure of $ \{y\} $ in $X$. Conversely, a partially ordered set $P$ can be seen as a topological space as follows: a subset $\Phi$ of $P$ is open if and only if for any $p,q \in P$ such that $p \preceq q$, $p \in \Phi$ implies $q \in \Phi$. These correspondences are mutually inverse.

$ \Aspec(\G) $ becomes a partially ordered set defining a specialization order $ \leq $ on it. We have the following:

\begin{prop}\cite[Proposition 4.2]{Kan13}
	\label{prop:SpecialOrderASpec}
	Let $\G$ be a Grothendieck category and $ \alpha,  \beta \in \Aspec(\G)$. Then the following are equivalent:
	\begin{enumerate}[noitemsep, label=\rm(\arabic*)]
		\item $ \alpha \leq \beta $, i.e. $ \alpha \in \overline{\{\beta\}} $.
		\item If $\Phi$ is an open subset of $\Aspec(\G)$ such that $ \alpha\in\Phi $, then $ \beta \in \Phi $. In other words, $ \beta $ belongs to the intersection of all the open subsets containing $ \alpha $.
		\item For any object $M$ in $\G$ such that $ \alpha \in \Asupp M $, we have $ \beta \in \Asupp M $.
		\item For any monoform object $H$ in $\G$ such that $ \atom{H} = \alpha $, we have $ \beta \in \Asupp H $.
	\end{enumerate}
\end{prop}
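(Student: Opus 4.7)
The plan is to prove the cycle of implications $(1)\Leftrightarrow(2)\Rightarrow(3)\Rightarrow(4)\Rightarrow(2)$, relying essentially only on the definition of the topology on $\Aspec(\G)$ and the already-cited fact that the open sets are precisely the atom supports $\Asupp M$.

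First I would dispatch $(1)\Leftrightarrow(2)$ as a purely topological remark: in any topological space, the statement $\alpha\in\overline{\{\beta\}}$ is equivalent to saying that every open set meeting $\{\alpha\}$ also meets $\{\beta\}$, i.e.\ every open subset $\Phi$ containing $\alpha$ contains $\beta$. Nothing specific to $\Aspec(\G)$ is used here.

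Next, for $(2)\Rightarrow(3)$, I would invoke the description of open sets in $\Aspec(\G)$ (from the author's reference to \cite[Proposition 3.2]{Kan13} in the preceding subsection): for any object $M\in\G$, the set $\Asupp M$ is open. Hence if $\alpha\in\Asupp M$, then $\Asupp M$ is an open subset containing $\alpha$, and by $(2)$ it must contain $\beta$. The implication $(3)\Rightarrow(4)$ is immediate: setting $M=H$ and noting that $H$ is a monoform subquotient of itself, so $\alpha=\atom{H}\in\Asupp H$, gives $\beta\in\Asupp H$.

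Finally, $(4)\Rightarrow(2)$ is exactly the content of the definition of openness in $\Aspec(\G)$: if $\Phi\subseteq\Aspec(\G)$ is open and $\alpha\in\Phi$, then by definition there is a monoform object $H$ with $\atom{H}=\alpha$ and $\Asupp H\subseteq\Phi$; applying $(4)$ to this particular $H$ yields $\beta\in\Asupp H\subseteq\Phi$. This closes the cycle. There is no real obstacle in this proof; the one point to be careful about is that all four conditions must be read relative to the same topology/partial-order coming from the base of opens $\{\Asupp M\mid M\in\G\}$, and each implication uses exactly one of (a) the abstract definition of closure, (b) openness of atom supports, or (c) the local description of open sets via monoform witnesses.
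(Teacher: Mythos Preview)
Your proof is correct, and in fact the paper does not supply its own proof of this proposition at all: it is stated with attribution to \cite[Proposition 4.2]{Kan13} and used as a black box, so there is nothing in the paper to compare your argument against. Your cycle $(1)\Leftrightarrow(2)\Rightarrow(3)\Rightarrow(4)\Rightarrow(2)$ is clean and uses exactly the ingredients available in the surrounding text (the general closure characterisation, the fact from \cite[Proposition 3.2]{Kan13} that every $\Asupp M$ is open, and the definition of openness via monoform witnesses), so it would serve perfectly well as a self-contained replacement for the citation.
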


\section{Atom spectrum of hearts in $\LMla$}
\label{section:aspechearts}

Recall that $\La$ denotes the Kronecker algebra. We compute the atom spectra of the different HRS-hearts arising from the cosilting torsion pairs in $\LMla$.

As a notation for the whole section, we denote by $\bar{\Ucal}$ the complement of $\Ucal$ inside $\projl$ and $\G=\LMla$. 

First of all, notice that for the finite dimensional cosilting $\La$-modules, the HRS-hearts $\A$ arising from the corresponding torsion pairs are equivalent to $k \lmod$ or $\LMla$, as seen in \ref{ssec:cosiltinghearts}. Therefore, we have:
\begin{itemize}[noitemsep]
	\item For the simple injective $Q_1$, $\Aspec(\A) \cong \Aspec(k \lmod) = \{\atom{k}\}$.
	\item For all the other finite dimensional cosilting $\La$-modules, $\Aspec(\A) \cong \Aspec(\LMla) = \{\atom{P_1},\atom{Q_1}\}$, as seen in Example \ref{ex:aspecKronecker}.
\end{itemize}

Then, the interesting cases are the hearts arising from the infinite dimensional cosilting modules $C_\Ucal$, for $\Ucal \subseteq \projl$. We distinguish two cases: first, when $\Ucal$ is a proper subset of $\projl$ and second, when $\Ucal=\projl$.

\subsection{Case $\Ucal \subsetneq \projl$}\label{caseQcoh}

We focus on the torsion pair $\tp_\Ucal = (\Qcal_\Ucal,\C_\Ucal)$ generated by the set $\bigcup_{x \in \Ucal} \tube_x \cup \q$ and cogenerated by the cotilting module $C_\Ucal$.

By Theorem \ref{thm:lnAspec} there is an injection between $\Aspec(\A_\Ucal)$ and $\Spec{\A_\Ucal}$ which, by \cite[Proposition 4.4]{CS}, is equal to the set of the indecomposable objects in $\Prod(C_\Ucal[1])$. If $\Ucal = \varnothing$, then, by the same Theorem, this injection is actually a bijection. 

By Theorem \ref{thm:simplesheart}, the simple objects in $\A_\Ucal$ are: 
\[
\{ S_x\mid S_x \text{ simple regular in} \bigcup_{x\in \Ucal} \tube_x\}\cup\{ S_x[1]\mid S_x \text{ simple regular in} \bigcup_{x\in \bar{\Ucal}} \tube_x\}
\]
and these are monoform, clearly not atom-equivalent. 

Again by Theorem \ref{thm:simplesheart}, the injective envelope of $S_x$, for $x \in \Ucal$ is $S_x^{-\infty}[1]$, and the injective envelope of $S_x[1]$, for $x \in \bar{\Ucal}$ is $S_x^\infty[1]$.

Recall that, as in \cite[Section 8]{RR}, we can decompose the subcategory $\Tcal=\dirlim\tube$ as a coproduct of categories $\Tcal(x) = \dirlim \tube_x$. We have:

\begin{prop}\label{GmonoformAp}
	$G[1]$ is a monoform object in $\A_\Ucal = \G(\Qcal_\Ucal,\C_\Ucal)$.
\end{prop}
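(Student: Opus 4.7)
The natural strategy is to invoke Proposition~\ref{prop:monoformcocritical}(iii): it suffices to produce a hereditary torsion pair $(\Tcal,\Fcal)$ in $\A_\Ucal$ such that $G[1]\in\Fcal$ and every proper quotient of $G[1]$ in $\A_\Ucal$ lies in $\Tcal$. I would take $\Tcal$ to be the localizing subcategory of $\A_\Ucal$ generated by the simple objects exhibited in Theorem~\ref{thm:simplesheart}, namely the set
\[ \Sigma = \{S_x \mid x \in \Ucal\}\cup\{S_y[1] \mid y \in \bar{\Ucal}\}. \]

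To verify $G[1]\in\Fcal$, I compute the Hom-groups from $\Sigma$ into $G[1]$ via the HRS formulas in Lemma~\ref{heartlemma}: for $x\in\Ucal$,
\[ \Hom_{\A_\Ucal}(S_x,G[1])=\Ext^1_\La(S_x,G)\cong D\Hom_\La(G,\tau S_x)=D\Hom_\La(G,S_x)=0 \]
by Auslander--Reiten duality together with $G\in\lcirc{\tube}$ (used in the proof of Theorem~\ref{thm:simplesheart}(4)); for $y\in\bar{\Ucal}$, $\Hom_{\A_\Ucal}(S_y[1],G[1])=\Hom_\La(S_y,G)=0$ from $G\in\rcirc{\tube}$. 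Since $\Tcal$ is the localizing closure of $\Sigma$ in the Grothendieck category $\A_\Ucal$, this suffices to conclude $\Hom_{\A_\Ucal}(\Tcal,G[1])=0$, hence $G[1]\in\Fcal$.

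For cocriticality, observe that the torsion pair $(\Qcal_\Ucal,\C_\Ucal)$ in $\G$ is hereditary (subobjects of preinjectives and of tube modules remain in $\Qcal_\Ucal$), so the canonical torsion pair $(\C_\Ucal[1],\Qcal_\Ucal)$ on $\A_\Ucal$ is hereditary as well; in particular any subobject of $G[1]\in\C_\Ucal[1]$ lies in $\C_\Ucal[1]$. By Lemma~\ref{heartlemma}(2) these subobjects are precisely the $X[1]$ with $X\subseteq G$ a $\La$-submodule and $G/X\in\C_\Ucal$, and the corresponding quotient in $\A_\Ucal$ is $(G/X)[1]$. The key structural input is that $G/X\in\Gen\tube$ whenever $X\neq 0$; granting it, the condition $G/X\in\C_\Ucal$ rules out preinjective and $\Ucal$-tube constituents, and the identification $\C_\Ucal\cap\Gen\tube=\dirlim\add\bigl(\bigcup_{y\in\bar{\Ucal}}\tube_y\bigr)$ used in the proof of Theorem~\ref{thm:simplesheart} realises $G/X$ as a directed union $\dirlim R_i$ of finite-length modules in $\bar{\Ucal}$-tubes. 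Each such $R_i$ has a finite filtration by simple regulars $S_y$, $y\in\bar{\Ucal}$, so $R_i[1]$ is an iterated extension of $S_y[1]\in\Sigma$ and hence lies in $\Tcal$; passing to the filtered colimit in $\A_\Ucal$ yields $(G/X)[1]\in\Tcal$.

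The main obstacle is the structural claim $G/X\in\Gen\tube$ for $0\neq X\subsetneq G$. The idea is that a nonzero surjection $\psi\colon G/X\twoheadrightarrow F$ onto $F\in\rcirc{\tube}$ would compose with the quotient $\pi\colon G\twoheadrightarrow G/X$ to a nonzero $\psi\circ\pi\colon G\twoheadrightarrow F$ with $X\subseteq\ker(\psi\circ\pi)$. Since $\End_\La(G)$ is a division ring, no nonzero endomorphism of $G$ has nonzero kernel, ruling out $F=G$; and for $F$ finitely generated, the vanishing $\Hom_\La(G,P)=\invlim\Hom_\La(P_n,P)=0$ for every finitely generated preprojective $P$, obtained from the filtration $G=\dirlim P_n$ (cf.~\cite{RR}), rules out nonzero such $\psi$. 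The non-finitely-generated case is to be handled by reducing to these rigid building blocks via the classification of directed colimits in $\rcirc{\tube}$.
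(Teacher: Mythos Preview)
Your argument contains a genuine error: the torsion pair $(\Qcal_\Ucal,\C_\Ucal)$ is \emph{not} hereditary when $\Ucal\neq\varnothing$. Indeed, for $x\in\Ucal$ the simple regular $S_x$ lies in $\Qcal_\Ucal$, but every proper submodule of $S_x$ is preprojective and hence lies in $\C_\Ucal$ (this is precisely what is used in the proof of Theorem~\ref{thm:simplesheart}(1)(i)). Consequently the tilted pair $(\C_\Ucal[1],\Qcal_\Ucal)$ in $\A_\Ucal$ is not hereditary either: the injective envelope $S_x\hookrightarrow S_x^{-\infty}[1]$ from Theorem~\ref{thm:simplesheart} exhibits a subobject of an object of $\C_\Ucal[1]$ that lies in $\Qcal_\Ucal$. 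So your reduction ``every subobject of $G[1]$ lies in $\C_\Ucal[1]$'' is not justified by the argument you give.

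This matters for the structure of quotients. A proper epimorphism $G[1]\twoheadrightarrow C[1]$ in $\A_\Ucal$ is $h[1]$ for some $h\colon G\to C$ with $\Coker_\G h\in\Qcal_\Ucal$, but its kernel in $\A_\Ucal$ is $\mathrm{cone}(h)$, with $H^{-1}=\Ker_\G h$ and $H^0=\Coker_\G h$; in particular the kernel need not be of the form $X[1]$, and $C$ need not be $G/X$. Your parametrisation of quotients as $(G/X)[1]$ only covers the case $\Coker_\G h=0$, and you give no argument that this is exhaustive. (Your claim $G/X\in\Gen\tube$ is also left unfinished: the ``non-finitely-generated case'' is exactly where the work lies.)

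The paper avoids both problems. It does \emph{not} use the torsion class generated by all simples; instead it invokes a ready-made hereditary torsion pair $(\Tcal_{\bar{\Ucal}},\Fcal_{\bar{\Ucal}})$ in $\A_\Ucal$ from \cite[Corollary~5.8]{AK}, with $\Tcal_{\bar{\Ucal}}=\coprod_{x\in\bar{\Ucal}}\Tcal(x)[1]$, so $G[1]\in\Fcal_{\bar{\Ucal}}$ is immediate. For cocriticality it works directly with the quotient $C[1]$ and the map $h\colon G\to C$: since $G\in\lcirc{\tube}$ and $C\in\C_\Ucal$, one has $\Img h\in\C_\Ucal\cap\lcirc{\tube}$, which by \cite[Theorem~6.4]{RR} forces $\Img h\in\Add\bigl(G\oplus\bigoplus_{x\in\bar{\Ucal}}S_x^\infty\bigr)$. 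A $G$-summand would make the corestriction $G\to G$ an isomorphism (as $\End_\La G$ is a division ring), forcing $\Ker h=0$; one then argues this contradicts properness. Hence $\Img h$, and then $C$ (as an extension of $\Coker h\in\Qcal_\Ucal\subseteq\Gen\tube$ by $\Img h$), lies in $\Gen\tube\cap\C_\Ucal=\coprod_{x\in\bar{\Ucal}}\Tcal(x)$, so $C[1]\in\Tcal_{\bar{\Ucal}}$. The structural input you were groping for---that proper images of $G$ in $\C_\Ucal$ land in the $\bar{\Ucal}$-tubes---is obtained in one stroke from the Reiten--Ringel classification of $\C_\Ucal\cap\lcirc{\tube}$, rather than by an ad hoc analysis of maps out of $G$.
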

\begin{proof}
	By \cite[Corollary 5.8]{AK}, in $\A_\Ucal$ there is a hereditary torsion pair $\tp_{\bar{\Ucal}} = (\Tcal_{\bar{\Ucal}},\Fcal_{\bar{\Ucal}})$, where: 
	\[
	\Tcal_{\bar{\Ucal}} = \coprod_{x \in \bar{\Ucal}}\Tcal(x)[1] \quad \text{and} \quad \Fcal_{\bar{\Ucal}} = \Cogen C_{\bar{\Ucal}}[1].
	\]
	Clearly $G[1] \in \Fcal_{\bar{\Ucal}}$. Let $Z$ be a proper quotient of $G[1]$. Hence $Z \in \C_\Ucal[1]$, since $\C_\Ucal[1]$ is a torsion class in the heart, meaning that $Z = C[1]$ for some $C \in \C_\Ucal$. Therefore, by Lemma \ref{heartlemma}, the proper epimorphism $h[1] \colon G[1] \to C[1]$ in $\A_\Ucal$ comes from a morphism in $\LMla$, $h\colon G \to C$, with $\Coker h \in \Qcal_\Ucal$ and $\Ker h \neq 0$.  The latter comes from the fact that the exact sequence $0 \to \Ker h \to G \stackrel{\bar{h}}{\to} \Img h \to 0$ is in $\C_\Ucal$, therefore $0 \to \Ker h[1] \to G[1] \stackrel{\bar{h}[1]}{\to} \Img h[1] \to 0$ is in $\C_\Ucal[1]$ and $h[1]$ is a proper epimorphism, so $\Ker h[1] \neq 0$.
	
	In $\G$ we have the sequence:
	\[ \xymatrix{
		0 \ar[r] & \Ker h \ar[r] & G \ar[rr]^{h} \ar@{->>}[dr]_{\bar{h}} && C \ar[r] & \Coker h \ar[r] &0 \\ 
		& 		        &               & \Img h \ar@{^{(}->}[ur] & & &
	}
	\]
	
	Since $G \in \Dcal = \lcirc{\tube}$, which is a torsion class, and $C \in \C_\Ucal$, which is a torsionfree class, we have that $\Img h \in \C_\Ucal \cap \Dcal$, this means that $\Img h$ is of the form (see \cite[Theorem 6.4 and Section 8]{RR}):
	\[\Img h = G^{(\alpha)} \oplus \bigoplus_{x \in \bar{\Ucal}} {S_x^\infty}^{(\beta_x)},\]
	for some cardinals $\alpha, \beta_x$. Moreover: 
	\[\Ker h = \bigcap_\alpha \Ker \pi_G \cap \bigcap_{x \in \bar{\Ucal}}\bigcap_{\beta_x} \Ker \pi_x,\]
	where $\pi_G$ and $\pi_x$ are the corestrictions of the map $\bar{h}$ to the different copies of $G$ and $S_x^\infty$ (for $x \in \bar{\Ucal}$) respectively. Now, if $G$ is a direct summand of $\Img h$, $\pi_G$ is an isomorphism, $\Ker \pi_G =0$ and $\Ker h = 0$, but this is a contradiction, since $\Ker h \neq 0$.
	
	Therefore, $\Img h$ has no nonzero direct summands from $\Add(G)$, hence:
	\[\Img h = \bigoplus_{x \in \bar{\Ucal}} {S_x^\infty}^{(\beta_x)} \in \coprod_{x \in \bar{\Ucal}} \Tcal(x) \subseteq \Gen\tube.\]
	From the short exact sequence $ 0 \to \Img h \to C \to \Coker h \to 0$ we have that $C \in \Gen\tube \cap \C_\Ucal$, therefore $C \in \coprod_{x \in \bar{\Ucal}} \Tcal(x)$ and so $C[1] = Z \in \Tcal_{\bar{\Ucal}}$.
	
	Therefore, we have seen that any proper quotient of $G[1]$ is in $\Tcal_{\bar{\Ucal}}$, which means that $G[1]$ is $\tp_{\bar{\Ucal}}$-cocritical and so monoform by Proposition \ref{prop:monoformcocritical}.
\end{proof}
\begin{rem}
	Notice that $G[1]$ is not atom-equivalent to any simple object in $\A_\Ucal$, since, if it is atom-equivalent to one of them then, by Lemma \ref{lemma:atomeq}, their injective envelopes should be isomorphic, and this is a contradiction.
\end{rem}
We can now describe the atom spectrum of $\A_\Ucal$, as follows:
\[
\Aspec(\A_\Ucal) = \atom{G[1]} \cup \{ \atom{S_x} \mid S_x \in \bigcup_{x\in \Ucal} \tube_x \textrm{ simple regular}\} \cup \{ \atom{S_x[1]} \mid S_x \in \bigcup_{x\in \bar{\Ucal}} \tube_x \textrm{ simple regular}\}.
\]
This shows that the injection between $\Aspec(\A_\Ucal)$ and $\Spec{\A_\Ucal}$ is actually a bijection also when $\varnothing \neq \Ucal \subsetneq \projl$, and the description of the atom spectrum is complete.

The partial order in $\Aspec(\A_\Ucal)$ is the following: the singletons $\{\atom{S_x[1]}\}$ and $\{\atom{S_x}\}$ are open by Proposition \ref{prop:SimpleOpenDiscreteTop}(1). Moreover, $\atom{G[1]} \leq \atom{S_x[1]}$, for any simple regular in $\bigcup_{x \in \bar{\Ucal}} \tube_x$, indeed: let $H$ be a monoform object in $\A_\Ucal$ such that $\atom{H} = \atom{G[1]}$, then $H$ and $G[1]$ have a common nonzero subobject $Y$. For a simple regular module, we have the short exact sequence in $\A_\Ucal$: $0 \to S_x^{-\infty}[1] \to G[1] \to S_x^\infty[1] \to 0$. Let $Z$ be a pullback in the following diagram:
\[ \xymatrix{
	0 \ar[r] & Z \ar[r] \ar@{^{(}->}[d] \pbcorner & Y \ar[r] \ar@{^{(}->}[d] & Y/Z \ar[r] \ar@{^{(}->}[d] & 0 \\
	0 \ar[r] & S_x^{-\infty}[1] \ar[r] & G[1] \ar[r] & S_x^\infty[1] \ar[r] & 0 \\
}
\]
where the last vertical arrow is a monomorphism by \cite[Proposition IV.5.1]{Ste}. We have: $ Y/Z \twoheadleftarrow Y \hookrightarrow H $ and, since $S_x^\infty[1]$ is uniserial, $S_x[1] \subseteq Y/Z \subseteq S_x^\infty[1]$. This means that $\atom{S_x[1]} \in \Asupp Y \subseteq \Asupp H$ (see \cite[Propositions 3.3]{Kan12}). By Proposition \ref{prop:SpecialOrderASpec}(4), we reach the conclusion.

When $\Ucal\neq\varnothing$, let us suppose that $\atom{G[1]} \leq \atom{S_x}$, for $S_x \in \bigcup_{x \in \Ucal} \tube$, then, by Proposition \ref{prop:SpecialOrderASpec}(3), we have that $\atom{S_x} \in \Asupp G[1]$, therefore $S_x$ is atom equivalent to a subobject of a proper quotient object of $G[1]$, but in Proposition \ref{GmonoformAp} we have seen that any proper quotient of $G[1]$ belongs to the hereditary torsion class $\coprod_{x \in \bar{\Ucal}} \Tcal(x)[1]$, and so $S_x$ has to be in there too. This is a contradiction.

\subsection{Case $\Ucal = \projl$}
Consider now the torsion pair generated by $\tube$, $\tp_\Ucal = (\Gen \tube, \Fcal)$, in $\G$, which is cogenerated by the cotilting module $C_\Ucal$.

Also in this case we have an injective map between $\Aspec(\A_\Ucal)$ and $\Spec{\A_\Ucal}$, which corresponds, by \cite[Proposition 4.4]{CS}, to the set of indecomposable objects in $\Prod(C_\Ucal[1])$. From Theorem \ref{thm:simplesheart}, we know that the simple objects in $\A_\Ucal$ are $G[1]$ and $S_x$, for any simple regular $\La$-module in $\tube$, and these are all monoform objects, clearly not atom-equivalent. Their injective envelopes are, respectively, $G[1]$ and $S_x^{-\infty}[1]$, for any $x \in \projl$. Therefore we can conclude that the atom spectrum is:
\[ \Aspec(\A_\Ucal) = \atom{G[1]} \cup \{ \atom{S_x} \mid S_x \textrm{ simple regular $\La$-module}\}. \]

Therefore, $\Aspec(\A_\Ucal)$ and $\Spec{\A_\Ucal}$ are in bijection.

Notice that any singleton $\{\alpha\}$, for $\alpha \in \Aspec(\A_\Ucal)$, is open by Proposition \ref{prop:SimpleOpenDiscreteTop}(1), meaning that the topology on $\Aspec(\A_\Ucal)$ is discrete.

\section{On Gabriel categories}

Consider $\tp = (\Tcal,\Fcal)$ a hereditary torsion pair in $\G$ together with the localization sequence, as we have mentioned in Section \ref{ch:aspec}:
\[ \xymatrixcolsep{5pc}\xymatrix{ 
	\Tcal \ar@<.5ex>[r]^{inc} & \G \ar@<.5ex>[l]^{\Tbf} \ar@<.5ex>[r]^{\Qbf} & \G/\Tcal \ar@<.5ex>[l]^{\Sbf} 
} 
\]

For a set $\Xcal$ of objects in $\G$, we denote by $\chertor{\Xcal}$ the smallest hereditary torsion class containing $\Xcal$. 
\begin{defn}
	The \emph{Gabriel filtration} of $\G$ is a transfinite chain of hereditary torsion classes of $\G$
	\[ \G_{-1} \subseteq \G_0 \subseteq \G_1 \subseteq \dots \subseteq \G_\alpha \subseteq \dots \]
	where:
	\begin{itemize}[noitemsep]
		\item $\G_{-1} = \{0\}$
		\item suppose that $\alpha$ is an ordinal for which $\G_\alpha$ has already been defined. Let $\Qbf_\alpha \colon \G \to \G/\G_\alpha$ be the quotient functor. We define $\G_{\alpha +1}$ as:
		\[ \G_{\alpha +1} = \chertor{\G_\alpha \cup \{X \in \G \mid \Qbf_\alpha(X) \text{ is simple in } \G/\G_\alpha \}} \]
		\item if $\la$ is a limit ordinal, then: $$\G_\la = \chertor{\bigcup_{\alpha<\la} \G_\alpha}$$
	\end{itemize}
Let $\alpha$ be an ordinal. An object $X$ in $\G$ is said to be \emph{$\alpha$-torsion} (resp. \emph{$\alpha$-torsionfree}) \iff $X \in \G_\alpha$ (resp. $X \in \rcirc{\G_\alpha}$).
\end{defn}

\begin{rem}
	The hereditary torsion pair $\tp_\alpha = (\G_\alpha, \rcirc{\G_\alpha})$ induces the localization sequence:
	\[ \xymatrixcolsep{5pc}\xymatrix{ \G_\alpha \ar@<.5ex>[r]^{inc} & \G \ar@<.5ex>[l]^{\Tbf_\alpha} \ar@<.5ex>[r]^{\Qbf_\alpha} & \G/\G_\alpha \ar@<.5ex>[l]^{\Sbf_\alpha} } \]
	and, for simplicity, a $\tp_\alpha$-cocritical object in $\G$ will be called \emph{$\alpha$-cocritical}.
\end{rem}

Notice that, given the class $\G_\alpha$ in the Gabriel filtration, we can define $\G_{\alpha +1}$, using Lemma \ref{lemma:cocriticalsimple}, as $$\G_{\alpha +1} = \chertor{\G_\alpha \cup \{X \in \G \mid X \text{ is } \alpha\text{-cocritical} \}}. $$

\begin{rem}\label{rem:gabrielGenerator} \cite[Remark 2.12]{Vir}
	Let $G$ be a generator of the Grothendieck category $\G$. It is known that $G$ has just a set of quotient objects. One can show that: $$\G_{\alpha +1} = \chertor{\G_\alpha \cup \{ X \in \G \mid X \text{ is a quotient of } G, \Qbf_\alpha(X) \text{ is simple in } \G/\G_\alpha \} }$$
	This means that the Gabriel filtration eventually stabilizes, ie. there is a cardinal $\kappa$ such that $\G_\alpha = \G_\kappa$ for all $\alpha \geq \kappa$, just take $\kappa = \sup\{\alpha \mid \text{ there is } H \subseteq G \text{ \st } \Qbf_\alpha(G/H) \text{ is simple} \}$.
\end{rem}
By virtue of the previous Remark, it is meaningful to consider the union $\bar{\G} = \bigcup_\alpha \G_\alpha$ of all the localizing subcategories in the Gabriel filtration. 
\begin{defn}
	For an object $X \in \G$, we say that $X$ has \emph{Gabriel dimension} if there is a minimal ordinal $\delta$ such that $X \in \G_\delta$, and we write $\Gdim(X) = \delta$. If $\bar{\G} = \G$, we say that $\G$ is a \emph{Gabriel category} with Gabriel dimension $\Gdim(\G) = \kappa$, where $\kappa$ is the smallest ordinal such that $\G_\kappa = \G$.
\end{defn}

\begin{prop}
	\label{prop:locngab}
	Every locally noetherian Grothendieck category is a Gabriel category.
\end{prop}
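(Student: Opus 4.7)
The plan is to show that the union $\bar{\G} := \bigcup_\alpha \G_\alpha$ exhausts $\G$, which by definition means that $\G$ is a Gabriel category. The argument proceeds by contradiction, so first I need to verify that $\bar{\G}$ is itself a hereditary torsion class in $\G$. Closure of $\bar{\G}$ under subobjects and quotients is immediate from the corresponding closure of each individual $\G_\alpha$. Closure under extensions and arbitrary coproducts follows by the observation that one can always find a single stage $\G_\gamma$ large enough to contain all the relevant objects: for extensions take $\gamma=\max(\alpha_A,\alpha_C)$, and for coproducts take $\gamma=\sup_i\alpha_i$. By Remark \ref{rem:gabrielGenerator}, the Gabriel filtration eventually stabilizes at some ordinal $\kappa$, so $\bar{\G}=\G_\kappa$ and in particular $\G_{\kappa+1}=\G_\kappa=\bar{\G}$.

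Now suppose, for contradiction, that $\bar{\G}\neq\G$. Since $\G$ is locally noetherian, it admits a generating family of noetherian objects, and hence there exists a nonzero noetherian $X$ with $X\notin\bar{\G}$. The family $\Sigma=\{Y\subseteq X \mid X/Y\notin\bar{\G}\}$ is nonempty (it contains $0$), and the noetherian hypothesis on $X$ ensures that $\Sigma$ has a maximal element $Y_0$. Replacing $X$ by the quotient $X/Y_0$, I may assume that $X$ is noetherian, $X\notin\bar{\G}$, and every proper quotient of $X$ lies in $\bar{\G}$. I next claim that $X\in\rcirc{\bar{\G}}$: if there were a nonzero subobject $T\subseteq X$ with $T\in\bar{\G}$, then $X/T$ would be a proper quotient of $X$, hence in $\bar{\G}$, and the short exact sequence $0\to T\to X\to X/T\to 0$ together with the extension-closure of $\bar{\G}$ established above would force $X\in\bar{\G}$, a contradiction. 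Hence $X$ is $\bar{\G}$-cocritical in the sense of the paper.

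Finally, applying Lemma \ref{lemma:cocriticalsimple} to the hereditary torsion pair $(\bar{\G},\rcirc{\bar{\G}})=(\G_\kappa,\rcirc{\G_\kappa})$, it follows that $\Qbf_\kappa(X)$ is simple in $\G/\G_\kappa$, so by the very definition of the Gabriel filtration $X$ belongs to $\G_{\kappa+1}$. But stabilization gives $\G_{\kappa+1}=\G_\kappa=\bar{\G}$, whence $X\in\bar{\G}$, contradicting our choice of $X$. Therefore $\bar{\G}=\G$, proving the proposition. The main obstacle is the stabilization step: one must genuinely invoke Remark \ref{rem:gabrielGenerator} to know that $\G_{\kappa+1}=\G_\kappa$ for some $\kappa$, since without it the cocritical object $X$ would only be forced into a hypothetical next stage strictly larger than $\bar{\G}$ and the contradiction would not close.
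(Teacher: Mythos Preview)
Your proof is correct and follows essentially the same route as the paper's: stabilize the Gabriel filtration at some $\G_\kappa$ via Remark~\ref{rem:gabrielGenerator}, pick a noetherian object not in $\G_\kappa$, use the noetherian condition to pass to a maximal subobject whose quotient is $\kappa$-cocritical, and derive the contradiction $X\in\G_{\kappa+1}=\G_\kappa$. Your opening paragraph verifying directly that $\bar\G$ is a hereditary torsion class is harmless but redundant once you invoke stabilization, since $\bar\G=\G_\kappa$ is a hereditary torsion class by construction.
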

\begin{proof}
	Let $\G$ be a locally noetherian Grothendieck category and consider its Gabriel filtration:
	\[
	\{0\} = \G_0 \subseteq \G_1 \subseteq \dots \subseteq \G_{\alpha} \subseteq \dots
	\]
	By Remark \ref{rem:gabrielGenerator} this filtration stabilizes, ie. there is a cardinal $\kappa$ such that $\G_\alpha = \G_\kappa$ for all $\alpha \geq \kappa$.
	Let $\Ncal$ be the set of all the noetherian generators of $\G$. We prove that $\Ncal \subseteq \G_\kappa$. Indeed: suppose that there is $N \in \Ncal$ such that $N \notin \G_\kappa$. Consider the set:
	\[ \Ical = \{ X \subseteq N \mid N/X \notin \G_\kappa \} \]
	which is not empty since $0 \in \Ical$. Since $N$ is noetherian, $\Ical$ has a maximal element $\bar{X}$. Therefore, for any proper subobject $Y$ such that $\bar{X} \subseteq Y \subseteq N$, we have $N/Y \in \G_\kappa$. Moreover, $N/\bar{X}$  is $\kappa$-torsionfree, indeed: if it is not, it has a nonzero $\kappa$-torsion part $\Tbf_\kappa(N/\bar{X})$ such that $(N/\bar{X})/\Tbf_\kappa(N/\bar{X})$ is $\kappa$-torsionfree, but all the proper quotients of $N/\bar{X}$ are $\kappa$-torsion, by the maximality of $\bar{X}$. This means that $N/\bar{X}$ is $\kappa$-cocritical, since it is $\kappa$-torsionfree and any proper quotient of $N/\bar{X}$ is in $\G_\kappa$. Hence, $\Qbf_\kappa(N/\bar{X})$ is simple in $\G/\G_\kappa$ and then $N/\bar{X} \in \G_{\kappa +1} = \G_\kappa$. Contradiction. $\Ncal \subseteq \G_\kappa$, therefore $\G_\kappa = \G$.
\end{proof}

\begin{ex}\label{ex:gdimKron}
	The Gabriel dimension of $\LMla$, where $\La$ is the Kronecker algebra, is 0. Indeed: $\G_0 = \chertor{\{P_1, Q_1\}}$, where $P_1$ and $Q_1$ are the simple $\La$-modules, and there is a short exact sequence $0 \to P_1 \oplus P_1 \to P_2 \to Q_1 \to 0$, since $P_2$ is the projective cover of $Q_1$. Therefore the module $\La_\La$ is in $\G_0$. Meaning that $\G_0 = \LMla$.
\end{ex}

\section{Gabriel dimension of hearts in $\LMla$}
\label{sec:gdimhearts}

In this section, we prove that the different HRS-heart arising from the cosilting torsion pairs in $\LMla$ are Gabriel categories and we compute their Gabriel dimensions.
For the finite dimensional cosilting $\La$-modules, the computation is straightforward, indeed:
\begin{itemize}[noitemsep]
	\item For the simple injective $Q_1$, the heart $\A \cong k\lmod$, therefore $\Gdim(\A) = 0$, since it is a semisimple category.
	\item For all the other finite dimensional cosilting $\La$-modules, the heart $\A \cong \LMla$, as seen in Example \ref{ex:aspecKronecker}, therefore $\Gdim(\A) = 0$, by Example \ref{ex:gdimKron}.
\end{itemize}

We are left with the hearts arising from the infinite dimensional cosilting modules $C_\Ucal$, for $\Ucal \subseteq \projl$. 
\subsection{Case $\Ucal = \varnothing$}
As we have seen in Section \ref{caseQcoh}, $\A$ is equivalent to $\Qcoh\projl$ and therefore a locally noetherian Grothendieck category. By Proposition \ref{prop:locngab}, $\A$ is a Gabriel category.

We build the Gabriel filtration step by step. Set $\G_{-1}=\{0\}$. We have, by definition: $$\G_0 = \chertor{\{ X \in \A \mid X \textrm{ is simple in } \A \}} $$
therefore, using Theorem \ref{thm:simplesheart}:
$$\G_0 = \chertor{\{ S_x[1]\mid S_x \text{ simple regular $\La$-module} \} }. $$
Following \cite[Section 5.2]{AK}, $\G_0 = \Tcal[1]$, where $\Tcal= \dirlim\tube$ (see \cite[Section 3.4]{RR}), and the corresponding torsionfree class is $\rcirc{\G_0} = \dirlim(\q \cup \p[1])$.

The next step is:
$$\G_1 = \chertor{\G_0 \cup \{ X \in \A \mid \Qbf_0(X) \text{ is simple in } \A/\G_0 \}} $$
where $\Qbf_0 \colon \A \to \A/\G_0$ is the quotient functor. By Lemma \ref{lemma:cocriticalsimple}, we have that the objects in $\A$ which become simple objects in $\A/\G_0$ are precisely the $0$-cocritical objects, ie. the cocritical objects with respect to the torsion pair $\tp_0 = (\G_0,\rcirc{\G_0})$. By Proposition \ref{GmonoformAp}, $G[1]$ is monoform, hence it is $0$-cocritical via Remark \ref{rem:monof_cocrit}. We claim the following:

\begin{lemma}\label{lemma:onlycocritical}
	If $X$ is a $0$-cocritical object in $\A$, then $\Qbf_0(X) \cong \Qbf_0(G[1])$.
\end{lemma}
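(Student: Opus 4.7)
The plan is to establish the isomorphism $\Qbf_0(X) \cong \Qbf_0(G[1])$ by producing a common nonzero subobject of $X$ and $G[1]$ and then exploiting the $0$-cocriticality of both. First I would verify that $G[1]$ itself is $0$-cocritical: since $\G_0 = \Tcal[1]$ coincides with the hereditary torsion class $\Tcal_{\bar{\Ucal}}$ of Proposition~\ref{GmonoformAp} in the case $\Ucal = \varnothing$, the proof of that proposition shows directly that every proper quotient of $G[1]$ in $\A$ lies in $\G_0$. Moreover, $G[1]$ is a direct summand of the cotilting cogenerator $\Wbf[1]$ of $\A$, and so is injective; hence $E(G[1]) = G[1]$.

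The crux of the argument is the claim $E(X) \cong G[1]$. Since $X$ is $0$-cocritical with respect to the hereditary torsion pair $\tp_0 = (\G_0, \rcirc{\G_0})$, Remark~\ref{rem:monof_cocrit} gives that $X$ is monoform in $\A$, so $E(X)$ is indecomposable. Because $\G_0$ is hereditary, $\rcirc{\G_0}$ is closed under essential extensions (a standard fact: any nonzero $T \hookrightarrow E(X)$ with $T \in \G_0$ would intersect $X$ nontrivially, forcing a nonzero subobject of $X$ in $\G_0$, contradicting $X \in \rcirc{\G_0}$), so $E(X) \in \rcirc{\G_0}$, whence $E(X) \notin \G_0$. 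On the other hand, the computation of $\Aspec(\A)$ in Section~\ref{caseQcoh}, combined with the bijection $\Aspec(\A) \cong \Spec{\A}$ provided by Theorem~\ref{thm:lnAspec} (applicable since $\A \cong \Qcoh\projl$ is locally noetherian), yields that the indecomposable injectives of $\A$ are exactly $G[1]$ together with the Pr\"ufer objects $S_x^\infty[1]$ for $x \in \projl$. Since $S_x^\infty \in \Tcal$, each $S_x^\infty[1]$ lies in $\G_0$, so $G[1]$ is the only indecomposable injective outside $\G_0$. Therefore $E(X) \cong G[1]$, and Lemma~\ref{lemma:atomeq} yields $\atom{X} = \atom{G[1]}$, producing a nonzero common subobject $Z$ with $Z \hookrightarrow X$ and $Z \hookrightarrow G[1]$.

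Finally, since $X$ and $G[1]$ are both $0$-cocritical, the quotients $X/Z$ and $G[1]/Z$ lie in $\G_0$. Applying the exact quotient functor $\Qbf_0$ to the two short exact sequences
\[
0 \lra Z \lra X \lra X/Z \lra 0 \qquad\text{and}\qquad 0 \lra Z \lra G[1] \lra G[1]/Z \lra 0
\]
gives $\Qbf_0(Z) \cong \Qbf_0(X)$ and $\Qbf_0(Z) \cong \Qbf_0(G[1])$, from which the claim follows. The principal obstacle is the middle step: once one has in hand both the closure of $\rcirc{\G_0}$ under essential extensions and the classification of indecomposable injectives of $\A$ as summands of $\Wbf[1]$, the rest is essentially bookkeeping with the localization functor $\Qbf_0$.
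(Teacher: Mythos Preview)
Your proof is correct and follows essentially the same route as the paper: both arguments use that a $0$-cocritical $X$ is monoform, invoke the classification of $\Aspec(\A)$ (equivalently $\Spec{\A}$) obtained in Section~\ref{caseQcoh}, rule out the simple atoms $\atom{S_x[1]}$ via $X\in\rcirc{\G_0}$, and then finish by passing a common nonzero subobject of $X$ and $G[1]$ through the exact functor $\Qbf_0$. The only cosmetic difference is that you phrase the exclusion step as $E(X)\in\rcirc{\G_0}$ via closure under essential extensions (hence $E(X)\ncong S_x^\infty[1]$), whereas the paper argues directly that $\atom{X}=\atom{S_x[1]}$ would force $S_x[1]\hookrightarrow X$, contradicting $\Hom_\A(S_x[1],X)=0$.
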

\begin{proof}
	If $X \in \A$ is a $0$-cocritical object, then, $\Qbf_0(X)$ is simple in $\A/\G_0$ and moreover, by Remark \ref{rem:monof_cocrit}, $X$ is monoform in $\A$. 
	
	Since we have a complete description of $\Aspec(\A)$, we have that either $X \in \atom{S_x[1]}$ or $X \in \atom{G[1]}$: the first is not possible, because, if it is true, then $X$ and $S_x[1]$ have a common nonzero subobject, but $S_x[1]$ is simple, then $S_x[1] \subseteq X$, which is a contradiction since $\Hom_{\A}(S_x[1],X)=0$ (recall that $S_x[1] \in \G_0$ and $X \in \rcirc{\G_0}$). Then $X \in \atom{G[1]}$, ie. there is an object $Y \in \A$ such that $X \supseteq Y \subseteq G[1]$. This means that, in the quotient category $\A/\G_0$, $\Qbf_0(X) \supseteq \Qbf_0(Y)\subseteq \Qbf_0(G[1])$. But $\Qbf_0(X)$ is simple in $\A/\G_0$ and, by Lemma \ref{lemma:cocriticalsimple}, $\Qbf_0(G[1])$ is simple too, therefore $\Qbf_0(X) \cong \Qbf_0(G[1])$.
\end{proof}
We have $\G_1 = \chertor{\G_0 \cup G[1]}$.
\begin{thm}\label{thm:gdimA}
	If $\A = \G(\Qcal,\C)$, then $\Gdim\A = 1$.
\end{thm}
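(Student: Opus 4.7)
The plan is to show that $\G_1 = \A$; combined with the fact that $G[1] \notin \G_0 = \Tcal[1]$ (since $G \in \rcirc{\tube}$ and hence $G \notin \dirlim \tube = \Tcal$), this forces the Gabriel filtration to stabilize at step $1$ and gives $\Gdim \A = 1$. To prove the equality $\G_1 = \A$, I would argue via the associated hereditary torsion pair $(\G_1, \rcirc{\G_1})$ and show $\rcirc{\G_1} = \{0\}$.

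Suppose for contradiction that there is a nonzero $Y \in \rcirc{\G_1}$. Since $\A \cong \Qcoh\projl$ is a locally noetherian Grothendieck category, every nonzero object contains a nonzero monoform subobject, so we may pick a monoform $M \subseteq Y$. By the complete description of $\Aspec(\A)$ obtained in Section \ref{section:aspechearts} (case $\Ucal=\varnothing$), the atom $\atom{M}$ equals either $\atom{S_x[1]}$ for some simple regular $S_x$ or $\atom{G[1]}$. In the first case $M$ and $S_x[1]$ share a nonzero common subobject; simplicity of $S_x[1]$ forces $S_x[1] \hookrightarrow M \hookrightarrow Y$, which contradicts $Y \in \rcirc{\G_1}$ because $S_x[1] \in \G_0 \subseteq \G_1$. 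In the second case $M$ and $G[1]$ share a nonzero common subobject $N$, and since $\G_1$ is a hereditary torsion class containing $G[1]$, the subobject $N$ lies in $\G_1$; but then the inclusion $N \hookrightarrow Y$ again contradicts $Y \in \rcirc{\G_1}$.

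Therefore $\rcirc{\G_1}=\{0\}$, so $\G_1 = \A$, and combined with $\G_0 \subsetneq \A$ we conclude $\Gdim \A = 1$. The only potential obstacle is the existence of a monoform subobject inside an arbitrary nonzero $Y$; this is the standard fact that in a locally noetherian Grothendieck category every nonzero object has a nonempty associated atom set, which follows from the bijection $\Aspec(\A) \cong \Spec \A$ of Theorem \ref{thm:lnAspec} together with the existence of noetherian (and in particular uniform/monoform) subobjects. The rest of the argument is essentially bookkeeping that relies only on the atom spectrum computation and the simple/monoform characterization already established.
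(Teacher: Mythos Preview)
Your argument is correct. The key step---that every nonzero object of a locally noetherian Grothendieck category has a monoform subobject---is indeed standard (it is the statement that associated atoms exist; see e.g.\ Kanda's work), and once this is granted the case distinction on the atom $\atom{M}$ cleanly forces $\rcirc{\G_1}=\{0\}$.

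The paper's proof takes a different and more explicit route: it exhibits the generator $\La[1]$ as a subobject of $G^{(\alpha)}[1]$ via the Reiten--Ringel sequence $0\to\La\to G^{(\alpha)}\to M'\to 0$ (with $M'$ a sum of Pr\"ufer modules), and then invokes the fact that $\{Z\mid Z\subseteq(\La[1])^n\}$ generates $\A$ to conclude $\G_1=\A$. Your approach is more conceptual---once the atom spectrum is known, the Gabriel dimension falls out almost formally---whereas the paper's approach is constructive and does not need the full classification of monoform objects, only that $G[1]$ is $0$-cocritical. The practical trade-off is that the paper's argument transports uniformly to the cases $\varnothing\neq\Ucal\subsetneq\projl$, where the heart $\A_\Ucal$ is \emph{not} locally noetherian and the existence of monoform subobjects in arbitrary nonzero objects is no longer automatic; your argument, as written, is tied to the locally noetherian case $\Ucal=\varnothing$.
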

\begin{proof}
	Consider the algebra $\La$ as a $\La$-module. We have $\La \in \p \subseteq \Fcal = \rcirc{\tube}$. By \cite[Theorem 4.1]{RR}, we have a short exact sequence: $0 \to \La \to M \to M' \to 0$, where $M \in \Add(G)$ and $M'$ is a direct sum of Pr\"ufer modules. So, the sequence:
	\[ 0 \to \La \to G^{(\alpha)} \to M' \to 0 \]
	lies entirely in $\C$ and it gives rise to a short exact sequence in $\A$, entirely lying in $\C[1]$:
	\[ 0 \to \La[1] \to G^{(\alpha)}[1] \to M[1] \to 0. \]
	
	Now, since $G^{(\alpha)}[1] \in \G_1$ and $\G_1$ is closed under subobjects, we have $\La[1] \in \G_1$. For the same reason, the family $\{Z \mid Z \subseteq (\La[1])^n, n\in\N\}$ is contained in $\G_1$ and this, by \cite[Lemma 3.4]{CGM}, is a family of generators for $\A$, therefore $\G_1 = \A$. This means that the Gabriel filtration stops at $\G_1$, therefore $\Gdim\A = 1$.
\end{proof}

\subsection{Case $\varnothing \neq \Ucal \subsetneq \projl$} 

Set $\G_{-1}=\{0\}$ and, by definition, we have: $$\G_0 = \chertor{\{ X \in \A_\Ucal \mid X \textrm{ is simple in } \A_\Ucal \}} $$
thus, by Theorem \ref{thm:simplesheart}: $$\G_0 = \chertor{ \{ S_x\mid S_x \text{ simple regular in } \bigcup_{x\in \Ucal} \tube_x\}\cup\{ S_x[1]\mid S_x \text{ simple regular in} \bigcup_{x\in \bar{\Ucal}} \tube_x\} }. $$

It is clear that all the modules in the ray of $S_x$, for $x \in \Ucal$, are in $\G_0$, therefore the Pr\"ufer modules $S_x^\infty$, for $x \in \Ucal$, are in $\G_0$. The same argument can be applied to the ray starting at $S_x[1]$, for $x \in \bar{\Ucal}$, therefore $S_x^\infty[1] \in \G_0$, for $x \in \bar{\Ucal}$.

The next torsion class in the Gabriel filtration is:
$$\G_1 = \chertor{\G_0 \cup \{ X \in \A_\Ucal \mid \Qbf_0(X) \text{ is simple in } \A_\Ucal/\G_0 \}} $$
where $\Qbf_0 \colon \A_\Ucal \to \A_\Ucal/\G_0$ is the quotient functor. By Lemma  \ref{lemma:cocriticalsimple}, the simple objects in $\A/\G_0$ are precisely the cocritical objects with respect to the torsion pair $\tp_0 = (\G_0,\rcirc{\G_0})$, ie. the $0$-cocritical objects. Consider the torsion pair $(\Tcal_{\bar{\Ucal}},\Fcal_{\bar{\Ucal}})$ defined in the proof of Proposition \ref{GmonoformAp}, where:
\[
\Tcal_{\bar{\Ucal}} = \coprod_{x \in \bar{\Ucal}}\Tcal(x)[1] \quad \text{and} \quad \Fcal_{\bar{\Ucal}} = \Cogen C_{\bar{\Ucal}}[1].
\]
It is clear that $\Tcal_{\bar{\Ucal}} \subseteq \G_0$, hence $G[1]$ is $0$-cocritical. Moreover, with the same argument as in Lemma \ref{lemma:onlycocritical}, we can prove that if $X \in \A_\Ucal$ is a $0$-cocritical object, then $\Qbf_0(X) \cong \Qbf_0(G[1])$. In conclusion we have:
$$\G_1 = \chertor{\G_0 \cup G[1]}. $$

\begin{thm}
	If $\A_\Ucal = \G(\Qcal_\Ucal, \C_\Ucal)$, then $\Gdim\A_\Ucal = 1$.
\end{thm}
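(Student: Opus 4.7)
The plan is to mirror the proof of Theorem \ref{thm:gdimA}, adding one splitting step to handle the fact that, unlike in the case $\Ucal=\varnothing$, not every Prüfer module lies in $\C_\Ucal$. I would begin by taking from \cite[Theorem 4.1]{RR} the short exact sequence $0\to\La\to G^{(\alpha)}\to M'\to 0$ in $\LMla$ with $M'=\bigoplus_{x\in\projl}(S_x^\infty)^{(\beta_x)}$, and decomposing $M'=M'_\Ucal\oplus M'_{\bar{\Ucal}}$ according to whether $x\in\Ucal$ or not. Pulling back the surjection $G^{(\alpha)}\twoheadrightarrow M'$ along the inclusion $M'_\Ucal\hookrightarrow M'$ produces two exact sequences
\[
0\to\La\to N\to M'_\Ucal\to 0 \qquad\text{and}\qquad 0\to N\to G^{(\alpha)}\to M'_{\bar{\Ucal}}\to 0
\]
in $\LMla$. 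Because $\C_\Ucal$ is closed under subobjects and coproducts, and $N$ embeds in $G^{(\alpha)}\in\C_\Ucal$, we have $N\in\C_\Ucal$; likewise $\La, G^{(\alpha)}, M'_{\bar{\Ucal}}\in\C_\Ucal$ (the last being a coproduct of Prüfers indexed by $\bar{\Ucal}$, each a summand of $C_\Ucal$), while $M'_\Ucal\in\Qcal_\Ucal$.

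Next I would observe that the second sequence lies entirely in $\C_\Ucal$ and hence induces a short exact sequence $0\to N[1]\to G^{(\alpha)}[1]\to M'_{\bar{\Ucal}}[1]\to 0$ in $\A_\Ucal$. Here $G^{(\alpha)}[1]\in\G_1$ since $G[1]\in\G_1$ and torsion classes are closed under coproducts, while $M'_{\bar{\Ucal}}[1]\in\G_0\subseteq\G_1$ since each $S_x^\infty[1]$ with $x\in\bar{\Ucal}$ is the directed union in $\A_\Ucal$ of the iterated extensions $R_x[n][1]$ of the simple $S_x[1]\in\G_0$. Closure of $\G_1$ under subobjects then forces $N[1]\in\G_1$.

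For the first sequence, I would rotate the associated distinguished triangle twice in $\derb(\G)$ to obtain $M'_\Ucal\to\La[1]\to N[1]\to M'_\Ucal[1]$. All three leading vertices lie in $\A_\Ucal$, since $M'_\Ucal\in\Qcal_\Ucal$ and $\La[1],N[1]\in\C_\Ucal[1]$, so by the standard correspondence between triangles with all vertices in a heart and short exact sequences therein, this yields $0\to M'_\Ucal\to\La[1]\to N[1]\to 0$ in $\A_\Ucal$. Since $M'_\Ucal\in\G_0$ (by the same directed-union argument applied to $S_x\in\G_0$ for $x\in\Ucal$) and $N[1]\in\G_1$, extension-closure of $\G_1$ gives $\La[1]\in\G_1$.

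To conclude, exactly as in the proof of Theorem \ref{thm:gdimA}, the family $\{Z\mid Z\subseteq(\La[1])^n,\,n\in\N\}$ sits inside $\G_1$ and generates $\A_\Ucal$ by \cite[Lemma 3.4]{CGM}, so $\G_1=\A_\Ucal$. Strictness of the inclusion $\G_0\subsetneq\G_1$ follows from $G[1]\notin\G_0$: indeed $\Hom_{\A_\Ucal}(S_x,G[1])=\Ext^1_\La(S_x,G)\cong D\Hom_\La(G,S_x)=0$ for $x\in\Ucal$ and $\Hom_{\A_\Ucal}(S_x[1],G[1])=\Hom_\La(S_x,G)=0$ for $x\in\bar{\Ucal}$, so $G[1]$ has no nonzero subobject coming from $\G_0$. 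The most delicate step is the identification of the rotated triangle with a short exact sequence in the heart, which hinges on verifying that all three leading vertices genuinely lie in $\A_\Ucal$; the rest of the argument is then a direct transcription of the case $\Ucal=\varnothing$.
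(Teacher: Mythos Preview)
Your proof is correct and follows essentially the same route as the paper: your direct-sum splitting $M'=M'_\Ucal\oplus M'_{\bar\Ucal}$ coincides with the paper's torsion decomposition $t(M')\oplus M'/t(M')$ with respect to $(\Qcal_\Ucal,\C_\Ucal)$, and your pullback object $N$ is precisely the paper's $Y$, after which both arguments proceed identically via the two resulting short exact sequences in $\A_\Ucal$ and \cite[Lemma~3.4]{CGM}. The only addition is your explicit verification that $G[1]\notin\G_0$, which the paper handles in the discussion preceding the theorem by noting that $G[1]$ is $0$-cocritical.
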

\begin{proof}
	As in the proof of Theorem \ref{thm:gdimA}, consider the regular $\La$-module $\La$ and the short exact sequence: $0 \to \La \to G^{(\alpha)} \to M' \to 0$, where $M'$ is a direct sum of Pr\"ufer modules. Consider the canonical sequence given by the torsion pair $(\Qcal_\Ucal,\C_\Ucal)$ for $M'$, $0 \to t(M') \to M' \to M'/t(M') \to 0$, where $t$ is the torsion radical, and denote with $Y$ the pullback of the maps $G^{(\alpha)} \to M'$ and $t(M') \to M'$. We obtain the following commutative diagram:
	\[\xymatrix{
		& & 0\ar[d] & 0\ar[d] & \\
		0 \ar[r] & \La \ar@{=}[d] \ar[r] & Y \ar[d]^{}\ar[r]^{} \pbcorner & t(M') \ar[d]^{}\ar[r]{} & 0 \\
		0\ar[r] & \La \ar[r] & G^{(\alpha)} \ar[d] \ar[r] & M' \ar[d]\ar[r]{} & 0 \\
		&&M'/t(M') \ar[d]\ar@{=}[r]&M'/t(M')\ar[d]&\\
		&&0&0&
	}
	\]
	where $t(M') \in \Qcal_\Ucal$, $M'/t(M') \in \C_\Ucal$ and $Y \in \C_\Ucal$ since $G^{(\alpha)} \in \C_\Ucal$. Moreover, since $M'$ is a direct sum of copies of Pr\"ufer modules, $t(M')$ is a direct sum of Pr\"ufer modules lying in $\coprod_{x \in \Ucal} \Tcal(x)$. So, the upper row becomes $$0 \to \La \to Y \to \bigoplus_{x \in \Ucal} {S_x^\infty}^{(\beta_x)} \to 0$$ and gives rise to a short exact sequence in $\A_\Ucal$: $$0 \to \bigoplus_{x \in \Ucal} {S_x^\infty}^{(\beta_x)} \to \La[1] \to Y[1]  \to 0$$ with $\bigoplus_{x \in \Ucal} {S_x^\infty}^{(\beta_x)} \in \G_0$. From the short exact sequence $0 \to Y \to G^{(\alpha)} \to M'/t(M') \to 0$, which is entirely in $\C_\Ucal$, we obtain a short exact sequence in $\A_\Ucal$: $$0 \to Y[1] \to G^{(\alpha)}[1] \to M'/t(M')[1] \to 0$$ showing that $Y[1] \in \G_1$. Therefore, by the extension closure property of $\G_1$, $\La[1] \in \G_1$. From \cite[Lemma 3.4]{CGM} we know that the heart $\A_\Ucal$ has a set of generators $\{Z \mid Z \subseteq (\La[1])^n, n \in \N\}$, which is, therefore, entirely in $\G_1$ and so $\G_1 = \A_\Ucal$, showing that  $\Gdim\A_\Ucal = 1$.
\end{proof}

\subsection{Case $\Ucal = \projl$}

By Theorem \ref{thm:simplesheart}, the simple objects in $\A_\Ucal$ are $G[1]$ and $S_x$, for $x \in \projl$. Therefore, setting $\G_{-1} = \{0\}$, we obtain:
$$\G_0 = \chertor{ \{ S_x\mid S_x \text{ simple regular} \}\cup\{G[1]\} }. $$
It is clear that all the objects in the ray of $S_x$, for any $x\in\projl$, are in $\G_0$, and hence all the Pr\"ufer objects $S_x^\infty \in \G_0$, for any $x\in\projl$.

\begin{thm}
	If $\A_\Ucal = \G(\Gen\tube,\Fcal)$, then $\Gdim\A_\Ucal = 0$.
\end{thm}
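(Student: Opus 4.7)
The approach mirrors the strategy of the previous theorem (Case $\varnothing \neq \Ucal \subsetneq \projl$), but the situation is simpler because every Prüfer module lands in $\G_0$ from the start. Indeed, by Theorem \ref{thm:simplesheart} all simple regulars $S_x$ ($x \in \projl$) are simple in $\A_\Ucal$, together with $G[1]$. Since $\G_0$ is a localizing (hereditary torsion) subcategory, hence closed under subobjects, quotients, extensions and coproducts (and thus under direct limits), every module $S_x^n$ in the ray starting at $S_x$ belongs to $\G_0$, and therefore so does every Prüfer module $S_x^\infty = \dirlim S_x^n$, for every $x \in \projl$. Likewise $G^{(\alpha)}[1] \in \G_0$ for any cardinal $\alpha$.

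The plan is then to place $\La[1]$ into $\G_0$ via a single short exact sequence in $\A_\Ucal$. I would invoke the Reiten--Ringel sequence (\cite[Theorem 4.1]{RR}) giving $0 \to \La \to G^{(\alpha)} \to M' \to 0$ in $\G$, where $M' = \bigoplus_{x \in \projl}(S_x^\infty)^{(\beta_x)}$ is a direct sum of Prüfer modules. In the current torsion pair $(\Gen\tube, \Fcal)$, the first two terms lie in $\Fcal$ (since $\La \in \p \subseteq \rcirc\tube$ and $G \in \rcirc\tube$), while $M' \in \Gen\tube$ is torsion. Rotating the induced triangle in $\derb(\G)$ yields
\[
M' \lra \La[1] \lra G^{(\alpha)}[1] \lra M'[1],
\]
and since all three objects lie in $\A_\Ucal$, this is a short exact sequence
\[
0 \lra M' \lra \La[1] \lra G^{(\alpha)}[1] \lra 0
\]
in the heart, with both outer terms already in $\G_0$.

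By extension closure, $\La[1] \in \G_0$. Since $\{Z \mid Z \subseteq (\La[1])^n,\; n \in \N\}$ is a family of generators for $\A_\Ucal$ (by \cite[Lemma 3.4]{CGM}), and $\G_0$ is closed under subobjects, the whole generating family lies in $\G_0$, forcing $\G_0 = \A_\Ucal$; that is, $\Gdim \A_\Ucal = 0$. The only real subtlety in the argument, compared with the case $\varnothing \neq \Ucal \subsetneq \projl$, is the verification that every Prüfer module $S_x^\infty$ is absorbed into $\G_0$ rather than $\G_1$: the previous proof needed a pullback to separate the torsion and torsionfree parts of $M'$, whereas here the assumption $\Ucal = \projl$ collapses that step, since every $S_x$ is simple in $\A_\Ucal$.
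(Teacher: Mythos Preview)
Your proof is correct and follows essentially the same approach as the paper: both use the Reiten--Ringel sequence $0 \to \La \to G^{(\alpha)} \to M' \to 0$ to produce the short exact sequence $0 \to M' \to \La[1] \to G^{(\alpha)}[1] \to 0$ in $\A_\Ucal$, observe that both outer terms already lie in $\G_0$, and conclude via the generating family of \cite[Lemma 3.4]{CGM}. Your added explanation of why $G^{(\alpha)}[1]\in\G_0$ and your remark comparing with the case $\varnothing\neq\Ucal\subsetneq\projl$ are a bit more explicit than the paper, but the argument is the same.
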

\begin{proof}
	As for the previous case, we can show that the object $\La[1]$ is in $\G_0$. Consider, as before, the short exact sequence:
	\[ 0 \to \La \to G^{(\alpha)} \to M' \to 0 \]
	where $M'$ is a direct sum of Pr\"ufer modules. The first two terms of this short exact sequence are in $\Fcal$ and $M' \in \Gen\tube$, therefore, there is a short exact sequence in the heart $\A_\Ucal$: \[0 \to M' \to \La[1] \to G^{(\alpha)}[1] \to 0.\]
	Since $M'$ is a direct sum of Pr\"ufer objects, $M' \in \G_0$, hence $\La[1] \in \G_0$. This means that the set $ \{ Z \mid Z \subseteq (\La[1])^n, n \in \N \} $ of generators of the heart given by \cite[Lemma 3.4]{CGM} is entirely in $\G_0$ showing that $\G_0 = \A_\Ucal$ and $\Gdim\A_\Ucal = 0$.
\end{proof}

We summarize all the results obtained in Sections \ref{section:aspechearts} and \ref{sec:gdimhearts} in the following Theorem.

\begin{thm}\label{thm:AspecGdimHearts}
	Let $\G= \LMla$, with $\La$ the Kronecker algebra. For any finite dimensional cosilting $\La$-module, the heart $\A$ of the t-structure arising from the corresponding cosilting torsion pair has $\Gdim(\A) = 0$ and 
	\[\Aspec(\A) \cong 
	\begin{cases}
		\{\atom{k}\}, & \text{for the cosilting $\La$-module } Q_1 \\
		\{\atom{P_1}, \atom{Q_1} \}, & \text{otherwise} 
	\end{cases}
	\]
	Consider $\Ucal \subseteq \projl$. Let $C_\Ucal$ be the related infinite dimensional cosilting module, as in Table \ref{table:cosilting}, and let $\A_\Ucal$ be the heart of the t-structure arising from the corresponding cosilting torsion pair. We have the following:
	\begin{itemize}[noitemsep]
		\item If $\Ucal \subsetneq \projl$, then $\Gdim \A_\Ucal = 1$ and
		\[ 
		\Aspec(\A_\Ucal) = \atom{G[1]} \cup \{ \atom{S_x} \mid S_x \textrm{ simple regular } \bigcup_{x\in \Ucal} \tube_x \} \cup \{ \atom{S_x[1]} \mid S_x \textrm{ simple regular } \bigcup_{x\in \bar{\Ucal}} \tube_x \}. 
		\]
		\item If $\Ucal = \projl$, then $\Gdim\A_\Ucal = 0$ and
		\[
		\Aspec(\A_\Ucal) = \atom{G[1]} \cup \{ \atom{S_x} \mid S_x \textrm{ simple regular $\La$-module}\}.
		\]
	\end{itemize} 
\end{thm}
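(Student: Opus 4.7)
The statement is a summary of all the case-by-case analyses carried out in Sections~\ref{section:aspechearts} and \ref{sec:gdimhearts}, so the plan is essentially to assemble those results according to the type of cosilting module listed in Table~\ref{table:cosilting}. I would organize the proof into three blocks mirroring the discussion in Subsection~\ref{ssec:cosiltinghearts}.

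First, for the finite dimensional cosilting $\La$-modules I would invoke the identifications obtained there: the heart of the torsion pair cogenerated by $Q_1$ is equivalent to $k\lmod$, while the hearts arising from the remaining finite dimensional cosilting modules ($P_1$, $Q_1\oplus Q_2$, $Q_{i+1}\oplus Q_i$ for $i>2$, and $P_i\oplus P_{i+1}$ for $i\geq 1$) are all equivalent to $\LMla$ via the silting-tilting reductions and \cite[Corollary 4.7]{PV}. The atom spectrum of $k\lmod$ is trivially $\{\atom{k}\}$ and, by Example~\ref{ex:aspecKronecker}, that of $\LMla$ is $\{\atom{P_1},\atom{Q_1}\}$; the Gabriel dimensions are both $0$, as they are semisimple or as noted in Example~\ref{ex:gdimKron}.

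Second, for the infinite dimensional cosilting module $C_\Ucal$ with $\Ucal\subsetneq\projl$ (including the Reiten--Ringel case $\Ucal=\varnothing$), I would quote directly the atom spectrum computation from Subsection~\ref{caseQcoh}: the simples in $\A_\Ucal$ listed in Theorem~\ref{thm:simplesheart} are monoform with pairwise non-isomorphic injective envelopes, the generic shift $G[1]$ is monoform by Proposition~\ref{GmonoformAp}, and the injection $\Aspec(\A_\Ucal)\hookrightarrow\Spec{\A_\Ucal}$ from Theorem~\ref{thm:lnAspec} together with the classification of $\Prod(C_\Ucal[1])$ in \cite[Proposition 4.4]{CS} shows it is a bijection so the list is exhaustive. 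For the Gabriel dimension I would cite Theorem~\ref{thm:gdimA} when $\Ucal=\varnothing$ and its analog proved immediately after it when $\varnothing\neq\Ucal\subsetneq\projl$, both giving $\Gdim\A_\Ucal=1$.

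Third, for $\Ucal=\projl$ I would quote Theorem~\ref{thm:simplesheart} to list the simple objects $G[1]$ and $S_x$ for $x\in\projl$, observe that they are monoform with distinct injective envelopes $G[1]$ and $S_x^{-\infty}[1]$, so their atom classes exhaust $\Aspec(\A_\Ucal)$ by the injection into $\Spec{\A_\Ucal}=\{\textrm{indecomposables in }\Prod(C_\Ucal[1])\}$. The Gabriel dimension $0$ is then the statement of the final theorem of Section~\ref{sec:gdimhearts}, proved by showing that $\La[1]$ already lies in $\G_0$ via the Reiten--Ringel sequence $0\to\La\to G^{(\alpha)}\to M'\to 0$ and the set of generators provided by \cite[Lemma 3.4]{CGM}. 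Since all the real work has already been carried out in the previous sections, no new obstacle arises: the proof is purely a matter of bookkeeping, the only delicate point being to check that every case in Table~\ref{table:cosilting} is accounted for and that the two boundary cases $\Ucal=\varnothing$ and $\Ucal=\projl$ of the infinite dimensional family are handled by the correct subsection.
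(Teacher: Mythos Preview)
Your proposal is correct and matches the paper's treatment: Theorem~\ref{thm:AspecGdimHearts} is stated there explicitly as a summary of the results obtained in Sections~\ref{section:aspechearts} and~\ref{sec:gdimhearts}, with no separate proof given. Your case-by-case recapitulation of those sections is exactly what is intended.
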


\newpage

{\small{\sc Dipartimento di Informatica - Settore di Matematica, Universit\`a degli Studi di Verona, Strada Le Grazie, 15 - Ca' Vignal I, 37134, Verona, Italy}

\emph{Email address:} rapa.alessandro@gmail.com}
\end{document}